\newif\ifisarxiv
\newcommand{\Af}[2]{{\A_{#1}(#2)}} %{{\nabla^2 {#1}(#2)^{\frac12}}} %
\def\nnz{{\mathrm{nnz}}}
\def\Qbar{{\bar{\mathbf Q}}}
\newcommand{\deff}{{d_{\textnormal{eff}}}}
\newcommand{\tdeff}{{\tilde d_{\textnormal{eff}}}}
\def\xib{\boldsymbol\xi}
\def\Sigmab{\mathbf{\Sigma}}
\def\gammat{{\tilde\gamma}}
\def\xbt{\widetilde{\x}}
\def\s {\mathbf{s}}
\def\g {\mathbf{g}}
\def\Ec{\mathcal{E}}
\def\p{\mathbf p}
\def\R{\mathbf R}
\def\H{\mathbf H}
\newcommand{\BlackBox}{\rule{1.5ex}{1.5ex}}  % end of proof
\DeclareMathOperator*{\argmin}{\mathop{\mathrm{argmin}}}
\DeclareMathOperator*{\diag}{\mathop{\mathrm{diag}}}
\def\x{\mathbf x}
\def\b{\mathbf b}
\def\v{\mathbf v}
\def\p{\mathbf{p}}
\def\zero{\mathbf 0}
\def\u{\mathbf u}
\def\X{\mathbf X}
\def\B{\mathbf B}
\def\A{\mathbf A}
\def\C{\mathbf C}
\def\U{\mathbf U}
\def\Ubbar{\bar{\mathbf U}}
\def\D{\mathbf D}
\def\Deltat{{\widetilde\Delta}}
\def\V{\mathbf V}
\def\M{\mathbf M}
\def\T{\mathbf T}
\def\S{\mathbf S}
\def\Sbt{\widetilde{\mathbf{S}}}
\def\I{\mathbf I}
\def\A{\mathbf A}
\def\Q{\mathbf Q}
\def\Qbt{\widetilde{\mathbf Q}}
\def\E{\mathbb E}
\def\R{\mathbb R} 
\def\tr{\mathrm{tr}}
\def\Var{\mathrm{Var}}
\let\origtop\top
\renewcommand\top{{\scriptscriptstyle{\origtop}}} % this makes transpose not so big
\definecolor{silver}{cmyk}{0,0,0,0.3}
\definecolor{yellow}{cmyk}{0,0,0.9,0.0}
\definecolor{reddishyellow}{cmyk}{0,0.22,1.0,0.0}
\definecolor{black}{cmyk}{0,0,0.0,1.0}
\definecolor{darkYellow}{cmyk}{0.2,0.4,1.0,0}
\definecolor{darkSilver}{cmyk}{0,0,0,0.1}
\definecolor{grey}{cmyk}{0,0,0,0.5}
\definecolor{darkgreen}{cmyk}{0.6,0,0.8,0}
\newcommand{\Green}[1]{{\color{darkgreen}  {#1}}}
\newcommand{\Blue}[1]{\color{blue}{#1}\color{black}}
\newcommand{\Brown}[1]{{\color{brown}{#1}\color{black}}}
\newenvironment{proof}{\par\noindent{\bf Proof\ }}{\hfill\BlackBox\\[2mm]}
\newtheorem{theorem}{Theorem}
\newtheorem{example}{Example}
\newtheorem{lemma}[theorem]{Lemma}
\newtheorem{condition}{Condition}
\newtheorem{proposition}[theorem]{Proposition}
\newtheorem{remark}[theorem]{Remark}
\newtheorem{corollary}[theorem]{Corollary}
\newtheorem{definition}{Definition}
\newtheorem{conjecture}[theorem]{Conjecture}
\newtheorem{claim}[theorem]{Claim}
\newtheorem{assumption}[theorem]{Assumption}
\newcommand{\real}{\mathbb{R}}
\title{Newton-LESS: Sparsification without Trade-offs \\
  for the Sketched Newton Update}
\author{
  Micha{\l} Derezi\'nski\thanks{Department of Statistics, University
    of California, Berkeley (\texttt{mderezin@berkeley.edu})}
  \and
  Jonathan Lacotte\thanks{Department of Electrical Engineering,
    Stanford University (\texttt{lacotte@stanford.edu})}
  \and
  Mert Pilanci\thanks{Department of Electrical Engineering,
    Stanford University (\texttt{pilanci@stanford.edu})}
  \and
  Michael W. Mahoney\thanks{ICSI and Department of Statistics,
  University of California, Berkeley (\texttt{mmahoney@stat.berkeley.edu})}
  }
\author{%
  \textbf{Micha{\l} Derezi\'nski}\\
\small  Department of Statistics\\
\small  University of California, Berkeley\\
\small  \texttt{mderezin@berkeley.edu}\\
  \And
  \textbf{Jonathan Lacotte}\\
  Department of Electrical Engineering\\
  Stanford University\\
  \texttt{}
  \And
  \textbf{Mert Pilanci}\\
  Department of Electrical Engineering \\
  Stanford University \\
  \And
  \textbf{Michael W. Mahoney}\\
  ICSI and Department of Statistics \\
  University of California, Berkeley
}
\begin{document}

\maketitle

\begin{abstract}
In second-order optimization, a potential bottleneck can be computing the
Hessian matrix of the optimized function at every iteration. Randomized
sketching has emerged as a powerful technique for constructing
estimates of the Hessian which can be used to perform approximate
Newton steps. This involves multiplication by a
random sketching matrix, which introduces a trade-off between the
computational cost of sketching and the convergence rate of the
optimization algorithm. A theoretically desirable but practically much too
expensive choice is to use a dense Gaussian sketching matrix, which
produces unbiased estimates of the exact Newton step and which offers strong
problem-independent convergence guarantees. We show that the Gaussian
sketching matrix can be drastically sparsified, significantly reducing the
computational cost of sketching, without substantially affecting its convergence
properties. This approach, called Newton-LESS, is based on a recently introduced
sketching technique: LEverage Score Sparsified (LESS)
embeddings. We prove that Newton-LESS enjoys nearly the same
problem-independent local convergence rate as Gaussian embeddings, not
just up to constant factors  but even down to lower order terms, for
a large class of optimization tasks. In 
particular, this leads to a new state-of-the-art convergence result for
an iterative least squares solver. Finally,
we extend LESS embeddings to include uniformly sparsified random sign matrices
which can be implemented efficiently and which perform well in numerical experiments. 
\end{abstract}

\section{Introduction}

Consider the task of minimizing a twice-differentiable convex function
$f:\R^d\rightarrow \R$:
\vspace{-.5mm}
\begin{align*}
  \text{find}\quad \x^*=\argmin_{\x\in\R^d} f(\x).
\end{align*}
%\vspace{-4mm}
%
One of the most classical iterative algorithms for solving this task is the
Newton's method, which takes steps of the form $\x_{t+1}=\x_t-\mu_t\nabla^2 f(\x_t)^{-1}\nabla
f(\x_t)$, and which leverages second-order information in the $d\times
d$ Hessian matrix $\nabla^2 f(\x_t)$ to achieve rapid
convergence, especially locally as it approaches the optimum
$\x^*$. However, in many settings, the cost of forming the exact Hessian is
prohibitively expensive, particularly when the function $f$ is given
as a sum of $n\gg d$ components, i.e., $f(\x)=\sum_{i=1}^nf_i(\x)$. This
commonly arises in machine learning when $f$ represents the training
loss over a dataset of $n$ elements, as well as in solving
semi-definite programs, portfolio optimization, and other tasks. In
these contexts, we can represent the Hessian via a decomposition
$\nabla^2 f(\x)=\Af{f}{\x}^{\top}\Af{f}{\x}$,
where $\Af{f}{\x}$ is a tall $n\times d$ matrix,
which can be easily formed, and the main bottleneck is the 
matrix multiplication which takes $O(nd^2)$ arithmetic operations. To
avoid this bottleneck, many randomized second-order methods have been proposed
which use a Hessian estimate in place of the exact Hessian (e.g., \cite{byrd2011use,erdogdu2015convergence,naman17,subsampled-newton-math-prog}). This
naturally leads to a trade-off between the per-iteration
cost of the method and the number of iterations needed to reach convergence. We
develop Newton-LESS, a randomized second-order method which eliminates the computational
bottleneck while minimizing the convergence trade-offs.

An important family of approximate second-order methods is known as
the Newton Sketch~\cite{pilanci2017newton}:
\begin{align}
  \xbt_{t+1} = \xbt_t - \mu_t\big(\Af{f}{\xbt_t}^\top\S_t^\top\S_t \Af{f}{\xbt_t}\big)^{-1}\nabla f(\xbt_t),\label{eq:newton-sketch}
\end{align}
where $\mu_t$ is the step size, and $\S_t$ is a random $m\times n$ sketching matrix, with $m\ll n$,
that is used to reduce $\Af{f}{\xbt_t}$ to a small 
$m\times d$ sketch $\S_t \Af{f}{\xbt_t}$. This brings the complexity
of forming the Hessian down to $O(md^2)$ time plus the cost of forming the
sketch.

Naturally, sketching methods vary in their computational cost and they
can affect the convergence rate, so the right choice of $\S_t$ depends
on the computation-convergence trade-off.
% requires a trade-off between the per-iteration cost of the method 
% and the number of iterations needed to reach an accurate solution.
On
one end of this spectrum are the so-called Sub-Sampled Newton 
methods~\cite{subsampled-newton-math-prog,XRM17_theory_TR,YXRM18_TR}, where
$\S_t$ simply selects a random sample of $m$ rows of $\Af{f}{\x}$
(e.g., a sample of data points in a training set) to 
form the sketch. Here the sketching cost is negligible, since $\S_t$
is extremely sparse, but the
convergence rate can be highly variable and problem-dependent. On the
other end, we have what we will refer to as the Gaussian Newton
Sketch, where $\S_t$ is a dense matrix with i.i.d.\ scaled Gaussian
entries (a.k.a.~a Gaussian embedding). While the $O(mnd)$
cost of performing this sketch limits its practical appeal, Gaussian
Newton Sketch has a number of unique and desirable properties \cite{lacotte2019faster}: it
enjoys strong problem-independent convergence rates; it produces unbiased
estimates of the exact Newton update (useful in distributed settings);
and it admits analytic expressions for the optimal step~size.

A natural way to interpolate between these two extremes is to
vary the sparsity $s$ of the sketching matrix $\S_t$, from $s=1$ non-zero
element per row (Sub-Sampling) to $s=n$ non-zero elements (Gaussian embedding),
with the sketching complexity $O(mds)$.%
\footnote{For a more detailed discussion of other sketching techniques that
  may not fit this taxonomy, such as the Subsampled Randomized
  Hadamard Transform and the CountSketch, see Section \ref{s:related-work}.}
Motivated by this, we ask:
\begin{quote}
  Can we sparsify the Gaussian embedding, making its sparsity closer
  to that of Sub-Sampling, without suffering \emph{any} convergence trade-offs?
\end{quote}

\begin{figure}
\begin{center}
  \begin{tikzpicture}[scale=0.75]
    \draw[white] (0,1.25) rectangle (7,1.25);
    
    \draw (0,6.2) rectangle (7,7.8);
    \draw[fill=blue!30] (0,6.2) rectangle (.1,7.8);
    \draw (4.25,7) node {\mbox{\footnotesize $1$ non-zero per row of $\S$}};
    \draw (-1.1,7) node {\mbox{\footnotesize Sampling}};
    \draw (0,4.2) rectangle (7,5.8);
    \draw[fill=blue!30] (0,4.2) rectangle (1.5,5.8);
    \draw (4.25,5) node {\mbox{\footnotesize $d$ non-zeros per row of $\S$}};
    \draw (-.8,5) node {\mbox{\footnotesize LESS}}; 
    \draw[fill=blue!30] (0,2.2) rectangle (7,3.8);
    \draw (4.25,3) node {\mbox{\footnotesize $n$ non-zeros per row of $\S$}};
    \draw (-1.1,3) node {\mbox{\footnotesize Gaussian}};
  \end{tikzpicture}\hspace{5mm}%
  \includegraphics[width=.55\textwidth]{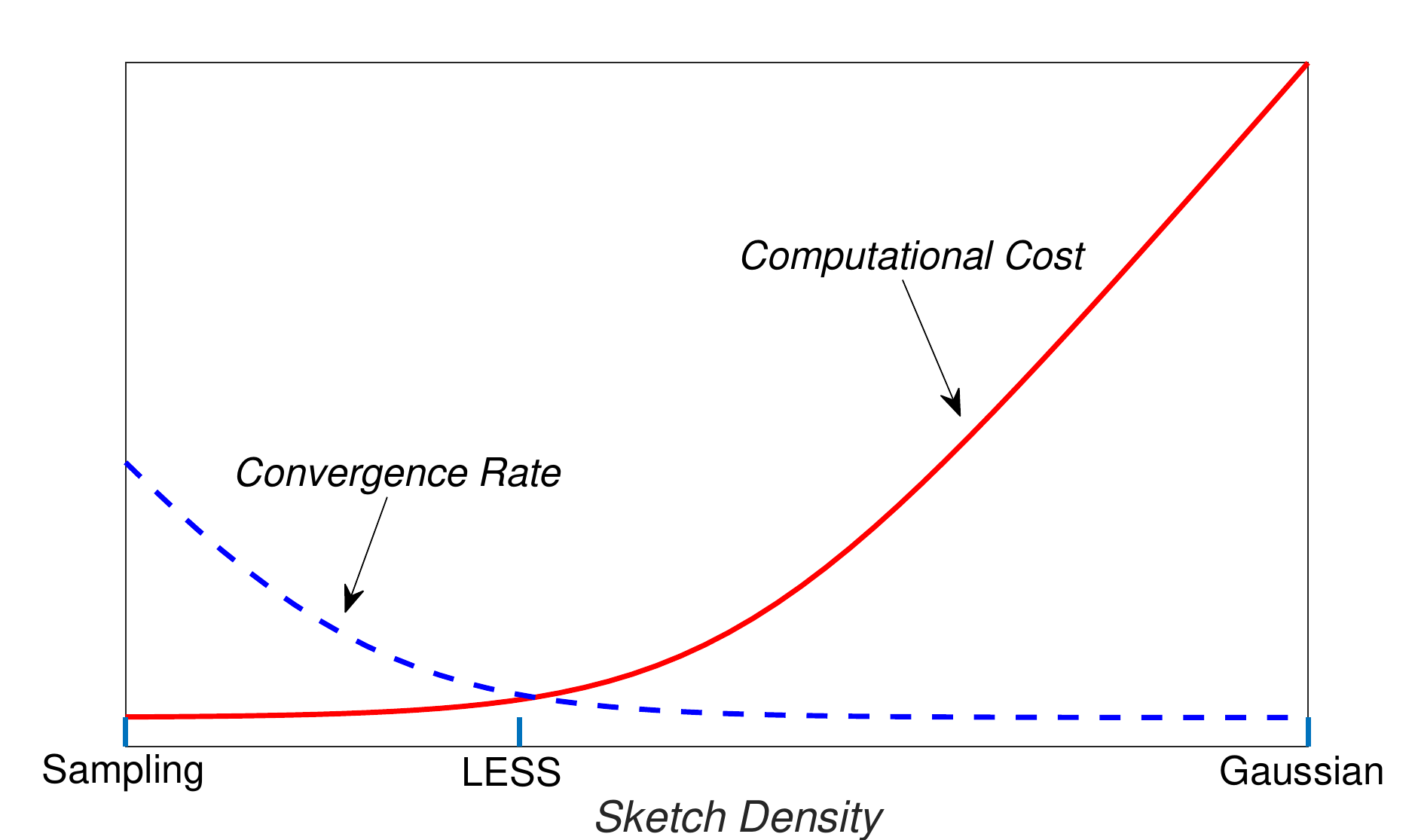}
  \captionof{figure}{The effect of the density of $m\times n$
    sketching matrix $\S$ applied to an $n\times d$ matrix $\A$
    (with $d,m\ll n$) on the convergence rate of Newton Sketch
    and the computational cost of constructing the Hessian estimate. LESS embeddings 
    ``interpolate'' between Sub-Sampled Newton methods and 
    Gaussian Newton Sketches, achieving a ``sweet spot'' in 
    the computation-per-iteration versus number-of-iterations
    tradeoff.} 
\label{fig:sparsity-intro}
\end{center}
\vspace{-3mm}
\end{figure}

In this paper, we provide an affirmative answer to this question.
We show that it is possible to drastically sparsify the Gaussian
embedding so that 
%We provide a sketching method, and we show that for this method
two key statistics of the sketches, namely first and second
inverse moments of the sketched Hessian, are nearly preserved in a
very strong sense. Namely, the two inverse moments of the sparse sketches can be upper and
lower bounded by the corresponding quantities for the dense
Gaussian embeddings, where the upper/lower bounds are matching not just up to
constant factors, but down to lower order terms (see Theorem
\ref{t:structural}). 
We use this to show that the Gaussian Newton Sketch can be  
sparsified to the point where the cost of sketching is proportional to
the cost of other operations, while nearly preserving the convergence rate
(again, down to lower order terms; see Theorem \ref{t:main-simple}). 
This is illustrated conceptually in Figure~\ref{fig:sparsity-intro}%
%(and verified later, both theoretically and empirically)
, showing how
the sparsity of the sketch affects the per-iteration convergence rate as well as the
computational cost of the Newton Sketch. We observe that while the convergence rate
improves as we increase the density, it eventually flattens out. On
the other hand, the computational cost stays largely flat until some
point when it starts increasing at a linear rate. As a result, there
is a sparsity regime where we achieve the best of 
both worlds:  the convergence rate and the computational cost are both nearly at
their optimal values, thereby avoiding any trade-off.

To establish our results, we build upon a recently introduced
sketching technique called LEverage Score Sparsified (LESS) embeddings
\cite{less-embeddings}. LESS embeddings use leverage score
techniques~\cite{fast-leverage-scores} to provide a  
carefully-constructed (random) sparsification pattern. This is used to
produce a sub-Gaussian embedding with $d$ 
non-zeros per row of $\S_t$ (as opposed to $n$ for a dense matrix), so
that the cost of forming the sketch $\S_t\Af{f}{\xbt_t}$ matches the
cost of constructing the 
Hessian estimate, i.e., $O(md^2)$ (see Section \ref{s:preliminaries}). 
\cite{less-embeddings} analyzed the first inverse moment of the sketch
to show that LESS embeddings retain certain
unbiasedness properties of Gaussian embeddings. In our setting, this
captures the bias of the Newton 
Sketch, but it does not capture the variance, which is needed to control the convergence rate.
% LESS embeddings were introduced by \cite{less-embeddings} to mitigate 
% as an efficient method that retains certain unbiasedness properties
% involving matrix inverses that are enjoyed by Gaussian
% embeddings, such as producing unbiased estimators for the exact Newton
% update. 
% \michael{Give ref there, which is best.}

\paragraph{Contributions.}
In this paper, we analyze both the bias and the variance of Newton
Sketch with LESS embeddings (Newton-LESS; see Definition
\ref{d:less} and Lemma \ref{l:structural-less}), resulting in a
comprehensive convergence analysis. The following are our key
contributions:
\begin{enumerate}
\item Characterization of the second inverse moment of the
  sketched Hessian for a class of sketches
  including sub-Gaussian matrices and LESS embeddings;
\item
Precise problem-independent local
convergence rates for Newton-LESS, matching the Gaussian Newton Sketch
down to lower order terms;
\item Extension of Newton-LESS to \emph{regularized} minimization
  tasks, with improved dimension-independent guarantees for the sketch sparsity and convergence rate;
  % We extend LESS embeddings to regularized minimization problems,
  % showing that both the sketch sparsity and convergence rate can be
  % improved in the presence of regularization.
\item \emph{Notable corollary:} Best known
 global convergence rate for an iterative least squares solver,
 which translates to state-of-the-art numerical performance. 
\end{enumerate}

\subsection{Main results}
As our main contribution, we show
that, under standard assumptions on the function $f(\x)$, Newton-LESS
achieves the same problem-independent local convergence rate as the
Gaussian Newton Sketch, despite drastically smaller per-iteration
cost.
\begin{theorem}\label{t:main-simple}
Assume that $f(\x)$ is (a) self-concordant, or
  (b) has a Lipschitz
  continuous Hessian. Also, let $\H=\nabla^2 f(\x^*)$ be
positive definite. There is a neighborhood $U$ 
  containing $\x^*$ such that if $\xbt_0\in U$, then Newton-LESS
with sketch size $m\geq Cd\log(dT/\delta)$ and step size
$\mu_t=1-\frac dm$ satisfies:
\vspace{-1mm}
  \begin{align*}
    \bigg(\E_\delta\,\frac{\|\xbt_T-\x^*\|_{\H}^2}{\|\xbt_0-\x^*\|_{\H}^2}\bigg)^{1/T}
    \approx_\epsilon\ \frac dm\qquad\text{for}\quad \epsilon =
    O\Big(\frac1{\sqrt d}\Big),
  \end{align*}
  where $\E_\delta\,X$ is expectation conditioned on an
  event that holds with a $1-\delta$ probability, $\|\v\|_{\M}=\sqrt{\v^\top\M\v}$, and $a\approx_\epsilon b$
  means that $|a-b|\leq\epsilon b$.
\end{theorem}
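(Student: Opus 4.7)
The plan is to localize the analysis to a quadratic model of $f$ near $\x^*$ and then exploit the moment structure of LESS embeddings supplied by Theorem~\ref{t:structural} to match the Gaussian rate. Write $\Deltab_t = \xbt_t - \x^*$, $\H = \nabla^2 f(\x^*)$, and $\A = \Af{f}{\x^*}$ so that $\H = \A^\top\A$. I would first rewrite the Newton-LESS step as
\begin{align*}
  \Deltab_{t+1} \;=\; (\I - \mu_t \M_t)\,\Deltab_t + \r_t,\qquad
  \M_t \defeq (\A^\top\S_t^\top\S_t\A)^{-1}\H,
\end{align*}
where $\r_t$ absorbs the discrepancies between the true gradient/Hessian of $f$ at $\xbt_t$ and their values at $\x^*$. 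Under self-concordance, or under a Lipschitz Hessian assumption, $\|\r_t\|_\H$ is bounded by $O(\|\Deltab_t\|_\H^2)$ times a well-conditioning factor of the sketched Hessian; provided $\xbt_0\in U$ is sufficiently close to $\x^*$ and the sketched Hessians are uniformly well-conditioned, the $\r_t$ contribution drops to a lower-order correction.

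Next, on the high-probability event guaranteed by Theorem~\ref{t:structural} (applied with a union bound across the $T$ iterations, which explains the $\log(dT/\delta)$ factor in the sketch size $m$), I would expand
\begin{align*}
  \E\,\|\Deltab_{t+1}\|_\H^2 = \Deltab_t^\top\H\Deltab_t - 2\mu_t\,\Deltab_t^\top\H\,\E[\M_t]\,\Deltab_t + \mu_t^2\,\Deltab_t^\top\E[\M_t^\top \H \M_t]\,\Deltab_t + (\text{lower order}),
\end{align*}
reducing the per-step analysis to the first and second inverse moments of the sketched Hessian. For Gaussian $\S_t$ these are exactly inverse-Wishart moments, and the prescribed step size $\mu_t = 1-d/m$ is chosen precisely to cancel the leading bias term, yielding a per-step contraction of order $d/m$. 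By Theorem~\ref{t:structural} and the LESS structural lemma (Lemma~\ref{l:structural-less}), the first and second inverse moments for Newton-LESS agree with the Gaussian case up to multiplicative $(1 \pm O(1/\sqrt d))$ factors, so the same $\mu_t$ produces a $(d/m)(1\pm O(1/\sqrt d))$ per-step contraction in the $\H$-norm.

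Iterating this per-step bound and applying the tower property gives $\E_\delta\,\|\Deltab_T\|_\H^2 \le (d/m)^T(1+O(1/\sqrt d))^T\,\|\Deltab_0\|_\H^2$ on the good event, and taking $T$-th roots yields the claimed $\approx_\epsilon d/m$ rate with $\epsilon = O(1/\sqrt d)$. An inductive argument -- using the contraction already proved to show that $\xbt_t$ stays in the neighborhood $U$ where the self-concordance or Lipschitz-Hessian bounds apply -- closes the loop between the quadratic reduction and the original nonlinear problem.

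The main obstacle, in my view, is the precision demanded in the second step: a qualitative moment bound (matching up to constants) would yield only the order-$d/m$ rate, but matching the Gaussian rate \emph{down to lower-order terms} requires that the moment-matching error from Theorem~\ref{t:structural}, the nonlinearity remainder $\r_t$, and the conditioning corrections from the high-probability event all fit inside a single $O(1/\sqrt d)$ multiplicative slack. Coordinating these three sources of error so that none of them dominates -- in particular, exploiting the fact that the second-moment bound for LESS is sharp enough to feed directly into an inverse-Wishart-style identity rather than a loose Cauchy--Schwarz relaxation -- is where the delicate bookkeeping lies.
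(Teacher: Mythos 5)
Your proposal follows the same route as the paper: reduce the per-step change in the $\H$-norm error to the first and second inverse moments of the normalized sketched Hessian $\Q$, invoke Theorem~\ref{t:structural} together with Lemma~\ref{l:structural-less} to show those moments match the Gaussian case to within an additive $O(\sqrt d/m)$ (hence a relative $O(1/\sqrt d)$ slack on the $d/m$ rate), choose $\mu_t=1-d/m$ to cancel the leading bias, and then chain the per-step bounds over $T$ iterations with a union bound (explaining the $\log(dT/\delta)$ sketch size) and an induction keeping iterates in $U$. The paper organizes the error accounting you correctly flag as the main difficulty slightly differently — Lemma~\ref{l:regularized-decomposition} first compares the sketched iterate to the \emph{exact damped Newton} iterate $\x_{t+1}$ in the $\H_t$-norm, and \eqref{eq:newton-decomposition} then splits the exact step into a $(1-\mu_t)^2$ linear term and a $3/2$-order superlinear residual that is bounded against $\|\Deltat_t\|_{\H_t}^2$ under either assumption — but this is a bookkeeping refinement of the same argument, not a different proof.
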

\begin{remark}
The same guarantee holds for the Gaussian Newton Sketch, but it is not
known for any fast sketching method other than Newton-LESS (see Section \ref{s:related-work}).  The
alternative assumptions of self-concordance and Lipschitz 
continuous Hessian are standard in the local convergence analysis of the
classical Newton's method, and they only affect the size of the
neighborhood $U$ (see Section~\ref{s:convergence}). Global convergence
of Newton-LESS follows from existing analysis of the Newton
Sketch \cite{pilanci2017newton}.
\end{remark}
The notion of expectation $\E_\delta$ allows us to accurately capture the
average behavior of a randomized algorithm over a moderate (i.e., polynomial in $d$) number of
trials even when the true expectation is not well behaved. Here, this
guards against the (very unlikely, but non-zero) possibility that the
Hessian estimate produced by a sparse sketch will be ill-conditioned.

To illustrate this result in a special case (of obvious independent
interest), we provide a simple corollary for the least squares
regression task, i.e., $f(\x)=\frac12\|\A\x-\b\|^2$. Importantly,
here the convergence rate of $(\frac dm)^T$ holds
\emph{globally}. Also, for this task we have
$\frac12\|\x-\x^*\|_{\H}^2=f(\x)-f(\x^*)$, so the convergence can be stated in
terms of the excess function value. To our knowledge, this is the best
known convergence guarantee for a fast iterative least squares solver.
\begin{corollary}
  Let $f(\x)=\frac12\|\A\x-\b\|^2$ for $\A\in\R^{n\times d}$ and
  $\b\in\R^n$. Then, given any $\xbt_0\in\R^d$, Newton-LESS with sketch
  size $m\geq Cd\log(dT/\delta)$ and step size $\mu_t=1-\frac dm$
  satisfies:
  \begin{align*}
    \bigg(\E_\delta\,\frac{f(\xbt_T)-f(\x^*)}{f(\xbt_0)-f(\x^*)}\bigg)^{1/T}
    \approx_\epsilon\ \frac dm\qquad\text{for}\quad \epsilon =
    O\Big(\frac1{\sqrt d}\Big).
  \end{align*}
\end{corollary}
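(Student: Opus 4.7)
The plan is to derive the corollary as a direct specialization of Theorem \ref{t:main-simple} once we verify that both of its hypotheses hold globally for the least squares objective. For $f(\x) = \tfrac12\|\A\x - \b\|^2$, the Hessian $\H = \A^\top\A$ is constant, so $f$ is trivially self-concordant and has Lipschitz continuous Hessian (with Lipschitz constant zero). A Taylor expansion of $f$ around $\x^*$ has no remainder beyond second order, yielding the exact identity $f(\xbt) - f(\x^*) = \tfrac12\|\xbt - \x^*\|_\H^2$. Consequently, the ratio of excess function values in the corollary equals the ratio of squared $\H$-norms in Theorem \ref{t:main-simple}, with the factor $\tfrac12$ cancelling from numerator and denominator.

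Next, I would argue that the local neighborhood $U$ in Theorem \ref{t:main-simple} can be extended to all of $\R^d$ in the quadratic case. Writing the Newton-LESS iteration in the error variable $\e_t = \xbt_t - \x^*$ gives
\begin{align*}
  \e_{t+1} \;=\; \bigl(\I - \mu_t\,\M_t^{-1}\H\bigr)\,\e_t,\qquad \M_t \;=\; \A^\top\S_t^\top\S_t\A,
\end{align*}
which is \emph{linear} in $\e_t$. All higher-order Taylor-remainder terms that normally restrict the neighborhood in the proof of Theorem \ref{t:main-simple} vanish, so the per-iteration contraction bound used there applies globally without modification. Combined with the structural characterization of the first and second inverse moments of $\M_t$ for LESS embeddings (Theorem \ref{t:structural}), the choice $\mu_t = 1 - d/m$ yields an expected contraction of $(d/m)\bigl(1 \pm O(1/\sqrt d)\bigr)$ per iteration in squared $\H$-norm, matching the Gaussian Newton Sketch down to lower order terms.

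The main obstacle is handling the conditioning across $T$ iterations: one must ensure that the cumulative probability of a rare ill-conditioned sketch over the $T$ independent draws of $\S_t$ remains at most $\delta$, rather than $T\delta$. This is precisely why Theorem \ref{t:main-simple} requires $m \geq Cd\log(dT/\delta)$; a union bound over the iterations, together with the per-iteration guarantee underlying Theorem \ref{t:structural}, closes the gap and justifies taking the expectation $\E_\delta$ of the product of contractions as the product of expectations. Iterating the per-step bound $T$ times, taking $T$-th roots, and substituting $f(\xbt) - f(\x^*) = \tfrac12\|\xbt - \x^*\|_\H^2$ then produces the stated global convergence rate.
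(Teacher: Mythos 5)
Your proposal is correct and follows essentially the same route the paper takes: specialize Theorem~\ref{t:main-simple} to the quadratic case, observe that the identity $f(\x)-f(\x^*)=\tfrac12\|\x-\x^*\|_\H^2$ converts the $\H$-norm bound into an excess-function-value bound, and note that the neighborhood $U$ becomes all of $\R^d$ because the Hessian is constant (so $\H_t = \H$ exactly, the Lipschitz constant vanishes, and the term $\H_t\Deltat_t - \g_t$ in the proof of Theorem~\ref{t:regularized-full} is identically zero). Your phrasing of the final chaining step is slightly loose — the paper iterates the conditional per-step bound via the tower property after a union bound over the $T$ events $\Ec_t$, rather than literally exchanging the expectation of a product with a product of expectations — but the substance (the $\log T$ in the sketch size pays for the union bound, and the per-step rate is state-independent) is right and matches Appendix~\ref{a:convergence}.
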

Prior to this work, a convergence rate of $(\frac dm)^T$ was
known only for dense Gaussian embeddings, and only for the least squares
task \cite{lacotte2019faster}. On the other hand, our results apply as generally as the standard
local convergence analysis of the Newton's method, and they include a
broad class of sketches. In Section~\ref{s:equivalence}, we provide
general structural conditions on a randomized sketching matrix that are
needed to enable our analysis. These conditions are satisfied
by a wide range of sketching methods, including all sub-Gaussian
embeddings (e.g., using random sign entries instead of Gaussians),
the original LESS embeddings, and other choices of sparse random
matrices (see Lemma~\ref{l:structural-less}). Moreover, we
develop an improved local convergence analysis of the Newton Sketch,
which allows us to recover the precise convergence rate and derive the
optimal step size. In
Appendix~\ref{a:distributed}, we also discuss a 
distributed variant of Newton-LESS, which takes advantage of the
near-unbiasedness properties of LESS embeddings, extending the results
of \cite{less-embeddings}.

The performance of Newton-LESS can be further improved for regularized
minimization tasks. Namely, suppose that function $f$ can be
decomposed as follows: $f(\x) = f_0(\x)+g(\x)$, where $g(\x)$ has a
Hessian that is easy to evaluate (e.g., $l_2$-regularization,
$g(\x)=\frac\lambda2\|\x\|^2$). In this case, a modified variant of the
Newton Sketch has been considered, where only the $f_0$ component
is sketched: 
\begin{align}
  \xbt_{t+1} = \xbt_t - \mu_t\big(\Af{f_0}{\xbt_t}^\top\S_t^\top\S_t
  \Af{f_0}{\xbt_t} + \nabla^2g(\xbt_t)\big)^{-1}\nabla
  f(\xbt_t),\label{eq:regularized-sketch} 
\end{align}
where, again, we let $\Af{f_0}{\x}$ be an $n\times d$ matrix
that encodes the second-order information in $f_0$ at $\x$. For example, in
the case of regularized least squares,
$f(\x)=\frac12\|\A\x-\b\|^2+\frac\lambda2\|\x\|^2$, we have
$\Af{f_0}{\x}=\A$ and $\nabla^2g(\x)=\lambda\I$ for all
$\x$. We show that the convergence
rate of both Newton-LESS and the Gaussian Newton Sketch can be
improved in the presence of regularization, by replacing the dimension
$d$ with an \emph{effective} dimension $\deff$.  This can be
significantly smaller than $d$ when the Hessian of $f_0$ at the optimum
exhibits rapid spectral decay or is approximately low-rank:
\begin{align*}
\deff = \tr\big(\nabla^2 f_0(\x^*)
  \,\nabla^2 f(\x^*)^{-1}\big) \leq d.
\end{align*}
\begin{theorem}\label{t:regularized-simple}
  Assume that $f_0$ and $f$ are (a) self-concordant, or
  (b) have a Lipschitz continuous Hessian. Also, let $\nabla^2
  f_0(\x^*)$ be positive definite and let $\nabla^2 g(\x^*)$ be positive semidefinite, with
  $\H=\nabla^2f(\x^*)$. There is a neighborhood $U$  
  containing $\x^*$ such that if $\xbt_0\in U$, then Regularized
  Newton-LESS \eqref{eq:regularized-sketch},
with sketch size $m\geq C\deff\log(\deff T/\delta)$ and step size
$\mu_t=1-\frac \deff m$, satisfies:
\vspace{-2mm}
  \begin{align*}
\bigg(\E_\delta\,\frac{\|\xbt_T-\x^*\|_{\H}^2}{\|\xbt_0-\x^*\|_{\H}^2}\bigg)^{1/T}
\leq\ \frac \deff m\cdot
    (1+\epsilon)\qquad\text{for}\quad\epsilon=O\Big(\frac1{\sqrt\deff}\Big).
  \end{align*}
\end{theorem}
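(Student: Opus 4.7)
The plan is to mirror the local convergence argument for Theorem \ref{t:main-simple}, but to track the geometry induced by the regularizer so that the effective dimension $\deff$ replaces $d$ throughout. First, I would invoke the self-concordance or Lipschitz-Hessian assumption (exactly as in the unregularized case) to pass from the general $f$ to a neighborhood $U$ on which $\nabla^2 f_0(\x)$ and $\nabla^2 g(\x)$ are well approximated by their values at $\x^*$, and on which $\nabla f(\xbt_t)\approx \nabla^2 f(\x^*)(\xbt_t-\x^*)$ up to a multiplicative $(1+o(1))$ factor in the $\H$-norm. This reduces the convergence analysis to a linear error recursion
\[
\xbt_{t+1}-\x^* = \bigl(\I - \mu_t\,\Qbt_t\,\H\bigr)(\xbt_t-\x^*) + r_t,
\]
where $\Qbt_t = \bigl(\Af{f_0}{\xbt_t}^\top\S_t^\top\S_t\Af{f_0}{\xbt_t}+\nabla^2 g(\xbt_t)\bigr)^{-1}$ and the residual $r_t$ contributes only to lower-order terms that are absorbed into the $O(1/\sqrt{\deff})$ slack.

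Next, I would prove the regularized analogue of the structural result (Theorem \ref{t:structural}) in terms of the ``effective'' matrix $\B=\nabla^2 f_0(\x^*)^{1/2}\H^{-1/2}$ whose squared Frobenius norm is precisely $\deff$. Setting $\A=\Af{f_0}{\x^*}$ and $\G=\nabla^2 g(\x^*)$, the sketched inverse $\H^{1/2}(\A^\top\S^\top\S\A+\G)^{-1}\H^{1/2}$ is a function of the normalized matrix $\B$, and standard sub-Gaussian concentration applied in the $\deff$-sized spectrum of $\B^\top\B$ yields first- and second-inverse-moment estimates of the form
\[
\E_\delta\bigl[\H^{1/2}\Qbt\H^{1/2}\bigr] \approx_{\epsilon} \Bigl(1-\tfrac{\deff}{m}\Bigr)^{-1}\I,\qquad \E_\delta\bigl[(\H^{1/2}\Qbt\H^{1/2})^2\bigr] \approx_{\epsilon} \Bigl(1-\tfrac{\deff}{m}\Bigr)^{-2}\bigl(\I+\tfrac{\deff}{m-\deff}\I\bigr),
\]
with $\epsilon=O(1/\sqrt{\deff})$. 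The LESS part requires using the ridge-leverage-score distribution of $\A$ regularized by $\G$ (rather than the plain leverage scores) to define the sparsification pattern; with this modification, Lemma \ref{l:structural-less} extends with the same approximation guarantee, and the sketch sparsity only needs to scale with $\deff$ rather than $d$, yielding the sketch-size condition $m\ge C\deff\log(\deff T/\delta)$.

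Given these moment bounds, I would compute the one-step $\H$-norm contraction by conditioning on $\xbt_t$ and expanding
\[
\E_\delta\,\|\xbt_{t+1}-\x^*\|_\H^2 = \bigl\langle (\xbt_t-\x^*),\ \H^{1/2}\E_\delta\bigl[(\I-\mu_t\,\Qbt\H)^\top(\I-\mu_t\,\Qbt\H)\bigr]\H^{1/2}(\xbt_t-\x^*)\bigr\rangle + \text{l.o.t.}
\]
Substituting the first- and second-moment expressions above gives a quadratic in $\mu_t$ whose minimum is attained at $\mu_t^* = 1-\deff/m$ (up to the multiplicative $(1+\epsilon)$ factor), with optimal value $\deff/m\cdot(1+\epsilon)$. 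Chaining the per-step bound over $T$ iterations and taking a union bound over the $T$ high-probability events (each with failure probability $\delta/T$) produces the claimed geometric-mean rate and the $\log(\deff T/\delta)$ factor in the sketch size.

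The hard part will be the regularized structural step: controlling the first and second inverse moments of $\A^\top\S^\top\S\A+\G$ in the operator norm induced by $\H$ with the tight $O(1/\sqrt{\deff})$ multiplicative error (rather than the looser $O(1)$ bounds that suffice for crude local-linear arguments). This demands a careful decomposition of $\B$ into its dominant and tail directions and, for LESS embeddings, a proof that a ridge-leverage-score-based sparsification preserves the same higher-moment concentration that dense Gaussians enjoy, with sparsity proportional to $\deff$ instead of $d$. Once this structural estimate is established, the remainder of the argument is a direct adaptation of the proof of Theorem \ref{t:main-simple}.
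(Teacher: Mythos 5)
Your overall plan — reuse the unregularized local‑convergence machinery, replace plain leverage scores with ridge leverage scores in the sparsifier, and drive everything through first‑ and second‑inverse‑moment estimates for the sketched Hessian — matches the paper's route, and the ridge‑leverage‑score observation is exactly right. However, there is a genuine gap in your central structural estimate. You claim
\[
\E_\delta\bigl[(\H^{1/2}\Qbt\H^{1/2})^2\bigr]\ \approx_\epsilon\ \Bigl(1-\tfrac{\deff}{m}\Bigr)^{-2}\Bigl(\I+\tfrac{\deff}{m-\deff}\I\Bigr),\qquad \epsilon=O(1/\sqrt{\deff}),
\]
i.e.\ that the second inverse moment is a scalar multiple of the identity up to a multiplicative $1+O(1/\sqrt\deff)$ error. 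This is false in the regularized case. The paper's Theorem~\ref{t:structural} shows instead that (in their normalization with $\E[\Q]\approx\I$)
\[
\E_{\Ec}[\Q^2]\ =\ \I+\tfrac{\deff}{m-\tdeff}\,\U^\top\U\ +\ O\bigl(\tfrac{\sqrt{\deff}}{m}\bigr),
\]
where $\U^\top\U=\H^{-1/2}\nabla^2 f_0(\x^*)\H^{-1/2}$ has spectrum spread over $[0,1]$ and $\tdeff=\tr((\U^\top\U)^2)\le\deff$. When $\deff/m$ is a constant and $\U^\top\U$ has small eigenvalues (the interesting regularized regime), the difference between your $\frac{\deff}{m-\deff}\I$ and the true $\frac{\deff}{m-\tdeff}\U^\top\U$ is $\Theta(\deff/m)$ in operator norm, not $O(\sqrt\deff/m)$, so no multiplicative $1+O(1/\sqrt\deff)$ equivalence holds. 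This is not a cosmetic issue: it is precisely the appearance of $\U^\top\U$ (and the ensuing replacement of the $\H$-norm by the $\nabla^2 f_0$-norm in the variance term of Lemma~\ref{l:regularized-decomposition}) that forces the regularized theorem to give only an \emph{upper} bound, in contrast to the matching two‑sided bound in Theorem~\ref{t:main-simple}. Your proposal, taken at face value, would produce a matching lower bound, which is not claimed and is not true.

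Concretely, to repair the argument you should (i) state the second‑moment estimate in its correct matrix form with $\U^\top\U$ and $\tdeff$, which requires a considerably more elaborate rank‑one (Sherman--Morrison) decomposition of $\E[\Q^2]$ than the first‑moment analysis — this is the real technical work; and (ii) when feeding it into the one‑step recursion, note that the variance term becomes $\rho\,\|\p_t\|_{\nabla^2 f_0(\xbt_t)}^2$ with $\rho=\deff/(m-\tdeff)$, which is only \emph{upper}‑bounded by $\rho\,\|\p_t\|_{\H_t}^2$; this gives the one‑sided conclusion of Theorem~\ref{t:regularized-simple}. Your remaining steps (neighborhood definition via self‑concordance or Lipschitz Hessian, optimizing $\mu_t$, union‑bounding over $T$ events) are consistent with the paper.
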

\begin{remark}
The same guarantee holds for the Gaussian Newton Sketch. Unlike in
Theorem \ref{t:main-simple}, here we can only obtain an upper-bound 
on the local convergence rate, because the exact rate may depend on the
starting point $\xbt_0$ (see Section \ref{s:convergence}). For regularized least squares,
$f(\x)=\frac12\|\A\x-\b\|^2+\frac\lambda2\|\x\|^2$, the above convergence
guarantee holds globally, i.e., $U=\R^d$. Note that $\deff$ can be
efficiently estimated using sketching-based trace estimators
\cite{avron2016sharper,cohen2015dimensionality}.
% Note that the effective dimension $\deff$ is usually unknown in
% practice, whence it needs to be estimated in order to pick a sketch
% size $m \asymp \deff$. Efficient sketching-based trace estimators have
% been recently developed for this purpose, and achieve constant factor
% approximation error in time $\mathcal{O}(nd)$ (see, for instance,
% Theorem 60 in \cite{avron2016sharper} and Lemma 18 in
% \cite{cohen2015dimensionality}). 
\end{remark}
% Remarkably, the improvement achieved by replacing dimension $d$ with
% the effective dimension $\deff$ in Regularized Newton-LESS applies not
% only to the convergence rate but also to the sparsity of the sketch:
% we show that $\deff$ non-zero entries per row of $\S_t$ suffice to
% achieve the guarantee from Theorem~\ref{t:regularized-simple}.

\vspace{-1mm}
Finally, our numerical results show that Newton-LESS can be
implemented very efficiently on modern hardware platforms, improving
on the optimization cost over not only dense Gaussian embeddings, but
also state-of-the-art sketching methods such as the Subsampled Randomized
Hadamard Transform, as well as other first-order and second-order
methods. Moreover, we demonstrate that our theoretical predictions for 
the optimal sparsity level and convergence rate are extremely accurate in
practice.

\subsection{Related work}
\label{s:related-work}
LEverage Score Sparsified (LESS) embeddings were proposed by \cite{less-embeddings} as a
way of addressing the phenomenon of inversion bias, which arises in
distributed second-order methods \cite{distributed-newton,determinantal-averaging,debiasing-second-order,gupta2021localnewton}. Their 
results only establish the \emph{near-unbiasedness} of Newton-LESS iterates
(i.e., that $\E[\xbt_{t+1}]\approx \x_{t+1}$), but they did not provide
any improved guarantees on the convergence rate. Also, their notion of LESS
embeddings is much narrower than ours, and so it does not capture
Regularized Newton-LESS or uniformly sparsified sketches (LESS-uniform).

Convergence analysis of the Newton Sketch
\cite{pilanci2017newton,lacotte2019faster,lacotte2020limiting} and
other randomized second-order methods
\cite{byrd2011use,byrd2012sample,erdogdu2015convergence,subsampled-newton-math-prog}
has been extensively studied in the machine learning
community, often using techniques from Randomized Numerical Linear
Algebra (RandNLA)~\cite{DM16_CACM,DM21_NoticesAMS}. Some of  
the popular RandNLA methods include
the Subsampled Randomized Hadamard Transform (SRHT, \cite{ailon2009fast}) and several
variants of sparse sketches, such as the CountSketch \cite{cw-sparse,mm-sparse} and OSNAP
\cite{nelson2013osnap,cohen2016nearly}. Also, row 
sampling based on Leverage Scores
\cite{drineas2006sampling,ridge-leverage-scores} and Determinantal
Point Processes \cite{dpp-intermediate, dpp-sublinear,
  DM21_NoticesAMS} has been used for sketching. 
Note that CountSketch and OSNAP sparse
sketches differ from LESS embeddings in several ways, and in
particular, they use a fixed number of non-zeros per column of the
sketching matrix (as opposed to per row), so unlike LESS, their rows
are not independent. While all of the mentioned 
methods, when used in conjunction with the Newton Sketch, exhibit
similar per-iteration complexity as LESS embeddings (see Section \ref{s:preliminaries}), 
their existing convergence analysis is fundamentally limited: The
best known rate is $(C\log d\cdot\frac dm)^T$, which is worse than our
result of $(\frac dm)^T$, by a factor of $C\log d$, where $C>1$ is a
non-negligible constant that arises in the measure
concentration analysis.

In the specific context of least squares regression where
$f(\x)=\frac{1}{2} \|\A \x - \b\|^2$, the Hessian $\A^\top \A$
remains constant, and an alternative strategy is to keep the random
sketch $\S\A$ fixed at every iteration. Many efficient randomized
iterative solvers are based on this precondition-and-solve approach
\cite{rokhlin2008fast, avron2010blendenpik,meng2014lsrn}: form the
sketch $\S \A$, compute an easy-to-invert square-root matrix $\tilde
\H^{\frac{1}{2}}$ of $\A^\top \S^\top \S \A$ and apply an iterative
least squares solver to the preconditioned objective
$\min_{\mathbf{z}} \frac{1}{2} \|\A  \tilde \H^{-\frac{1}{2}}
\mathbf{z} - \b\|^2$, e.g., Chebyshev iterations or the
preconditioned conjugate gradient. In contrast to the Newton Sketch,
these methods do not naturally extend to more generic convex
objectives for which the Hessian matrix changes at every
iteration. Also, similarly as the Newton Sketch, their convergence
guarantees are limited to $(C\log d\cdot\frac dm)^T$ when used in conjunction
with fast sketching methods such as SRHT, OSNAP, or leverage score sampling.

\section{Preliminaries}
\label{s:preliminaries}
\paragraph{Notation.} We let $\|\v\|_{\M}=\sqrt{\v^\top\M\v}$. We
define $a\approx_\epsilon b$ to mean $|a-b|\leq\epsilon b$, whereas
$a=b\pm\epsilon$ means that $|a-b|\leq\epsilon$, and $C$ denotes a
large absolute constant. We use $\E_{\Ec}$ to denote expectation
conditioned on $\Ec$, and for a $\delta\in(0,1)$, we use
$\E_{\delta}$ as a short-hand for: ``There is an event $\Ec$ with
probability at least $1-\delta$ s.t.~$\E_{\Ec}$ ...''. Let
pd and psd mean positive definite and positive
semidefinite. Random variable $X$ is sub-Gaussian if
$\Pr\{|X|\geq t\}\leq \exp(-ct^2)$ for all $t\geq0$ and some $c=\Omega(1)$.

We next introduce some concepts related to LEverage Score Sparsified
(LESS) embeddings. We start with the notion of statistical leverage
scores~\cite{fast-leverage-scores}, which are importance weights assigned to the rows of a matrix
$\A\in\R^{n\times d}$. The definition below for leverage scores is somewhat more general than standard
definitions, because it allows for a \emph{regularized} leverage
score, where the regularization depends on a $d\times d$ matrix
$\C$. When $\C$ is a scaled identity matrix, this matches the
definition of ridge leverage scores \cite{ridge-leverage-scores}.
\begin{definition}[Leverage scores]\label{d:lev}
  For given matrices $\A\in\R^{n\times d}$ and psd $\C\in\R^{d\times d}$,
 we define the $i$th \underline{leverage score} $l_i(\A,\C)$ as the squared norm
  of the $i$th row of $\U=\A\H^{-\frac12}$, where $\H=\A^\top\A+\C$ is
  assumed to be invertible. The \underline{effective
    dimension} of $\A$ (given $\C$) is defined as
  $\deff=\sum_il_i(\A,\C)=\tr(\U^\top\U)$, whereas the
  \underline{coherence} of $\A$ (given $\C$) is $\tau=\frac n\deff
  \max_i l_i(\A,\C)\in[1,\frac n\deff]$.
\end{definition}
Next, we define a class of sparsified sub-Gaussian sketching matrices
which will be used in our results. This captures LESS embeddings, as
well as other sketching matrices that are supported by the
analysis. To that end, we define what we call a \emph{sparsifier},
which is an $n$-dimensional random vector $\xib$ that specifies the sparsification
pattern for one row of the $m\times n$ sketching matrix $\S$.
\begin{definition}[LESS embeddings]\label{d:less}
Let $t_1,...,t_s$ be sampled i.i.d. from a distribution $p=(p_1,...,p_n)$. Then, the random vector
  $\xib^\top=\big(\sqrt{\!\frac{b_1}{sp_1}},...,\sqrt{\!\frac{b_n}{sp_n}}\big)$,
  where $b_i=\sum_{j=1}^s1_{[t_j=i]}$, is a $(p,s)$-sparsifier. 
  A $(p,s)$-sparsified sub-Gaussian sketch is a random matrix $\S$ that
  consists of row vectors $c\cdot(\x\circ\xib)^\top$, where $\circ$ denotes
  the entry-wise product, $\x$ has i.i.d. mean zero, unit variance and
  sub-Gaussian entries, and $c$ is some constant. For given matrices
  $\A\in\R^{n\times d}$ and psd $\C\in\R^{d\times d}$, we focus on
  two variants of Leverage Score Sparsified embeddings:
  \begin{enumerate}
    \item \underline{LESS}. We assume that $p_i\approx_{1/2}l_i(\A,\C)/\deff$,
      and let $s \approx_{1/2} \deff$. For $\C=\zero$ and
      $s=d$, we recover the LESS embeddings proposed by
      \cite{less-embeddings}.
    \item \underline{LESS-uniform}. We simply let $p_i=1/n$ (denoted
      as $p=\mathrm{unif}$). This
      avoids the preprocessing needed for approximating the
      $l_i(\A,\C)$, but we may need larger $s$ to recover the theory.
  \end{enumerate}
\end{definition}
\paragraph{Computational cost.} To implement LESS, we must first approximate (i.e., there is no need to compute exactly~\cite{fast-leverage-scores}) the leverage
scores of~$\A$. This can be done in time $O(\nnz(\A)\log n +d^3\log
d)$ by using standard RandNLA techniques
\cite{fast-leverage-scores,cw-sparse}, where $\nnz(\A)$ is the number of
non-zero entries in $\A$ and it is bounded by~$nd$. Since the prescribed sparsity for LESS
satisfies $s=O(d)$, the sketching cost is at most $O(md^2)$. Thus, the
total cost of constructing the sketched Hessian with LESS is
$O(\nnz(\A)\log n + md^2)$, which up to logarithmic factors matches other sparse
sketching methods such as leverage score sampling (when implemented with approximate leverage scores~\cite{fast-leverage-scores}), CountSketch, and
OSNAP. In comparison, using the SRHT leads to $O(nd\log m + md^2)$
complexity, since this method does not take advantage of data sparsity.
Note that, in practice, the computational trade-offs between sketching
methods are quite different, and significantly hardware-dependent (see Section \ref{s:experiments}).
In particular, the cost of approximating the leverage
scores in LESS embeddings can be entirely avoided by using LESS-uniform. Here, the total cost
of sketching is $O(mds)$, but the sparsity of the sketch that is
needed for the theory depends on $\A$ and $\C$. Yet, in
Section \ref{s:experiments}, we show empirically that this approach
works well even for $s=d$.

\section{Equivalence between LESS and Gaussian Embeddings}
\label{s:equivalence}
In this section, we derive the basic quantities that determine the
convergence properties of the Newton Sketch, namely, the first and
second moments of the normalized sketched Hessian inverse. Our key technical
contribution is a new analysis of the second moment for a wide class
of sketching matrices that includes LESS embeddings and sub-Gaussian sketches.

Consider the Newton Sketch update as in \eqref{eq:regularized-sketch}, and
let $\Deltat_{t}=\xbt_{t}-\x^*$. Denoting $\H_t=\nabla^2f(\xbt_t)$,
$\g_t=\nabla f(\xbt)$, and
using $\p_t=-\mu_t\H_t^{-1}\g_t$ to denote the exact Newton direction with
step size $\mu_t$, a simple calculation shows that:
\begin{align}
  \|\Deltat_{t+1}\|_{\H_t}^2 - \|\Deltat_t\|_{\H_t}^2 =
  2\Deltat_t^\top\H_t^{\frac12}\Qbt\H_t^{\frac12}\p_t +
  \p_t^\top\H_t^{\frac12}\Qbt^2\H_t^{\frac12}\p_t, \label{eq:newton-exact}
\end{align}
where $\Qbt =
\H_t^{\frac12}(\Af{f_0}{\xbt_t}^\top\S_t^\top\S_t\Af{f_0}{\xbt_t}+\nabla^2
g(\xbt_t))^{-1}\H_t^{\frac12}$. 
From this, we have that 
the expected decrease in the optimization error is determined by the
first two moments of the matrix $\Qbt$, i.e., $\E[\Qbt]$ and
$\E[\Qbt^2]$. In the unregularized case, i.e., $g(\x)=0$, these moments can be derived exactly for the Gaussian
embedding. For instance if we let $\S_t$ be an $m\times n$ matrix with i.i.d. standard normal
entries scaled by $\frac1{\sqrt{m-d-1}}$, then  we obtain that:
\begin{align*}
  \E[\Qbt] = \I, \qquad \E[\Qbt^2]=\tfrac{(m-1)(m-d-1)}{(m-d)(m-d-3)}\cdot\I
  \ \approx_{\epsilon}\ \tfrac m{m-d}\cdot\I, %\tfrac m{m-d-1}\,\I.
\end{align*}
for $\epsilon=O(1/d)$. This choice of scaling for the Gaussian Newton Sketch ensures that
each iterate $\xbt_{t+1}$ is an unbiased estimate of the corresponding
exact Newton update with the same step size, i.e., that
$\E[\xbt_{t+1}] = \xbt_t+\p_t$. For most other sketching techniques,
neither of the two moments is analytically tractable because of the
bias coming from matrix inversion. Moreover, if we
allow for regularization, e.g., $g(\x)=\frac\lambda2\|\x\|^2$, then
even the Gaussian embedding does not enjoy tractable formulas for the
moments of $\Qbt$. However, using ideas from asymptotic random matrix
theory, \cite{less-embeddings} showed that in the unregularized case,
the exact Gaussian formula for the first moment holds approximately
for sub-Gaussian sketches and LESS embeddings:
$\E_\delta[\Qbt]\approx_\epsilon\I$. This implies near-unbiasedness of
the unregularized Newton-LESS iterates relative to the exact Newton step, but it
is not sufficient to ensure any convergence guarantees.

In this work, we develop a general characterization of the first and
second moments of $\Qbt$ for a wide class of sketching matrices, both
in the unregularized and in regularized settings. For the sake of
generality, we will simplify the notation here, and analyze the first
and second moment of $\Q=\H^{\frac12}(\A^\top\S^\top\S\A+\C)^{-1}\H^{\frac12}$ for some
matrices $\A\in\R^{n\times d}$ and $\C\in\R^{d\times d}$ such that
$\A^\top\A+\C=\H$. In the context of Newton Sketch \eqref{eq:regularized-sketch}, these quantities
correspond to $\A=\Af{f_0}{\xbt_t}$ and
$\C=\nabla^2 g(\xbt_t)$. Also, as a shorthand, we will define the
normalized version of matrix $\A$ as $\U=\A\H^{-\frac12}$. The following
are the two structural conditions that need to be satisfied by a
sketching matrix to enable our analysis. 

The first condition is
standard in the sketching literature. Essentially, it implies that the
sketching matrix $\S$ produces a useful approximation of the Hessian
with high probability (although this guarantee is still far too coarse
by itself to obtain our results).
\begin{condition}[Property of random matrix $\S$]\label{cond1}
  Given $\U\in\R^{n\times d}$, the $m\times n$ random matrix $\S$
  satisfies $\|\U^\top\S^\top\S\U-\U^\top\U\|\leq \eta$ with probability $1-\delta$.
\end{condition}

\noindent
This property is known as the \emph{subspace embedding property}.
Subspace embeddings were first used by \cite{drineas2006sampling}, where they were used in a data-aware context to obtain relative-error approximations for $\ell_2$ regression and low-rank matrix approximation \cite{cur-decomposition}.
Subsequently, data-oblivious subspace embeddings were used by \cite{sarlos-sketching} and popularized by \cite{woodruff2014sketching}.
Both data-aware and data-oblivious subspace embeddings can be used to derive bounds for the accuracy of various algorithms \cite{DM16_CACM,RandNLA_PCMIchapter_chapter}.

For our analysis, it is important to
assume that $\S$ has i.i.d.\ row vectors $c\s_i^\top$, where $c$ is
an appropriate scaling constant. The second condition is defined as a
property of those row vectors, which makes them sufficiently similar
to Gaussian vectors. This is a relaxation of the Restricted
Bai-Silverstein condition, proposed by \cite{less-embeddings}, which 
leads to significant improvements in the sparsity guarantee
for LESS embeddings when the Newton Sketch is regularized.
\begin{condition}[Property of random vector $\s$]\label{cond2}
 Given $\U\in\R^{n\times d}$,
  the $n$-dimensional random vector $\s$ satisfies
  $\Var\!\big[\s^\top\U\B\U^\top\s\big]\leq \alpha\cdot\tr(\U\B^2\U^\top)$
  for all p.s.d.\ matrices $\B$ and some $\alpha=O(1)$.  
\end{condition}
Given these two conditions, we are ready to derive precise non-asymptotic 
analytic expressions for the first two moments of the regularized sketched 
inverse matrix, which is the main technical contribution of this work
(proof in Appendix \ref{a:structural}).
\begin{theorem}\label{t:structural}
  Fix $\A$ and assume that $\C$ is psd. Define $\H=\A^\top\A+\C$ and $\U=\A\H^{-\frac12}$. Let $\S$
  consist of $m$ i.i.d.~rows distributed as
  $\frac1{\sqrt{m-\deff}}\s^\top$, where $\E[\s\s^\top]=\I_n$ and
  $\deff =\tr(\U^\top\U)$. Also, let $\tdeff=\tr((\U^\top\U)^2)$.
  Suppose that the matrix consisting of the
  first $m/3$ rows of $\S$ scaled by $\sqrt 3$ satisfies
  Condition \ref{cond1}  w.r.t.~$\U$, for $\eta\leq1/2$ and probability
  $1-\delta/3$, where $\delta\leq 1/m^3$. Suppose also that
  $\s$ satisfies Condition~\ref{cond2} w.r.t.~$\U$. If $m\geq
  O(\deff)$, then, conditioned on 
  event $\Ec$ that holds with probability $1-\delta$, matrix
  $\Q=\H^{\frac12}(\A^\top\S^\top\S\A+\C)^{-1}\H^{\frac12}$ satisfies 
  $\|\Q-\I\| \leq O(\eta)$ and:
  \begin{align*}
    \big\|\E_{\Ec}[\Q] - \I\big\|\leq O\big(\tfrac{\sqrt{\deff}}m\big),\qquad
    \big\|\E_{\Ec}[\Q^2]
    - \big(\I + \tfrac{\deff}{m-\tdeff}\U^\top\U\big)\big\| &\leq
                                                                   O\big(\tfrac{\sqrt{\deff}}m\big).
  \end{align*}
\end{theorem}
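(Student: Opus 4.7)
My plan is to analyze $\M \defeq \U^\top\S^\top\S\U + (\I - \U^\top\U)$, for which $\Q = \M^{-1}$, via a Sherman--Morrison leave-one-out argument. Writing $\S^\top\S = \tfrac{1}{m-\deff}\sum_{i=1}^m \s_i\s_i^\top$ and setting $\M_{-i} \defeq \M - \tfrac{1}{m-\deff}\U^\top\s_i\s_i^\top\U$, the rank-one update identity gives
\begin{align*}
\M^{-1}\U^\top\s_i \;=\; \frac{\M_{-i}^{-1}\U^\top\s_i}{1 + \beta_i}, \qquad \beta_i \defeq \tfrac{1}{m-\deff}\s_i^\top\U\M_{-i}^{-1}\U^\top\s_i.
\end{align*}
I would first define the high-probability event $\Ec$ via Condition~\ref{cond1} applied to the $\sqrt 3$-scaled sub-sketch on the first $m/3$ rows of $\S$. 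The sub-sketch construction is chosen precisely so that $\Ec$ remains (approximately) independent of any single row $\s_i$, which is what permits interchanging $\E_\Ec$ with the leave-one-out splitting. On $\Ec$, the subspace-embedding bound immediately yields $\|\Q - \I\| \leq O(\eta)$, and the analogous one-row-removed version furnishes the uniform a~priori control $\|\M_{-i}^{-1}\| = O(1)$ for every $i$, used to truncate the tails of the ratios that appear below.

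For the first moment, summing the Sherman--Morrison identity over $i$ and using $\sum_i\U^\top\s_i\s_i^\top\U = (m-\deff)(\M - \I + \U^\top\U)$ gives
\begin{align*}
\I - \M^{-1} + \M^{-1}\U^\top\U \;=\; \tfrac{1}{m-\deff}\sum_{i=1}^m\tfrac{\M_{-i}^{-1}\U^\top\s_i\s_i^\top\U}{1+\beta_i}.
\end{align*}
Taking $\E_\Ec$, using independence of $\M_{-i}$ and $\s_i$ (on $\Ec$, up to a controllable error), and invoking Condition~\ref{cond2} to concentrate $\beta_i$ around $\bar\beta \defeq \tfrac{1}{m-\deff}\tr(\U\,\E_\Ec[\M^{-1}]\U^\top)$ with standard deviation $O(\sqrt{\alpha\,\tdeff})/(m-\deff)$, I obtain a matrix fixed-point equation whose self-consistent solution, bootstrapped from $\|\Q - \I\| \leq O(\eta)$, is $\E_\Ec[\M^{-1}] = \I$ up to $O(\sqrt\deff/m)$ in operator norm.

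For the second moment, applying Sherman--Morrison on both sides gives $\M^{-1}\U^\top\s_i\s_i^\top\U\M^{-1} = (1+\beta_i)^{-2}\M_{-i}^{-1}\U^\top\s_i\s_i^\top\U\M_{-i}^{-1}$, and the same summation produces
\begin{align*}
\M^{-1} - \M^{-2} + \M^{-1}\U^\top\U\M^{-1} \;=\; \tfrac{1}{m-\deff}\sum_{i=1}^m\tfrac{\M_{-i}^{-1}\U^\top\s_i\s_i^\top\U\M_{-i}^{-1}}{(1+\beta_i)^2}.
\end{align*}
Taking $\E_\Ec$ and substituting the first-moment bound, together with the auxiliary identity $\M^{-1}\U^\top\U\M^{-1} = \M^{-2} - \M^{-1}(\I - \U^\top\U)\M^{-1}$, yields a closed self-consistent equation for $\X_2 \defeq \E_\Ec[\M^{-2}]$. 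Solving it under the approximations justified by Conditions~\ref{cond1}--\ref{cond2} delivers $\X_2 = \I + \tfrac{\deff}{m - \tdeff}\U^\top\U + O(\sqrt\deff/m)$ in operator norm.

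The main obstacle is recovering the correct $m - \tdeff$ in the second-moment denominator rather than the weaker $m - \deff$ produced by the naive analysis: replacing $\beta_i$ directly by its expected value $\bar\beta \approx \deff/(m-\deff)$ inside $(1+\beta_i)^{-2}$ only yields the coarser coefficient $\deff/(m-\deff)$. Achieving the sharper $\tdeff$ shift requires a careful second-order expansion of $(1+\beta_i)^{-2}$ in which the variance bound $\Var[\beta_i] \leq \alpha\,\tr((\U\M_{-i}^{-1}\U^\top)^2)/(m-\deff)^2 \approx \alpha\,\tdeff/(m-\deff)^2$ from Condition~\ref{cond2} contributes precisely the missing correction. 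Carrying this expansion out while maintaining operator-norm bounds on all matrix-valued intermediate quantities, and absorbing the residual from conditioning on $\Ec$ (for which the hypothesis $\delta \leq 1/m^3$ makes the truncation error negligible), is where the technical work concentrates; the remainder of the argument reduces to Hanson--Wright-style concentration, which is exactly what Condition~\ref{cond2} supplies.
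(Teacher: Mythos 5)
Your framework---splitting the rows into three groups of $m/3$ to manufacture an event $\Ec$ that is (group-wise) independent of each individual $\s_i$, the Sherman--Morrison leave-one-out algebra, and bootstrapping from the coarse bound $\|\Q-\I\|=O(\eta)$---agrees with the paper, and your first-moment argument is essentially theirs. For the second moment, however, you apply Sherman--Morrison \emph{symmetrically} (to both inverses sandwiching $\U^\top\s_i\s_i^\top\U$, producing $(1+\beta_i)^{-2}\M_{-i}^{-1}\U^\top\s_i\s_i^\top\U\M_{-i}^{-1}$), whereas the paper expands $\E_{\Ec}[\Q(\Q-\I)]$ \emph{asymmetrically}, applying Sherman--Morrison twice so that the quadratic form $\x_i^\top\Q_{-i}^{2}\x_i$ appears explicitly. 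That choice is not cosmetic: it is what lets the self-consistency be closed in one step.

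The concrete gap is your account of where the $m-\tdeff$ denominator comes from. You attribute it to $\Var[\beta_i]$ entering through a second-order Taylor expansion of $(1+\beta_i)^{-2}$. But $\Var[\beta_i]=O\big(\alpha\,\tdeff/(m-\deff)^2\big)$ contributes a correction of order $\tdeff/m^2$ to the coefficient of $\U^\top\U$, whereas the required shift $\rho-\deff/m=\deff\tdeff/\big(m(m-\tdeff)\big)$ is of order $\deff\tdeff/m^2$---a factor of $\deff$ larger. In the allowed regime $m\asymp\deff$ the former is $\Theta(1/\deff)$ while the latter is $\Theta(1)$, so the mechanism you propose is off by exactly a factor of $\deff$. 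The $\tdeff$ in fact arises from a \emph{first-order} self-consistency: $\E[\x_i^\top\Q_{-i}^{2}\x_i]\approx\tr\!\big(\E[\Q_{-i}^{2}]\,\U^\top\U\big)\approx\tr\!\big((\I+\rho\,\U^\top\U)\U^\top\U\big)=\deff+\rho\,\tdeff$, and $\rho=\deff/(m-\tdeff)$ is the unique value for which $(\deff+\rho\tdeff)/m-\rho=0$. Your symmetric $\beta_i$ only involves $\x_i^\top\M_{-i}^{-1}\x_i$, i.e.\ the \emph{first} power of $\M_{-i}^{-1}$, so $\tr(\E[\M^{-2}]\U^\top\U)$ never appears directly in your fixed-point equation; it would have to be extracted instead from $\E[\M^{-1}\U^\top\U\,\M^{-1}]$, which your sketch leaves implicit, and whose leading error term $\E\big[(\M^{-1}-\I)\U^\top\U(\M^{-1}-\I)\big]$ is itself $\Theta(\rho)$ in norm and therefore not negligible. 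Condition~\ref{cond2} is genuinely needed, but to control the \emph{fluctuations} around that fixed point---e.g.\ $\E\big[(\tr(\Q_{-i}^{2}\U^\top\U)-\x_i^\top\Q_{-i}^{2}\x_i)^2\big]$ and $\Var\big[\tr(\Q_{-i}^{2}\U^\top\U)\big]$---not to generate the $\tdeff$ coefficient.
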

Theorem \ref{t:structural} shows that, for a wide class of sketching
matrices, we can approximately write $\E[\Q]\approx \I$ and
$\E[\Q^2]\approx \I+\frac{\deff}{m-\tdeff}\U^\top\U$, with the error
term scaling as $O(\frac{\sqrt{\deff}}{m})$. In the
case of the first moment, this is a relatively straightforward
generalization of the unregularized formula for the Gaussian case.
However, for the second moment this expression is considerably more 
complicated, including not one but two notions of effective dimension,
$\tdeff\neq\deff$. To put this in context, in the unregularized case,
i.e., $\C=\zero$, we have $\deff=\tdeff=d$ and $\U^\top\U=\I$, so we
get
$\I+\frac{\deff}{m-\tdeff}\U^\top\U=\frac{m}{m-d}\I$,
which matches the second moment for the Gaussian sketch (up to lower
order terms that get absorbed into the~error).

In the case of the first moment, the proof of Theorem
\ref{t:structural} follows along the same lines as in
\cite{less-embeddings}, using a decomposition of $\E[\Q]$ that is based on
the Sherman-Morrison rank-one update of the inverse. This approach was
originally inspired by the analysis of Stieltjes transforms that are used to establish the
limiting spectral distribution in asymptotic random
matrix theory (e.g., \cite{bai2010spectral, cuillet-book}), and
applied to sketching by \cite{precise-expressions,less-embeddings}. Our key contribution
lies in deriving the bound for the second moment, which requires
a substantially more elaborate decomposition of $\E[\Q^2]$.

In the following lemma, we establish that the assumptions of Theorem
\ref{t:structural} are satisfied not only by Gaussian,
but also sub-Gaussian and LESS embedding matrices (proof in Appendix \ref{a:structural-less}).
\begin{lemma}\label{l:structural-less}
Fix $\A$ and assume that $\C$ is psd. Let $\S$ be a sketching matrix with $m$ i.i.d. rows distributed as
$\frac1{\sqrt{m-\deff}}\s^\top$.
Then, $\S$ satisfies Conditions \ref{cond1} and \ref{cond2}
as long as one of the following holds:
\begin{enumerate}
  \item \underline{Sub-Gaussian}: $\s$ is an i.i.d. sub-Gaussian random vector and
    $m\geq C(\deff+\log(1/\delta))/\eta^2$;
    \item \underline{LESS}: $\s$ is a $(p,\deff)$-sparsified
      i.i.d.~sub-Gaussian random vector with $p_i\approx_{1/2}l_i(\A,\C)/\deff$, and
      $m\geq C\deff\log(\deff/\delta)/\eta^2$;
    \item \underline{LESS-uniform}: 
      $\s$ is a $(\mathrm{unif},\tau\deff)$-sparsified
      i.i.d.~sub-Gaussian random vector, where $\tau$ is the coherence
      of $\A$, i.e., $\frac n\deff\max_il_i(\A,\C)$, and $m\geq C\deff\log(\deff/\delta)/\eta^2$. 
    \end{enumerate}
\end{lemma}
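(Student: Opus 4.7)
The plan is to verify Conditions \ref{cond1} and \ref{cond2} separately for each of the three sketch types. Condition \ref{cond1}, the subspace embedding property, is largely a citation exercise using established sketching results, while Condition \ref{cond2}, the variance bound on quadratic forms, calls for Hanson-Wright-style analysis that in the LESS cases must also handle the randomness of the sparsifier $\xib$.

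For Condition \ref{cond1}, the sub-Gaussian case follows from standard sub-Gaussian subspace embedding bounds (matrix Bernstein combined with a net argument on the column span of $\U$), giving $m \geq C(\deff+\log(1/\delta))/\eta^2$. For LESS with $p_i \approx l_i(\A,\C)/\deff$ and sparsity $s \approx \deff$, the embedding property at $m \geq C\deff\log(\deff/\delta)/\eta^2$ was already established in \cite{less-embeddings}; the leverage-score weighting in $\xib$ compensates for the sparsity. For LESS-uniform, replacing leverage-score by uniform sampling loses a factor of $\tau = \tfrac{n}{\deff}\max_i l_i$, which is exactly restored by inflating the sparsity to $s = \tau\deff$, after which the same argument as for LESS applies.

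For Condition \ref{cond2}, the sub-Gaussian case is immediate from Hanson-Wright: $\Var[\s^\top M \s] \leq C\|M\|_F^2$ for symmetric $M$. Setting $M = \U\B\U^\top$ and noting that $\U^\top\U \preceq \I$ (since $\H = \A^\top\A + \C \succeq \A^\top\A$), I obtain
\begin{align*}
\|M\|_F^2 = \tr\bigl((\U^\top\U)\B(\U^\top\U)\B\bigr) \leq \tr\bigl((\U^\top\U)\B^2\bigr) = \tr(\U\B^2\U^\top),
\end{align*}
which gives $\alpha = O(1)$.

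The main obstacle is Condition \ref{cond2} for LESS and LESS-uniform, where $\s = \x \circ \xib$ combines a sub-Gaussian vector with a sparsifier. The plan is to apply the law of total variance conditional on $\xib$:
\begin{align*}
\Var[\s^\top M \s] = \E\bigl[\Var[\s^\top M \s \mid \xib]\bigr] + \Var\bigl[\E[\s^\top M \s \mid \xib]\bigr].
\end{align*}
Given $\xib$, the entries of $\s$ are independent sub-Gaussians with variance $\xib_i^2$, so the conditional Hanson-Wright bound is $O(\sum_{i,j} M_{ij}^2 \xib_i^2\xib_j^2)$. Since $\xib_i^2 = b_i/(sp_i)$ with $b_i$ multinomial, taking expectation gives $\E[\xib_i^2\xib_j^2] = (s-1)/s$ off-diagonal and $\E[\xib_i^4] \leq 1 + 1/(sp_i)$ on-diagonal, yielding the off-diagonal contribution $O(\|M\|_F^2)$ plus a diagonal contribution $\sum_i M_{ii}^2/(sp_i)$. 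The key estimate is Cauchy-Schwarz: $(\u_i^\top\B\u_i)^2 \leq l_i\cdot(\u_i^\top\B^2\u_i)$, where $\u_i$ is the $i$-th row of $\U$. For LESS ($p_i \propto l_i$, $s=\deff$) the factor $1/(sp_i) = 1/l_i$ cancels the $l_i$, giving $\sum_i M_{ii}^2/(sp_i) \leq \tr(\U\B^2\U^\top)$; for LESS-uniform, $1/(sp_i) = 1/(\max_j l_j)$, and the same Cauchy-Schwarz step combined with $l_i \leq \max_j l_j$ again produces $\tr(\U\B^2\U^\top)$. The second term $\Var[\sum_i M_{ii}\xib_i^2]$ is a multinomial variance bounded by the same $\sum_i M_{ii}^2/(sp_i)$ and is controlled identically. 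A minor subtlety is that the constraint $\sum_i b_i = s$ introduces negative correlations between distinct $b_i$, but these can only reduce the variance and are harmless.
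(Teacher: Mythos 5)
Your plan is correct in structure and, for Condition~\ref{cond2}, essentially identical to the paper's: the paper also applies the law of total variance conditionally on $\xib$, bounds the conditional variance by (Bai--Silverstein-style) $\tr\!\bigl((\diag(\xib)\U\B\U^\top\diag(\xib))^2\bigr)$, and uses the Cauchy--Schwarz bound $(\u_i^\top\B\u_i)^2 \le \|\u_i\|^2\,\u_i^\top\B^2\u_i$ together with $p_i = \Omega(l_i/s)$ to absorb the $1/(sp_i)$ weight. Your multinomial-count computation of $\E[\xib_i^2\xib_j^2]$ and the variance of $\sum_i M_{ii}\xib_i^2$ is an equivalent repackaging of the paper's computation with the i.i.d.\ sampled indices $t_1,\dots,t_s$; both are fine, and the negative correlation remark is unnecessary once you pass to the i.i.d.\ representation $b_i = \sum_j 1[t_j=i]$. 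Your LESS-uniform observation that $s = \tau\deff$ restores $p_i = \Omega(l_i/s)$ is exactly the observation the paper makes to extend the LESS analysis.

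The one place where the proposal would not deliver the stated bound as written is Condition~\ref{cond1} in the sub-Gaussian case. A net argument over the column span of $\U$ has a net of size $e^{O(d)}$ and yields $m \ge C(d + \log(1/\delta))/\eta^2$, not the claimed $m \ge C(\deff + \log(1/\delta))/\eta^2$ with $\deff = \tr(\U^\top\U) \le d$. When $\C \succ 0$ the effective dimension can be much smaller than $d$, and that refinement does not come for free from a union bound over a net. The paper obtains the $\deff$-dependence via the covariance-estimation bound of Koltchinskii--Mendelson (cited as Lemma~\ref{lem:cov-concentration}), whose error is controlled by the effective rank $\tr\Sigmab/\|\Sigmab\|$ rather than the ambient dimension. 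Matrix Bernstein with intrinsic dimension is another route, but it requires boundedness (hence a truncation step you haven't sketched). So you should replace ``matrix Bernstein + net argument'' by a covariance concentration result with effective-rank dependence. Similarly, for Condition~\ref{cond1} in the LESS cases you wave at \cite{less-embeddings}: the paper actually needs to extend that argument (a restricted Hanson--Wright inequality, integrated into a matrix moment bound, fed into matrix Bernstein) to accommodate regularization and the broader sparsifier class, so ``already established'' is a slight overstatement, though the extension is mechanical and your high-level plan is the right one.
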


\section{Convergence Analysis for Newton-LESS}
\label{s:convergence}

In this section, we demonstrate how our main technical results can be
used to provide improved convergence guarantees for Newton-LESS (and,
more broadly, any sketching methods that satisfy the conditions of
Theorem \ref{t:structural}). Here, we will focus on the more general
regularized setting \eqref{eq:regularized-sketch}, where we can only show an upper bound on the
convergence rate (Theorem \ref{t:regularized-simple}). The
unregularized result (Theorem \ref{t:main-simple}) with matching
upper/lower bounds follows similarly.
% \michael{Here we are talking about regularized.  Again, see the previous comment.  Can we do it similarly, or be consistent and clarify why the difference.}

To start, we introduce the standard assumptions on the function $f$,
which are needed to ensure strong local convergence guarantees for the
classical Newton's method \cite{boyd2004convex}.
\begin{assumption}\label{a:lipschitz}
  Function $f:\R^d\rightarrow \R$ has a Lipschitz continuous Hessian
  with constant $L$, i.e., $\|\nabla^2 f(\x)-\nabla^2 f(\x')\|\leq
L\,\|\x-\x'\|$ for all $\x,\x'\in\R^d$.
\end{assumption}
\begin{assumption}\label{a:self-concordant}
  Function $f:\R^d\rightarrow \R$ is self-concordant, i.e., for all
  $\x,\x'\in\R^d$, the function $\phi(t)=f(\x+t\x')$
  satisfies: $|\phi'''(t)|\leq 2(\phi''(t))^{3/2}$.
\end{assumption}
Only one of those two assumptions needs to be satisfied for our
analysis to go through, and the choice of the assumption only affects the size of
the neighborhood around the optimum $\x^*$ for which our local
convergence guarantee is satisfied. To clarify this, below we give an
expanded version of Theorem~\ref{t:regularized-simple} (proof in
Appendix \ref{a:convergence}).
\begin{theorem}[Expanded Theorem \ref{t:regularized-simple}]\label{t:regularized-full}
Let $\H_0=\nabla^2 f_0(\x^*)$ be pd and $\C=\nabla^2 g(\x^*)$ be psd. Define
$\deff=\tr(\H_0\H^{-1})$ and $\tdeff=\tr((\H_0\H^{-1})^2)$ for $\H=\H_0+\C$. Assume one of the
following:
\begin{enumerate}
  \item $f_0$ and $f$ satisfy Assumption \ref{a:lipschitz}, and $U = \{\x :
    \|\x-\x^*\|_{\H}<\frac{\sqrt\deff}{m}(\lambda_{\min})^{3/2}/L\}$,
    where $\lambda_{\min}$ is the smallest eigenvalue of $\H_0$;
  \item $f_0$ and $f$ satisfy Assumption \ref{a:self-concordant}, and
    $U=\{\x:\|\x-\x^*\|_{\H}<\frac{\sqrt\deff}{m}\}$.
  \end{enumerate}
Then, Newton Sketch \eqref{eq:regularized-sketch} starting from
$\xbt_0\in U$, using any $\S_t$ from
Lemma~\ref{l:structural-less} (i.e., sub-Gaussian, LESS, or
LESS-uniform) with $\delta$ replaced by $\delta/T$, and step size 
$\mu_t=1-\frac\deff {m+\deff-\tdeff}$, satisfies:
\vspace{-1mm}
  \begin{align*}
\bigg(\E_\delta\,\frac{\|\xbt_T-\x^*\|_{\H}^2}{\|\xbt_0-\x^*\|_{\H}^2}\bigg)^{1/T}
\leq\ \frac{\deff\cdot (1+\epsilon)}{m+\deff-\tdeff} \leq\ \frac \deff
    m\cdot (1+\epsilon),\qquad\text{for}\quad\epsilon=O\Big(\frac1{\sqrt\deff}\Big).
    %\Big(1 + O\big(\tfrac1{\sqrt\deff}\big)\Big).
  \end{align*}
\end{theorem}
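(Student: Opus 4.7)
\begin{proofof}{Theorem}{\ref{t:regularized-full} (sketch)}
The plan is to turn the per-iteration identity \eqref{eq:newton-exact} into a one-step contraction by substituting the moment bounds from Theorem \ref{t:structural}, and then to chain the one-step contractions across $T$ iterations. First I would work in the $\H_t$ norm and introduce $\v_t = \H_t^{1/2}\Deltat_t$ and $\u_t = \H_t^{1/2}\p_t$, so that \eqref{eq:newton-exact} becomes $\|\Deltat_{t+1}\|_{\H_t}^2 = \|\v_t\|^2 + 2\v_t^\top \Qbt \u_t + \u_t^\top\Qbt^2\u_t$. Near $\x^*$, the standard consequences of either Assumption \ref{a:lipschitz} or Assumption \ref{a:self-concordant} (via $\nabla f(\xbt_t) = \int_0^1 \nabla^2 f(\x^*+s\Deltat_t)\,ds\cdot\Deltat_t$ and $\H_t = \H + O(\|\Deltat_t\|)$) let me write $\u_t = -\mu_t \v_t + \r_t$, where $\|\r_t\| \leq c\,\|\Deltat_t\|_\H \cdot \|\v_t\|$ with $c$ depending only on $L$ and $\lambda_{\min}$ (resp. on the self-concordance constant). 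For $\xbt_t\in U$, this remainder is $O(\sqrt{\deff}/m)\cdot\|\v_t\|$, so it can be folded into the lower-order error term.

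Next I would condition on the high-probability event $\Ec_t$ guaranteed by Theorem \ref{t:structural} at iteration $t$ (Lemma \ref{l:structural-less} verifies its hypotheses for Gaussian, sub-Gaussian, LESS, and LESS-uniform sketches), and take conditional expectation. Plugging in $\E_{\Ec_t}[\Qbt_t] = \I + \Deltab_1$ with $\|\Deltab_1\| = O(\sqrt{\deff}/m)$ and $\E_{\Ec_t}[\Qbt_t^2] = \I + \tfrac{\deff}{m-\tdeff}\U_t^\top\U_t + \Deltab_2$ with $\|\Deltab_2\| = O(\sqrt{\deff}/m)$, together with the pointwise bound $\U_t^\top\U_t \preceq \I$ (which holds because $\Af{f_0}^\top\Af{f_0}\preceq\H_t$), I obtain
\begin{align*}
\E_{\Ec_t}\!\bigl[\|\Deltat_{t+1}\|_{\H_t}^2\,\big|\,\xbt_t\bigr]
\;\leq\; \Bigl(1-2\mu_t + \mu_t^2\bigl(1+\tfrac{\deff}{m-\tdeff}\bigr)\Bigr)\,\|\v_t\|^2 \;+\; O\!\Bigl(\tfrac{\sqrt{\deff}}{m}\Bigr)\cdot\|\v_t\|^2.
\end{align*}
The scalar quadratic in $\mu_t$ is minimized at $\mu_t^* = \tfrac{m-\tdeff}{m+\deff-\tdeff} = 1-\tfrac{\deff}{m+\deff-\tdeff}$, matching the step size in the statement, and the minimum value equals $\tfrac{\deff}{m+\deff-\tdeff}$. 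Absorbing $\Deltab_1,\Deltab_2,\r_t$ and the Hessian drift $\|\H_t-\H\|$ into a $(1+\epsilon)$ factor with $\epsilon=O(1/\sqrt{\deff})$ yields the one-step contraction $\E_{\Ec_t}[\|\Deltat_{t+1}\|_{\H}^2\mid\xbt_t] \leq \rho\,\|\Deltat_t\|_{\H}^2$ with $\rho = \tfrac{\deff}{m+\deff-\tdeff}(1+\epsilon)$.

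Finally I would chain the per-step bound by setting $\Ec = \bigcap_{t=0}^{T-1}\Ec_t$, which has probability at least $1-\delta$ by a union bound since each $\Ec_t$ is invoked with failure probability $\delta/T$. On $\Ec$, the tower property of conditional expectation gives $\E_{\Ec}[\|\Deltat_T\|_{\H}^2 / \|\Deltat_0\|_{\H}^2] \leq \rho^T$, and taking the $T$-th root yields the stated bound. The main obstacle is to guarantee that the iterates remain in the local neighborhood $U$ throughout the $T$ iterations; I would handle this by an inductive argument in which the $\rho<1$ contraction above, together with the high-probability spectral control $\|\Qbt_t-\I\|\leq O(\eta)$ on $\Ec_t$, shows that $\|\Deltat_{t+1}\|_{\H}\leq \|\Deltat_t\|_{\H}$ \emph{pointwise} on $\Ec_t$ (not just in expectation), so that $\xbt_0\in U$ inductively implies $\xbt_t\in U$ for all $t\leq T$ on $\Ec$. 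The second‐order perturbation arguments needed to replace $\H_t$ by $\H$ and $\deff(\xbt_t),\tdeff(\xbt_t)$ by their values at $\x^*$ under the two alternative regularity assumptions (Lipschitz Hessian vs. self-concordance) then just shrink $U$ by the appropriate factor, producing the two cases in the statement.
\end{proofof}
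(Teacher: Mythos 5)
Your argument is correct and is essentially the paper's proof: substitute the inverse-moment bounds from Theorem~\ref{t:structural} into the per-step quadratic expansion, identify and minimize the $(1-\mu_t)^2 + \mu_t^2\,\deff/(m-\tdeff)$ coefficient to obtain the stated step size and rate, chain via the tower property under the union event, and keep iterates in $U$ by an inductive argument using the pointwise spectral control $\|\Qbt-\I\|\le O(\eta)$ on the high-probability event. The only presentational difference is that the paper factors the argument through Lemma~\ref{l:regularized-decomposition} and the exact-Newton identity \eqref{eq:newton-decomposition}, whereas you fold them into the single decomposition $\u_t=-\mu_t\v_t+\r_t$, with $\r_t=\mu_t\H_t^{-1/2}(\H_t\Deltat_t-\g_t)$ playing precisely the role of the paper's superlinear remainder.
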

Note that, compared to Theorem \ref{t:regularized-simple}, here we present a slightly sharper bound which uses both
types of effective dimension, $\tdeff\leq \deff$, that are present in
Theorem \ref{t:structural}. The statement from
Theorem~\ref{t:regularized-simple} is recovered by replacing $\tdeff$
with $\deff$ in the step size and in the bound.  The key step in the proof
of the  result is the following lemma, 
which uses Theorem \ref{t:structural} to characterize the Newton Sketch iterate
$\xbt_{t+1}$ in terms of the corresponding Newton iterate
$\x_{t+1}$. Note that this result holds globally for arbitrary
$\xbt_t$ and without the smoothness assumptions on $f$. Recall that we
let $\Deltat_t=\xbt_t-\x^*$ denote the error residual at step $t$.
\begin{lemma}\label{l:regularized-decomposition}
Fix $\H_t=\nabla^2 f(\xbt_t)$ and let $\xbt_{t+1}$ be the Newton
Sketch iterate with $\S_t$ as in Lemma~\ref{l:structural-less}. If the exact
Newton step $\x_{t+1}=\xbt_t-\mu_t\H_t^{-1}\g_t$ is a descent direction,
i.e., $\|\Delta_{t+1}\|_{\H_t}\leq\|\Deltat_t\|_{\H_t}$ where
$\Delta_{t+1}=\x_{t+1}-\x^*$, then
\begin{align*}
  \E_{\delta}\,\|\Deltat_{t+1}\|_{\H_t}^2 =
  \|\Delta_{t+1}\|_{\H_t}^2 +
  \tfrac{\deff(\xbt_t)}{m-\tdeff(\xbt_t)}\|\x_{t+1}-\xbt_t\|_{\nabla^2 f_0(\xbt_t)}^2 \pm
  O\big(\tfrac{\sqrt \deff}{m}\big)\|\Deltat_t\|_{\H_t}^2.
\end{align*}
\end{lemma}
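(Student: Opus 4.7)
The plan is to start from the exact one-step expansion \eqref{eq:newton-exact}, which writes the change in squared $\H_t$-norm of the residual as a linear form in $\Qbt$ plus a quadratic form in $\Qbt^2$, where $\Qbt=\H_t^{1/2}(\Af{f_0}{\xbt_t}^\top\S_t^\top\S_t\Af{f_0}{\xbt_t}+\nabla^2 g(\xbt_t))^{-1}\H_t^{1/2}$ and $\p_t=\x_{t+1}-\xbt_t$. Taking conditional expectation of that identity over the randomness of $\S_t$ reduces the problem to estimating $\E_\Ec[\Qbt]$ and $\E_\Ec[\Qbt^2]$. This is exactly what Theorem \ref{t:structural} delivers, with the identifications $\A=\Af{f_0}{\xbt_t}$, $\C=\nabla^2 g(\xbt_t)$, $\H=\H_t$, and $\U=\A\H_t^{-1/2}$; note that under these identifications $\tr(\U^\top\U)=\tr(\nabla^2 f_0(\xbt_t)\H_t^{-1})=\deff(\xbt_t)$ and $\tr((\U^\top\U)^2)=\tdeff(\xbt_t)$. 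Lemma \ref{l:structural-less} guarantees the hypotheses of Theorem \ref{t:structural} hold for the sketches in question, and provides the high-probability event $\Ec$ of probability at least $1-\delta$.

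The next step is to substitute the approximations $\E_\Ec[\Qbt]\approx \I$ and $\E_\Ec[\Qbt^2]\approx \I+\tfrac{\deff}{m-\tdeff}\U^\top\U$, each with spectral error $O(\sqrt{\deff}/m)$, into the expected version of \eqref{eq:newton-exact}. The identity $\H_t^{1/2}\U^\top\U\H_t^{1/2}=\A^\top\A=\nabla^2 f_0(\xbt_t)$ converts the new quadratic form into $\tfrac{\deff}{m-\tdeff}\|\p_t\|_{\nabla^2 f_0(\xbt_t)}^2=\tfrac{\deff}{m-\tdeff}\|\x_{t+1}-\xbt_t\|_{\nabla^2 f_0(\xbt_t)}^2$, which is the second term of the claim. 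Collecting the leading-order pieces yields
\[
  \|\Deltat_t\|_{\H_t}^2+2\Deltat_t^\top\H_t\p_t+\|\p_t\|_{\H_t}^2 = \|\Deltat_t+\p_t\|_{\H_t}^2 = \|\Delta_{t+1}\|_{\H_t}^2,
\]
using $\Delta_{t+1}=\Deltat_t+\p_t$, which is the first term in the claim.

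What remains is to control the error contributions. Each approximation error from Theorem \ref{t:structural} is of spectral norm $O(\sqrt{\deff}/m)$, so by Cauchy--Schwarz applied in the $\H_t^{1/2}$-transformed coordinates, the total slack is at most $O(\sqrt{\deff}/m)\bigl(\|\Deltat_t\|_{\H_t}\|\p_t\|_{\H_t}+\|\p_t\|_{\H_t}^2\bigr)$. Here is where the descent hypothesis $\|\Delta_{t+1}\|_{\H_t}\leq\|\Deltat_t\|_{\H_t}$ enters: combined with $\p_t=\Delta_{t+1}-\Deltat_t$ and the triangle inequality, it gives $\|\p_t\|_{\H_t}\leq 2\|\Deltat_t\|_{\H_t}$, so both error pieces are absorbed into $O(\sqrt{\deff}/m)\,\|\Deltat_t\|_{\H_t}^2$, yielding the stated $\pm$ term.

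I do not expect a major obstacle: the heavy lifting is all done by Theorem \ref{t:structural} and the algebraic identity $\H_t^{1/2}\U^\top\U\H_t^{1/2}=\nabla^2 f_0(\xbt_t)$. The only subtlety is that we are not allowed to assume smoothness of $f$ here (the lemma is global), so the descent assumption on the exact Newton step is essential for turning the Cauchy--Schwarz bound into a multiple of $\|\Deltat_t\|_{\H_t}^2$ rather than a mixture of Newton-step and residual norms.
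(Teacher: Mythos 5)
Your proposal is correct and takes essentially the same route as the paper's proof: both reduce to the exact quadratic identity for $\|\Deltat_{t+1}\|_{\H_t}^2$, take conditional expectation, substitute the first- and second-moment characterizations of $\Qbt$ from Theorem \ref{t:structural} (with the same identifications $\A=\Af{f_0}{\xbt_t}$, $\C=\nabla^2 g(\xbt_t)$, and $\H_t^{1/2}\U^\top\U\H_t^{1/2}=\nabla^2 f_0(\xbt_t)$), and then absorb the $O(\sqrt{\deff}/m)$ spectral errors via Cauchy--Schwarz and the descent hypothesis $\|\p_t\|_{\H_t}\le 2\|\Deltat_t\|_{\H_t}$. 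The only cosmetic difference is that you expand around $\Deltat_t$ using \eqref{eq:newton-exact} and regroup the leading terms into $\|\Delta_{t+1}\|_{\H_t}^2$, whereas the paper expands $\Deltat_{t+1}=\Delta_{t+1}+(\xbt_{t+1}-\x_{t+1})$ directly; these are algebraically equivalent rearrangements of the same argument.
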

Importantly, the second term on the right-hand side uses the norm
$\|\cdot\|_{\nabla^2 f_0(\xbt_t)}$, which is different than the norm
$\|\cdot\|_{\H_t}$ used for the remaining terms. As a result, in the
regularized setting, it is possible that the second term will be much
smaller than the last term (the approximation error). This prevents us
from obtaining a matching lower-bound for the convergence rate of
Regularized Newton-LESS. On the other hand, when $g(\x)=0$, then
$f_0=f$ and we obtain matching upper/lower~bounds.

The remainder of the proof of Theorem \ref{t:regularized-full} is
essentially a variant of the local convergence analysis of the
Newton's method. Here, note that typically, we would set the step size
to $\mu_t=1$ and we would expect superlinear (specifically, quadratic) convergence rate. However,
for Newton Sketch, convergence is a mixture of linear rate and
superlinear rate, where the linear part is due to the approximation
error in sketching the Hessian. Sufficiently close to the optimum, the
linear rate will dominate, and so this is what we focus on in our local
convergence analysis. The key novelty here is that, unlike prior work,
we strive to describe the linear rate precisely, down to lower order
terms. As a key step, we observe that the convergence of exact Newton with step
size $\mu_t<1$, letting $\Delta_{t+1}=\x_{t+1}-\x^*$, is given by:
\begin{align}
  \|\Delta_{t+1}\|_{\H_t}^2 =
  \underbrace{(1-\mu_t)^2\|\Deltat_t\|_{\H_t}^2}_{\text{linear rate}} +
  \underbrace{\mu_t(\Delta_{t+1}+(1-\mu_t)\Deltat_t)^\top(\H_t\Deltat_t-\g_t)}_{\text{superlinear
  rate}},\label{eq:newton-decomposition}
\end{align}
where recall  that $\Deltat_t=\xbt_t-\x^*$ corresponds to the previous
iterate, and $\g_t=\nabla f(\xbt_t)$. Here, $(1-\mu_t)^2$ represents
the linear convergence rate. The superlinear term vanishes  near the
optimum $\x^*$, because of the presence of $\H_t\Deltat_t-\g_t$, which
(under the smoothness assumptions on $f$) vanishes at the same rate as
$\|\Deltat_t\|_{\H_t}^2$. Interestingly, 
with this precise analysis, the superlinear term does not have a
quadratic rate, but rather a $3/2$ rate. Entering
\eqref{eq:newton-decomposition}  into the guarantee from Lemma
\ref{l:regularized-decomposition}, we obtain that the local rate of
Newton Sketch can be expressed as $(1-\mu_t)^2 +
\frac{\deff}{m-\tdeff}\mu_t^2$, and minimizing this expression over
$\mu_t$ we obtain the desired quantities from Theorem
\ref{t:regularized-full}. Finally, we note that while the norms and
effective dimensions in the above exact calculations are stated with
respect to the Hessian $\H_t$ at the current iterate $\xbt_t$, these can all be
approximated by the corresponding quantities computed using the
Hessian at the optimum $\x^*$ (as in Theorem \ref{t:regularized-full}),
relying on smoothness of~$f$.

\begin{figure}[!ht]
	\centering
	\begin{minipage}[t]{0.4\textwidth}
		\centering
		\includegraphics[width=\linewidth]{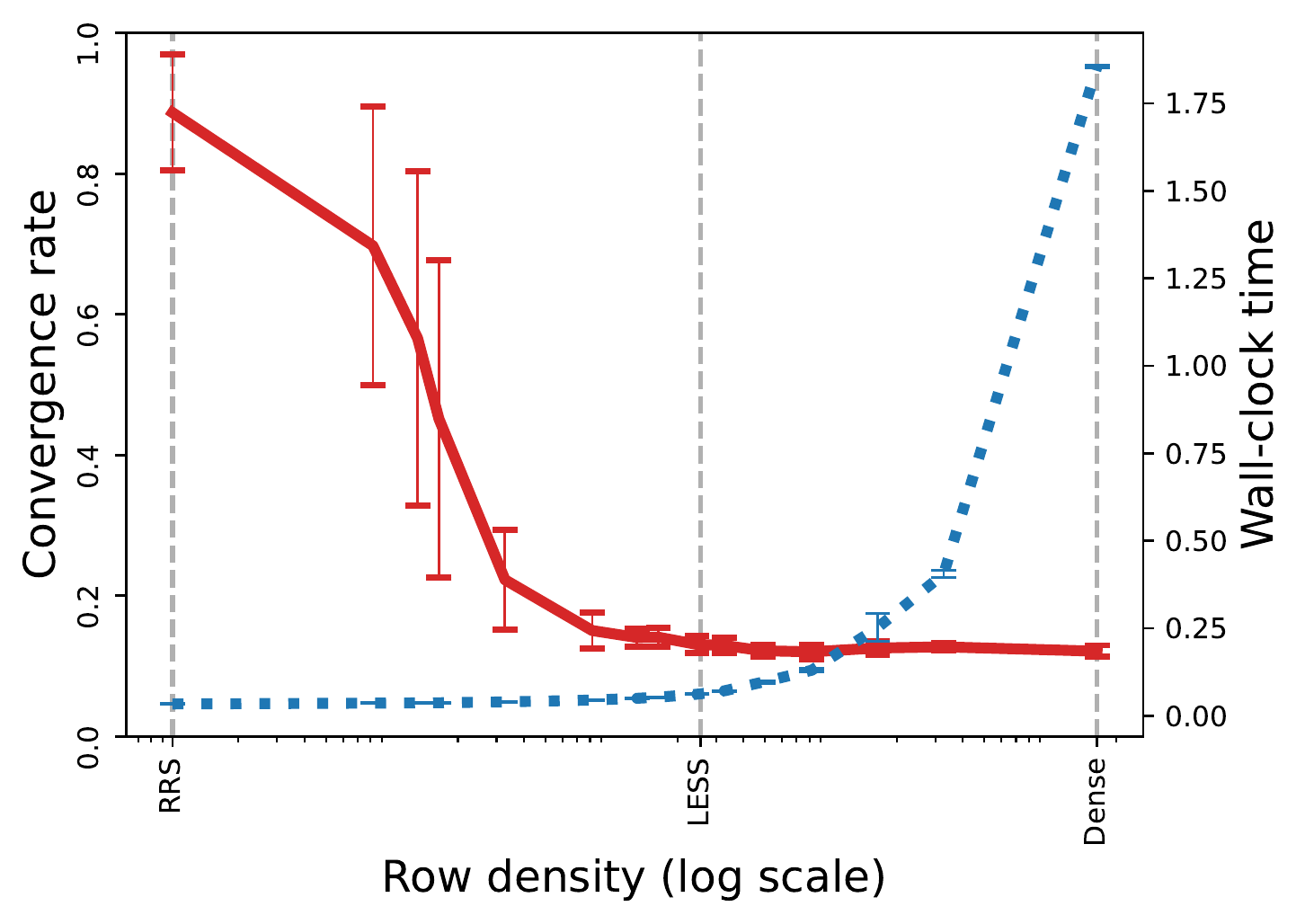}
                \vspace{-7mm}                
		\caption*{\footnotesize (a) High-coherence synthetic matrix}
	\end{minipage}
	\hspace{5mm}
	\begin{minipage}[t]{0.4\textwidth}
		\centering
		\includegraphics[width=\linewidth]{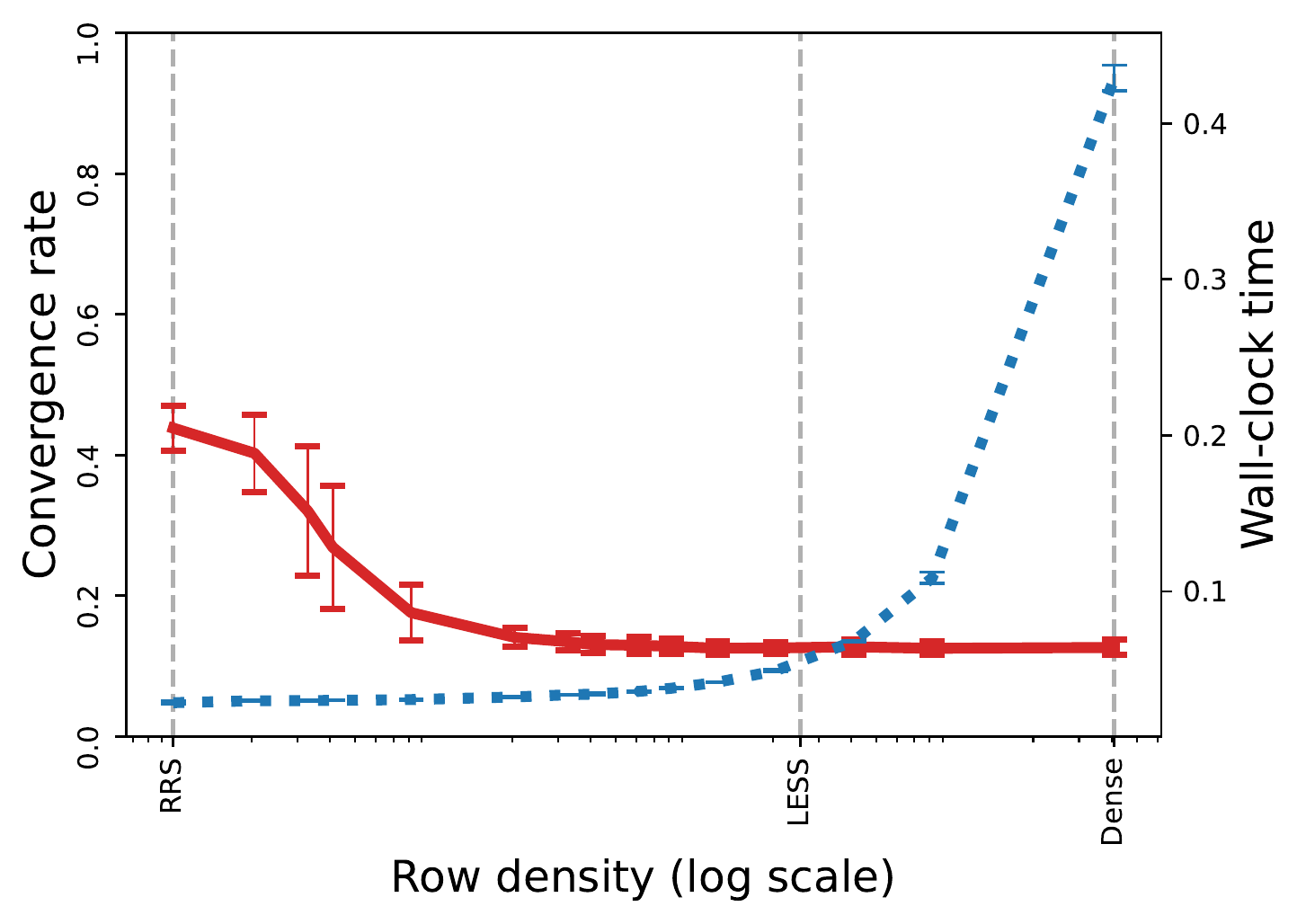}
                \vspace{-7mm}                
		\caption*{\footnotesize (b)   Musk dataset  }
	\end{minipage}
	\begin{minipage}[t]{0.4\textwidth}
          \centering
          \vspace{.3cm}
		\includegraphics[width=\linewidth]{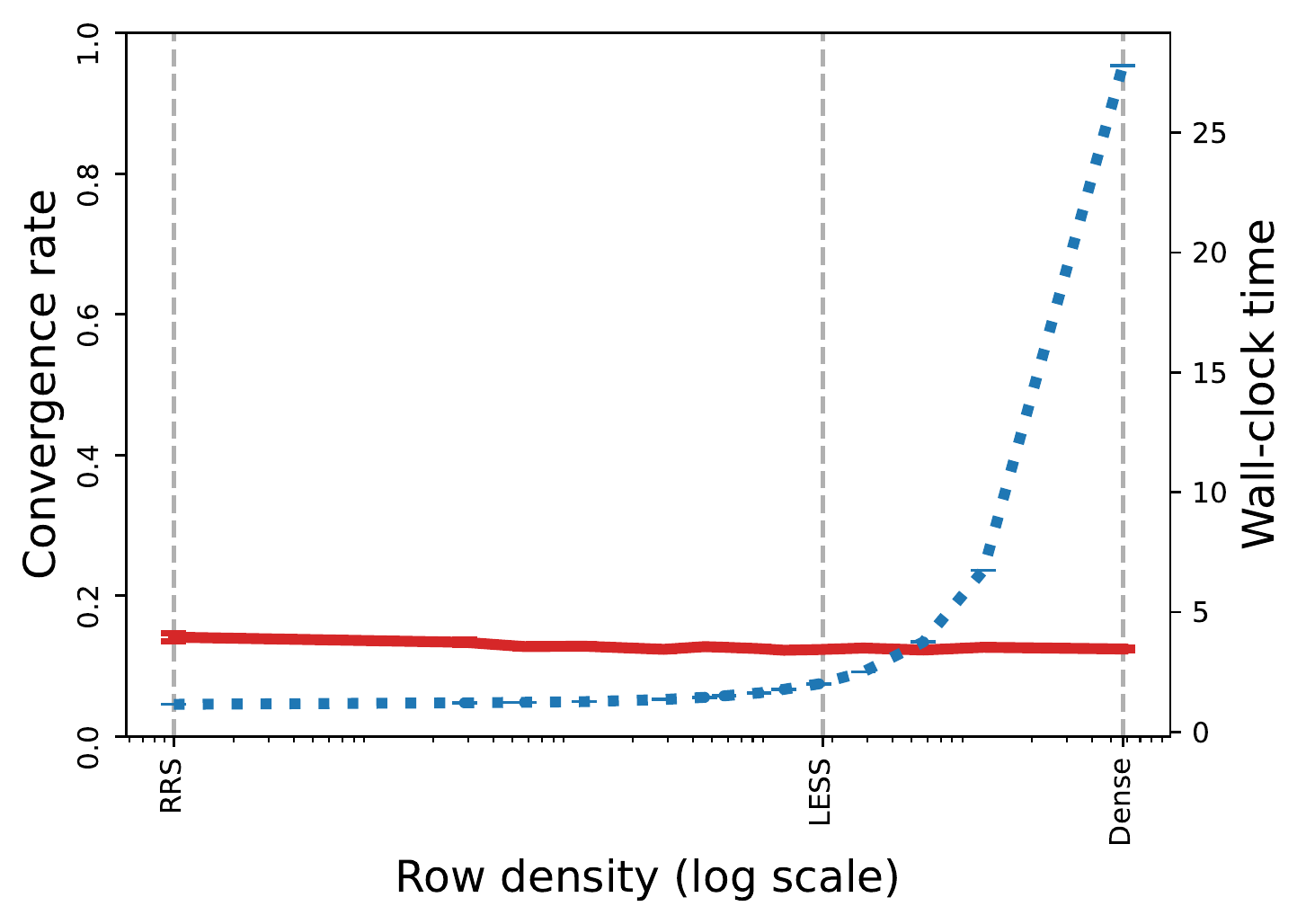}
                \vspace{-7mm}                
		\caption*{\footnotesize(c)  CIFAR-10 dataset }
	\end{minipage}
	\hspace{5mm}
	\begin{minipage}[t]{0.4\textwidth}
          \centering
          \vspace{.3cm}
          \includegraphics[width=\linewidth]{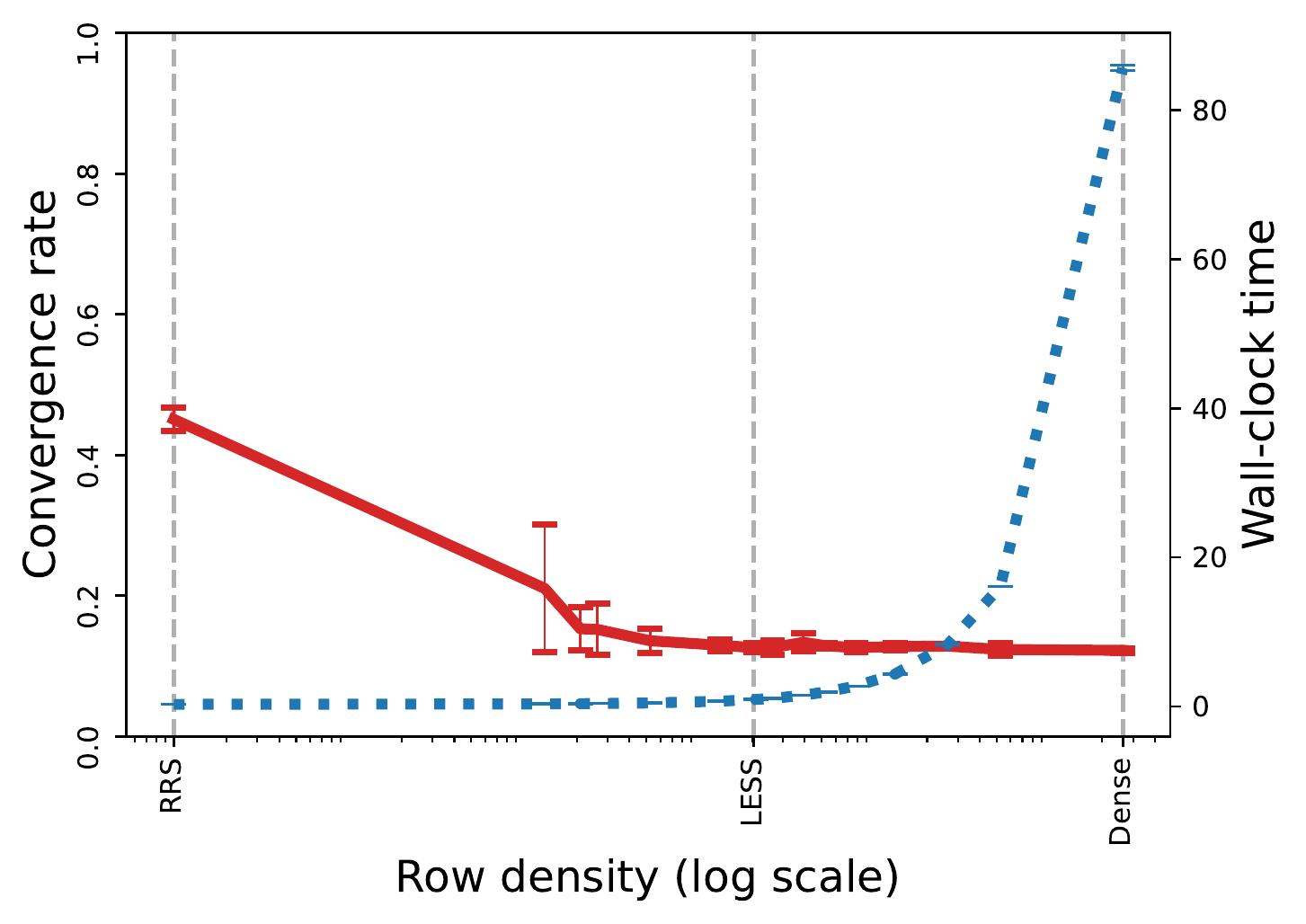}
          \vspace{-7mm}          
          \caption*{\footnotesize (d)   WESAD dataset}
	\end{minipage}
	\caption{LESS-uniform embeddings: convergence rate of the
          Newton Sketch for least squares regression and wall-clock
          time of forming $\S \A$ versus row density, with 
          sketch size $m=8d$. The results were averaged
          over $100$ trials and error bars show  twice the empirical standard deviation.}
	\label{fig:row-sparsity}
\end{figure}

\section{Numerical Experiments}
\label{s:experiments}

We have evaluated our theory on a range of different problems, and we have found that the more precise analysis that our theory provides describes well the convergence behavior for a range of optimization problems. 
In this section, we present numerical simulations illustrating this for regularized logistic regression and least squares regression, with different datasets ranging from medium to large scale: the CIFAR-10 dataset, the Musk dataset, and WESAD \cite{schmidt2018introducing}. 
Data preprocessing and implementation details, as well as additional
numerical results for least squares and regularized least squares, can
be found in Appendix \ref{a:additionalnumericalexperiments}.  

\begin{figure}%[!ht]
	\centering
\begin{minipage}[t]{0.33\textwidth}
	\centering
	\includegraphics[width=\linewidth]{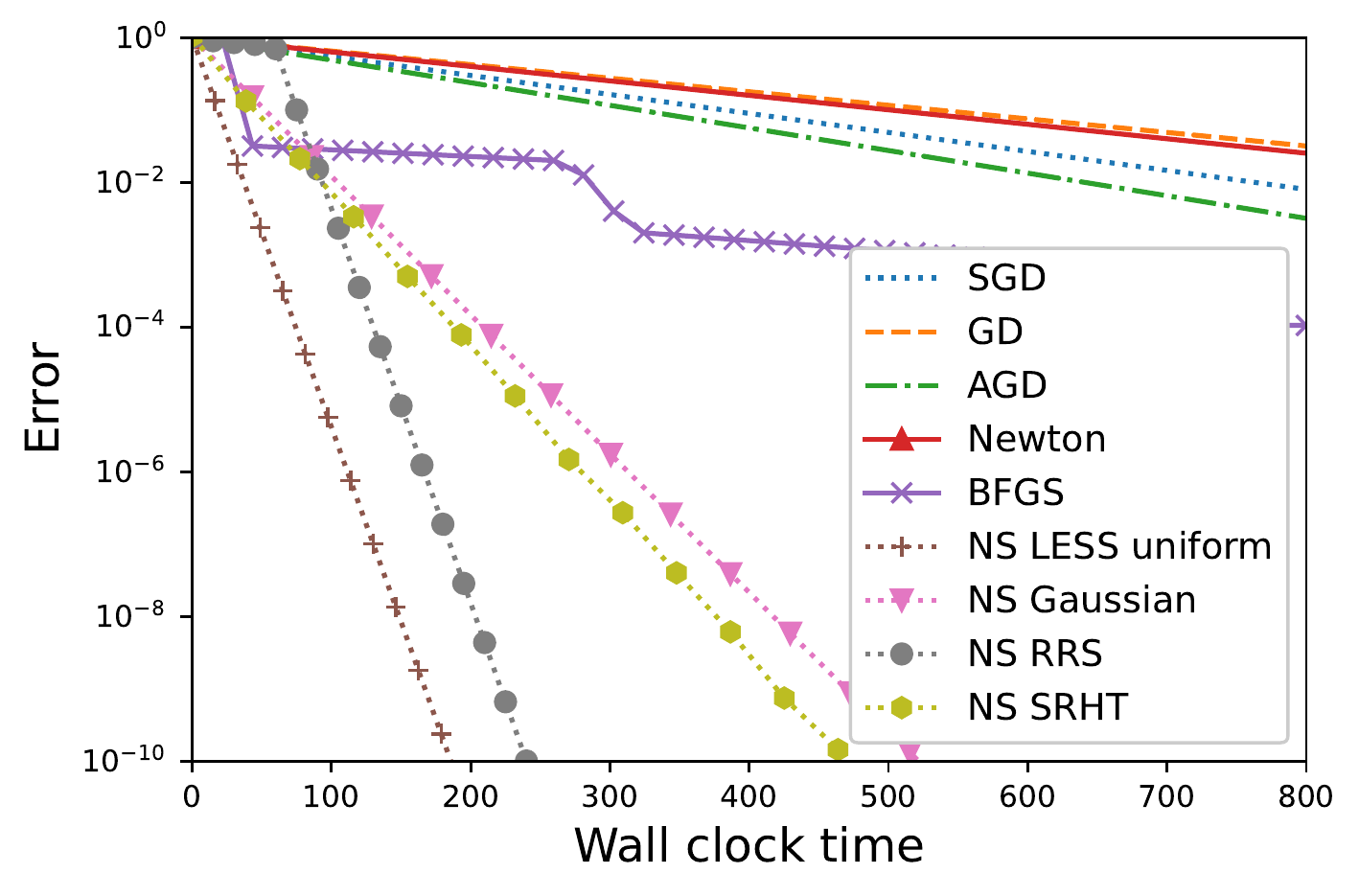} %{figs/logistic_dataset_1_time}
        \vspace{-7mm}        
	\caption*{\footnotesize{(a) WESAD dataset }}
\end{minipage}
\nolinebreak%
\begin{minipage}[t]{0.33\textwidth}
	\centering
	\includegraphics[width=\linewidth]{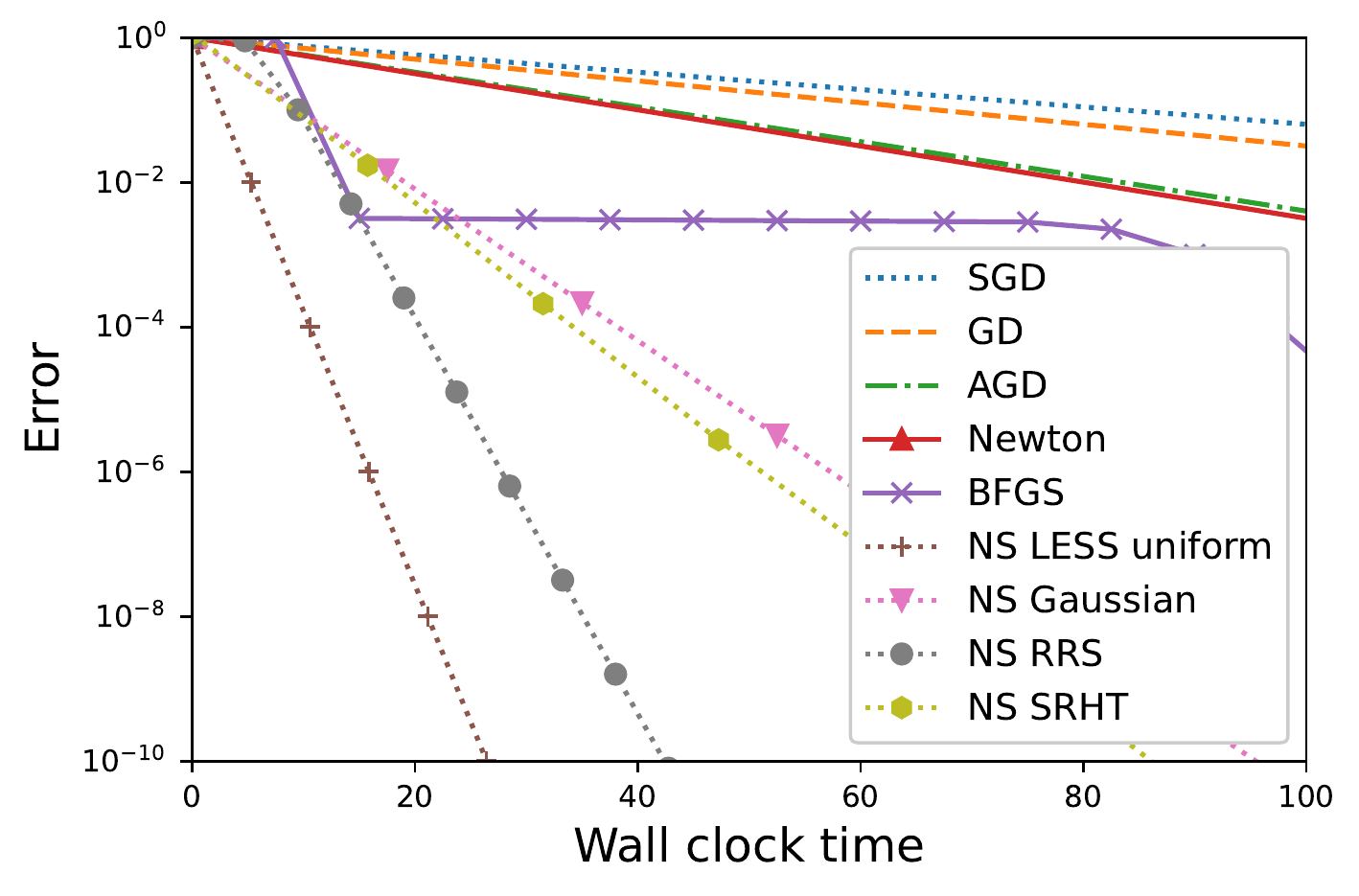} %{figs/logistic_dataset_2_time}
        \vspace{-7mm}
	\caption*{\footnotesize{(b) CIFAR-10 dataset }}
\end{minipage}
\nolinebreak%
\begin{minipage}[t]{0.33\textwidth}
	\centering
	\includegraphics[width=\linewidth]{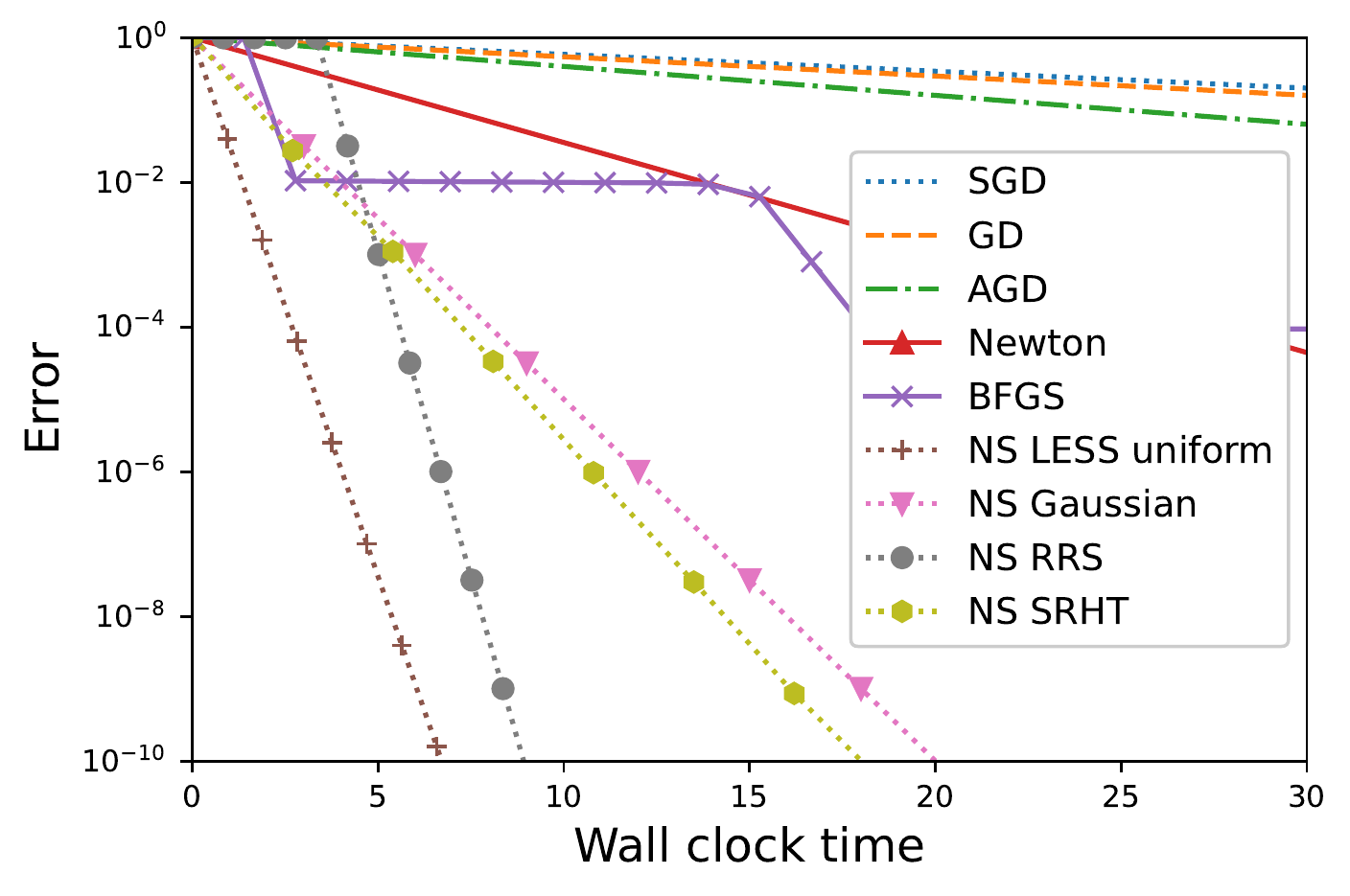} %{figs/logistic_dataset_3_time}
        \vspace{-7mm}
        \caption*{\footnotesize{(c)  Musk dataset }}
\end{minipage}
\begin{minipage}[t]{0.33\textwidth}
  \centering
	\includegraphics[width=\linewidth]{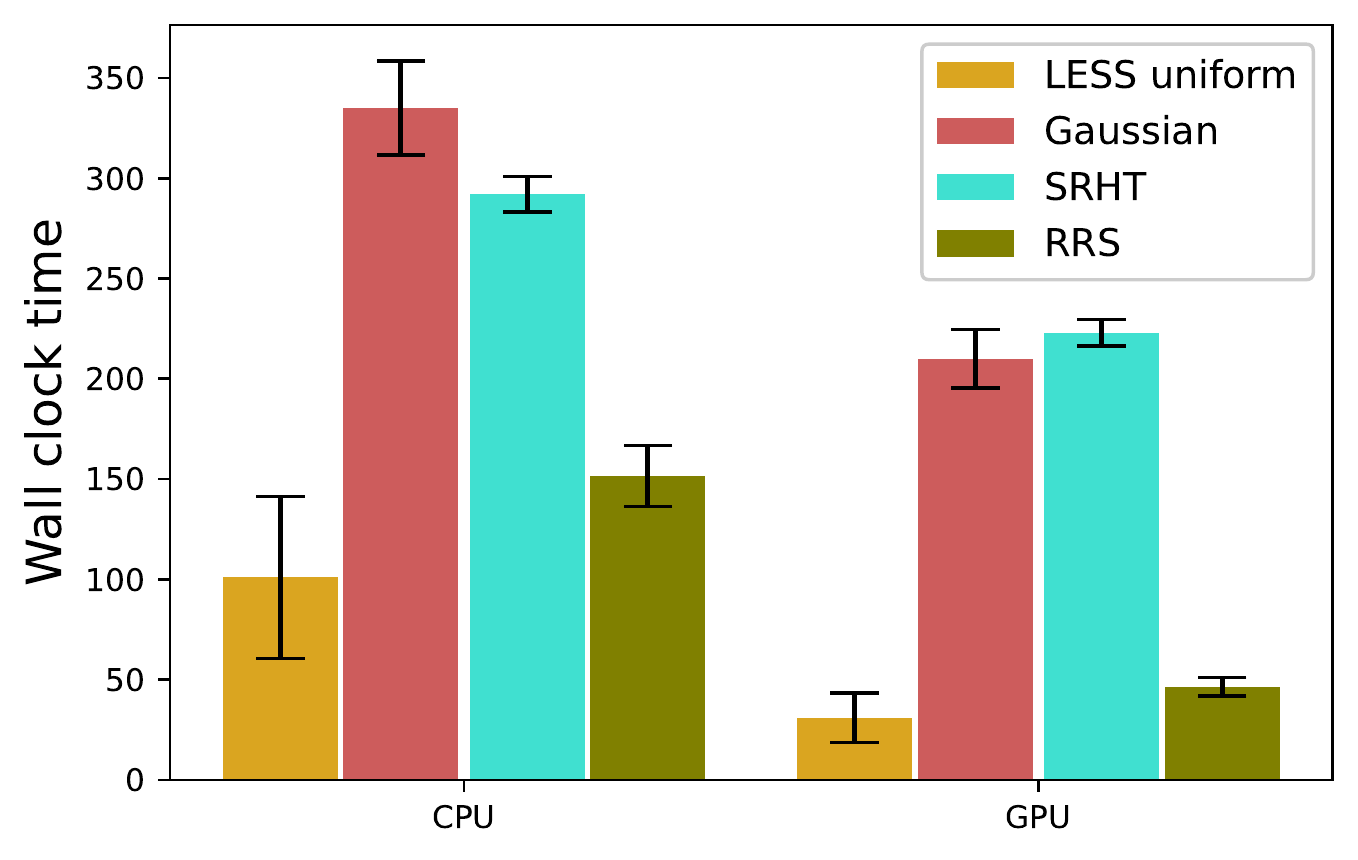}
      \end{minipage}
\nolinebreak%
\begin{minipage}[t]{0.33\textwidth}
  \centering
  \includegraphics[width=\linewidth]{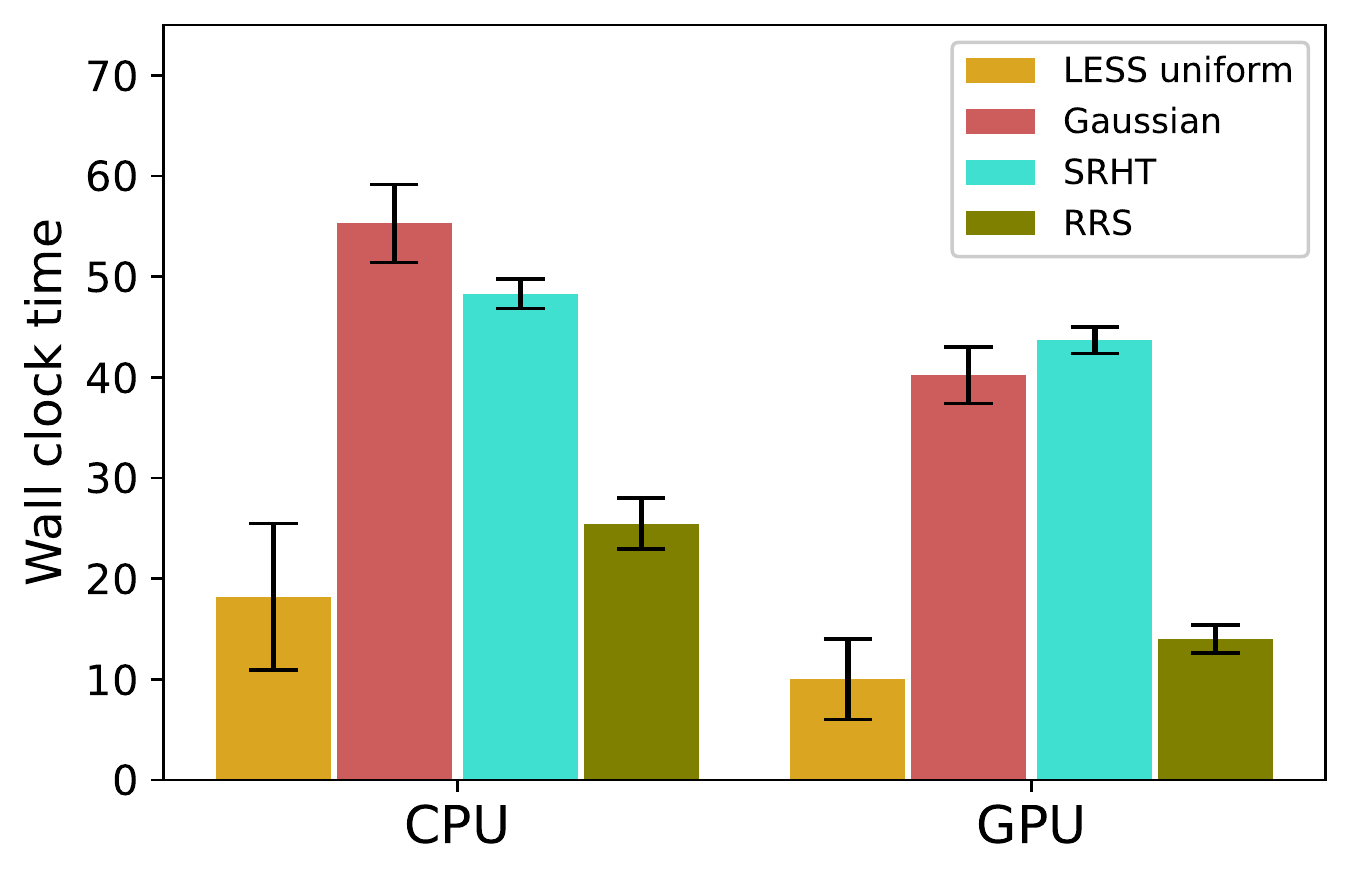}
\end{minipage}
\nolinebreak%
\begin{minipage}[t]{0.33\textwidth}
  \centering
  \includegraphics[width=\linewidth]{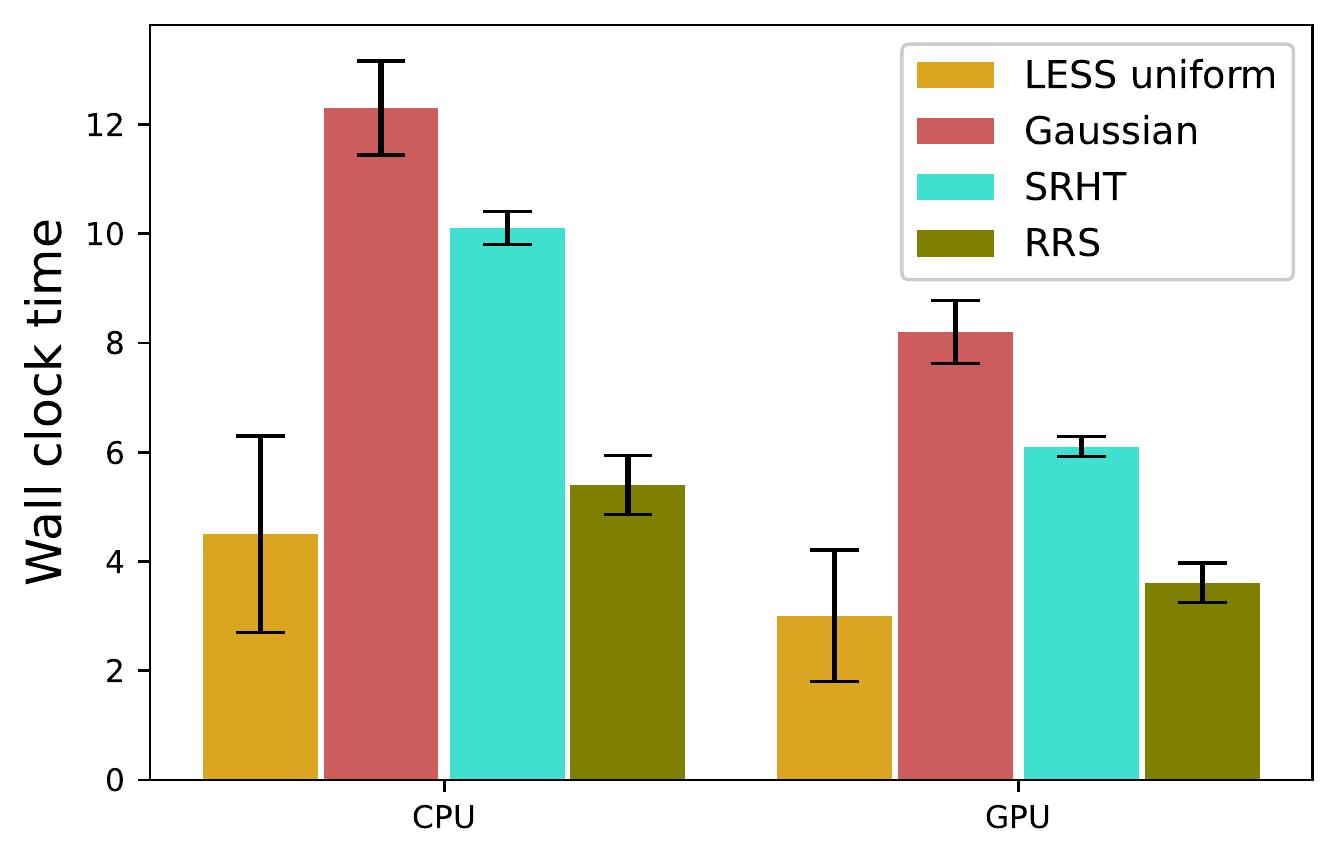}
\end{minipage}
\caption{Top plots show the convergence of Newton Sketch (NS) and
  baselines for logistic regression. We use a sketch size $m=d/2$ for
  NS. In the bottom plots, we report the CPU and GPU wall-clock times to reach a $10^{-6}$ accurate
  solution for NS with different sketching methods.} 
\label{fig:logistic-regression}
\end{figure}

We investigate first the effect of the row density of a LESS-uniform embedding on the Newton Sketch convergence rate and on the time for computing the sketch $\S \A$ for least squares regression. 
In Figure~\ref{fig:row-sparsity}, we report these two performance measures versus the row density.
(This is the empirical analog for real data of Figure~\ref{fig:sparsity-intro}).  
Note that here we also consider a synthetic data matrix with high-coherence, which aims to be more challenging for a uniform sparsifier. 
Remarkably, our prescribed row density of $d$ non-zeros per row offers
an excellent empirical trade-off: except for the CIFAR-10 dataset, for
which random row sampling performs equally well to Gaussian
embeddings, we observe that one can drastically decrease the row
density without significantly impairing the convergence~rate.

%For logistic regression, on Figure \ref{fig:logistic-regression}, we report the error versus
%wall-clock time of the Newton Sketch with LESS-uniform, Gaussian
%embedings and random row sampling (RRS) matrices, and we compare it
%with two standard second-order optimization baselines: 
%Newton's method and BFGS.
Next, in Figure \ref{fig:logistic-regression}, we investigate a
minimization task for the regularized logistic regression
loss. Namely, given a data matrix $\A \in \R^{n \times d}$ with rows denoted by $\mathbf{a}_i$, a target vector of labels $\b \in \{\pm 1\}^n$ and a regularization parameter $\lambda > 0$, the goal is to solve:
\begin{align*}
	\min_{\x \in \R^d} \frac{1}{n} \sum_{i=1}^n \log(1+\exp(-b_i
  \mathbf{a}_i^\top \x)) + \frac{\lambda}{2} \|\x\|_2^2\,. 
\end{align*}
For CIFAR-10 and Musk, we pick $\lambda=10^{-4}$. For WESAD, we pick
$\lambda = 10^{-5}$. We plot the error versus wall-clock time for the Newton
Sketch with LESS-uniform, Gaussian embedings, Subsampled Randomized
Hadamard Transform (SRHT) and Random Row Sampling (RRS) matrices, for
logistic regression, and we compare it with two standard second-order
optimization baselines: Newton's method and BFGS. We also included
three first-order baselines: Gradient Descent (GD), Accelerated GD
(AGD), and Stochastic GD (SGD), observing much worse performance.  
Based on the top plots in Figure \ref{fig:logistic-regression}, we
conclude that Newton-LESS (i.e., NS LESS uniform) offers significant
time speed-ups over all baselines.

Finally, we compare wall-clock time on different hardwares (CPU versus GPU) for the Newton Sketch to
reach a $10^{-6}$-accurate solution (see Appendix
\ref{a:additionalnumericalexperiments} for hardware details). From
Figure~\ref{fig:logistic-regression} (bottom plots), we conclude the following:
%%\vspace{-2mm}
%\begin{itemize}[leftmargin=5mm]
%  \item 
first, when switching from CPU to GPU for WESAD and CIFAR-10, Gaussian embeddings become more
efficient than SRHT, despite a worse time complexity, by taking
better advantage of the massively parallel architecture;
% \item
second, Random
Row Sampling performs better than either of them, despite
having much weaker theoretical guarantees; and 
%\item 
third, LESS-uniform is more efficient than all three other
methods, on both CPU and GPU hardware platforms, observing a
significant speed-up when switching to the parallel GPU architecture.
%\end{itemize}

% On CPU, the SRHT is slightly
% faster than Gaussian embedding, and LESS-uniform is significantly
% faster. While GPU provides acceleration for these three embeddings, we
% remark that Gaussian embeddings slightly outperform the SRHT. This
% empirical observation suggests that Gaussian embeddings are in fact
% practical sketching matrices for modern hardwares, whence it becomes
% even more relevant for additional performance improvement to
% investigate the family of sparsified Gaussian embeddings as we propose
% in this paper with LESS-uniform. 
%

\section{Conclusions}
\label{sec:conclusions}

We showed that, when constructing randomized Hessian estimates for
second-order optimization, we can get the best of both worlds: the
efficiency of Sub-Sampling methods and the precision of Gaussian
embeddings, by using sparse sketching matrices known as LEverage Score
Sparsified (LESS)
embeddings. Our algorithm, called Newton-LESS, enjoys both strong
theoretical convergence guarantees and excellent empirical performance
on a number of hardware platforms. An important future direction is to
explain the surprising effectiveness of the simpler LESS-uniform
method, particularly on
high-coherence matrices, which goes beyond the predictions of our current theory.

\subsubsection*{Acknowledgements}
We would like to acknowledge DARPA, IARPA, NSF, and ONR via its BRC on
RandNLA for providing partial support of this work.  Our conclusions
do not necessarily reflect the position or the policy of our sponsors,
and no official endorsement should be inferred.

\bibliography{../pap}
\bibliographystyle{alpha}

%%%%%%%%%%%%%%%%%%%%%%%%%%%%%%%%%%%%%%%%%%%%%%%%%%%%%%%%%%%%

\ifisarxiv\else
\newpage

\section*{Checklist}

\begin{enumerate}
	
	\item For all authors...
	\begin{enumerate}
		\item Do the main claims made in the abstract and introduction accurately reflect the paper's contributions and scope?
		\answerYes{}
		\item Did you describe the limitations of your work?
		\answerYes{See Section \ref{sec:conclusions}.}
		\item Did you discuss any potential negative societal impacts of your work?
		\answerNA{We do not foresee any potential negative societal impact.}
		\item Have you read the ethics review guidelines and ensured that your paper conforms to them?
		\answerYes{}
	\end{enumerate}
	
	\item If you are including theoretical results...
	\begin{enumerate}
		\item Did you state the full set of assumptions of all theoretical results?
		\answerYes{}
		\item Did you include complete proofs of all theoretical results?
		\answerYes{}
	\end{enumerate}
	
	\item If you ran experiments...
	\begin{enumerate}
		\item Did you include the code, data, and instructions needed to reproduce the main experimental results (either in the supplemental material or as a URL)?
		\answerYes{See link to code repository in the Appendix.}
		\item Did you specify all the training details (e.g., data splits, hyperparameters, how they were chosen)?
		\answerYes{}
		\item Did you report error bars (e.g., with respect to the random seed after running experiments multiple times)?
		\answerYes{}
		\item Did you include the total amount of compute and the type of resources used (e.g., type of GPUs, internal cluster, or cloud provider)?
		\answerYes{}
	\end{enumerate}
	
	\item If you are using existing assets (e.g., code, data, models) or curating/releasing new assets...
	\begin{enumerate}
		\item If your work uses existing assets, did you cite the creators?
		\answerYes{}
		\item Did you mention the license of the assets?
		\answerNA{}
		\item Did you include any new assets either in the supplemental material or as a URL?
		\answerNA{}
		\item Did you discuss whether and how consent was obtained from people whose data you're using/curating?
		\answerNA{}
		\item Did you discuss whether the data you are using/curating contains personally identifiable information or offensive content?
		\answerNA{}
	\end{enumerate}
	
	\item If you used crowdsourcing or conducted research with human subjects...
	\begin{enumerate}
		\item Did you include the full text of instructions given to participants and screenshots, if applicable?
		\answerNA{}
		\item Did you describe any potential participant risks, with links to Institutional Review Board (IRB) approvals, if applicable?
		\answerNA{}
		\item Did you include the estimated hourly wage paid to participants and the total amount spent on participant compensation?
		\answerNA{}
	\end{enumerate}
	
\end{enumerate}

\fi

%\newpage
\appendix

\section{Characterization of Inverse Moments (Proof of Theorem \ref{t:structural})}
\label{a:structural}

In this section, we prove Theorem \ref{t:structural}. The proof
consists of two parts corresponding to the first and second moment of
$\Q$. The analysis of the first moment bound is nearly the same as in
\cite{less-embeddings}, so we only outline it here, highlighting the
differences coming from the regularization matrix $\C$. The analysis
of the second moment is our main contribution in this proof, and we discuss it
in detail. First, however, we define the high probability event $\Ec$
which is common to both parts.

To simplify the proof, we will let $m$ be divisible by $3$.
Note that we have $\Q=(\U^\top\S^\top\S\U+\D)^{-1}$ for
%$\gamma = \frac m{m-\deff}$ and
$\D=\H^{-\frac12}\C\H^{-\frac12}$. Moreover, let  $\S_{-i}$ denote $\S$ 
without the $i$th row, and let
$\Q_{-i}=(\U^\top\S_{-i}^\top\S_{-i}\U+\D)^{-1}$.
Also, define $\Sbt_1$, $\Sbt_2$, $\Sbt_3$ as the matrices consisting
of the first, second and third group of $m/3$ rows in $\S$, all
scaled by $\sqrt 3$, so that $\S^\top\S=\frac13\sum_{j=1}^3\Sbt_j^\top\Sbt_j$.
% Also, let the
% $i$th row of $\S$ be $\frac1{\sqrt{m-\deff}}\s_i^\top$, and define 
% $\x_i=m\U^\top\s_i$, so that $\U^\top\S^\top\S\U=\frac\gamma m\sum_i\x_i\x_i^\top$.
Next, using $\Sigmab=\U^\top\U$, similarly as in \cite{less-embeddings} we let $t=m/3$ and define three independent events:
\begin{align}
  \Ec_j: \quad
  \big\|\Sigmab-\U^\top\Sbt_j^\top\Sbt_j\U\big\|\leq
  \eta,\quad\text{for}\quad j=1,2,3,\label{eq:event} 
\end{align}
with $\Ec=\bigwedge_{j=1}^3 \Ec_j$ defined as the intersection of the
events. Conditioned on $\Ec$, we have:
\begin{align*}
\|\I-(\U^\top\S^\top\S\U+\D)\|
  &=\Big\|\frac13\sum_{j=1}^3\big(\Sigmab-\U^\top\Sbt_j^\top\Sbt_j\U\big)\Big\|
\leq \frac13\sum_{j=1}^3\|\Sigmab-\U^\top\Sbt_j^\top\Sbt_j\U\|\leq\eta,
\end{align*}
which implies that $\|\Q-\I\|\leq \frac{\eta}{1-\eta}\leq
2\eta$. Furthermore, an important property of the definition of $\Ec$
is that for each $i\in\{1,...,m\}$ there is a $j\in\{1,2,3\}$
such  that $\Ec_j$ is independent of $\x_i$, and after conditioning only on
$\Ec_j$ we get $\|\Q_{-i}\|\leq 6$.
From Condition \ref{cond1} and the union bound
we conclude that $\Pr(\Ec)\geq 1-\delta$.

The analysis of both the first and second moment uses the
Sherman-Morrison formula, to separate one of the rows from the
rest of the sketch. We state this formula in the following lemma.
\begin{lemma}[Sherman-Morrison]\label{lem:rank-one}
For $\A \in \R^{n \times n}$ invertible and $\u,\v \in \R^n$, $\A +
\u \v^\top$ is invertible if and only if $1+\v^\top \A^{-1} \u \neq 0$ and 
\begin{equation*}
    (\A + \u \v^\top)^{-1} = \A^{-1} - \frac{\A^{-1} \u \v^\top \A^{-1} }{1+\v^\top \A^{-1} \u}.
\end{equation*}
From the above formula, it follows that:
\begin{equation*}
    (\A + \u \v^\top)^{-1} \u = \frac{\A^{-1} \u}{1+\v^\top \A^{-1} \u}.
\end{equation*}
\end{lemma}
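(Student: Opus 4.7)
The plan is to verify the formula by direct multiplication, then recover the invertibility equivalence and the second identity as easy corollaries. So first I would take the candidate inverse $\B \defeq \A^{-1} - \frac{\A^{-1}\u\v^\top\A^{-1}}{1+\v^\top\A^{-1}\u}$ (which makes sense whenever $1+\v^\top\A^{-1}\u\neq 0$) and compute $(\A+\u\v^\top)\B$. Expanding gives $\I + \u\v^\top\A^{-1} - \frac{\u\v^\top\A^{-1}+\u(\v^\top\A^{-1}\u)\v^\top\A^{-1}}{1+\v^\top\A^{-1}\u}$, and pulling $\u$ and $\v^\top\A^{-1}$ out of the numerator gives $\u(1+\v^\top\A^{-1}\u)\v^\top\A^{-1}$, which cancels against the scalar denominator and leaves $\I + \u\v^\top\A^{-1} - \u\v^\top\A^{-1}=\I$. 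A symmetric multiplication on the other side, or invoking that a square left inverse is a two-sided inverse, shows $\B = (\A+\u\v^\top)^{-1}$.

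Next, for the ``if and only if'' part of invertibility, one direction is immediate from the preceding computation: if $1+\v^\top\A^{-1}\u\neq 0$ then $\B$ serves as an explicit inverse. For the converse, I would appeal to the matrix determinant lemma $\det(\A+\u\v^\top)=\det(\A)(1+\v^\top\A^{-1}\u)$; since $\det(\A)\neq 0$ by hypothesis, $\A+\u\v^\top$ is singular exactly when $1+\v^\top\A^{-1}\u=0$. Alternatively, if $1+\v^\top\A^{-1}\u=0$ then $\A^{-1}\u$ is a nonzero vector in the kernel of $\A+\u\v^\top$, since $(\A+\u\v^\top)\A^{-1}\u=\u+\u(\v^\top\A^{-1}\u)=\u(1+\v^\top\A^{-1}\u)=\zero$, so the matrix is singular. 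I would present the kernel argument since it does not require invoking the determinant lemma separately.

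Finally, the second identity $(\A+\u\v^\top)^{-1}\u = \A^{-1}\u/(1+\v^\top\A^{-1}\u)$ follows by applying the already-derived formula to $\u$: one gets $\A^{-1}\u - \frac{\A^{-1}\u(\v^\top\A^{-1}\u)}{1+\v^\top\A^{-1}\u}$, and combining the two terms over the common denominator yields $\A^{-1}\u\cdot\frac{(1+\v^\top\A^{-1}\u)-\v^\top\A^{-1}\u}{1+\v^\top\A^{-1}\u}=\frac{\A^{-1}\u}{1+\v^\top\A^{-1}\u}$.

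There is no genuine obstacle here, since the lemma is classical; the only mild care needed is to explicitly state that the scalar $1+\v^\top\A^{-1}\u$ commutes with the surrounding vectors (so it can be pulled in and out of the numerator) and to handle the ``only if'' direction cleanly without circularly assuming the formula already holds. I would therefore write the proof as a short direct verification followed by a one-line kernel argument for the invertibility equivalence and a one-line algebraic simplification for the corollary.
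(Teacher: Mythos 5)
Your proof is correct. Note that the paper states this lemma as a classical fact and does not supply a proof, so there is no ``paper approach'' to compare against; the direct verification you give is the standard one and fills the gap cleanly. The forward direction (multiply out $(\A+\u\v^\top)\B$ and simplify), the kernel argument for the converse (if $1+\v^\top\A^{-1}\u=0$ then $(\A+\u\v^\top)\A^{-1}\u=\zero$), and the algebraic simplification for the corollary all check out. The only micro-detail worth spelling out in the converse direction is that $\A^{-1}\u\neq\zero$: if $\u=\zero$ then $1+\v^\top\A^{-1}\u=1\neq 0$, so the hypothesis $1+\v^\top\A^{-1}\u=0$ forces $\u\neq\zero$, and invertibility of $\A$ then gives $\A^{-1}\u\neq\zero$, so it is indeed a nonzero kernel vector.
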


\vspace{-3mm}
\subsection{Proof of first moment bound}
In this part of the proof we recall the decomposition of
$\E_{\Ec}[\Q]$ used by \cite{less-embeddings}. Most of their analysis is
unaffected by the presence of the regularization matrix $\D$, so we
will focus on the steps that will also be needed for our analysis of the
second moment.  Let the
$i$th row of $\S$ be $\frac1{\sqrt{m-\deff}}\s_i^\top$, and define 
$\x_i=m\U^\top\s_i$, so that $\U^\top\S^\top\S\U=\frac\gamma
m\sum_i\x_i\x_i^\top$, where $\gamma=\frac{m}{m-\deff}$. Using
$\gamma_i=1+\frac\gamma{m}\x_i^\top\Q_{-i}\x_i$, we have: 
\begin{align*}
  \E_{\Ec}[\Q]-\I
  &= \E_{\Ec}[\Q(\Sigmab+\D) - \Q(\U^\top\S^\top\S\U+\D)]
  \\
  &=\E_{\Ec}[\Q\Sigmab] - \E_{\Ec}[\Q\U^\top\S^\top\S\U]
  \\
  &\overset{(*)}{=}\E_{\Ec}[\Q\Sigmab] -
    \E_{\Ec}[\tfrac\gamma{\gamma_i}\Q_{-i}\x_i\x_i^\top]
  \\
  &=\E_{\Ec}[\Q-\Q_{-i}]\Sigmab +
    \E_{\Ec}[\Q_{-i}(\Sigmab-\x_i\x_i^\top)]
    +\E_{\Ec}[(1-\tfrac\gamma{\gamma_i})\Q_{-i}\x_i\x_i^\top],
\end{align*}
where $(*)$ follows from the Sherman-Morrison formula.
From this point, the analysis of \cite{less-embeddings} proceeds to
bound the spectral norm of the first two terms by $O(1/m)$, and the
spectral norm of the last term by $O(\sqrt{\tr(\U^\top\U)}/m)$. In
their setup, $\C=\zero$, which means that $\tr(\U^\top\U)=d$,
whereas in our more general statement, we let
$\deff=\tr(\U^\top\U)$. This does not affect the proofs.
For the sake of our analysis of the second moment, we separate out the
following guarantees obtained by \cite{less-embeddings}, given here in
a slightly more general form than originally.
\begin{lemma}[\cite{less-embeddings}]\label{l:helper}
  The following bounds hold for $k\in\{1,2\}$:
  % Let $\C_{-i}$ be a positive semi-definite random matrix that is
  % constructed from the vectors $\x_1,...,\x_m$, excluding
  % $\x_i$. If $\|\C_{-i}\|=O(1)$ when conditioned on $\Ec_1\wedge\Ec_2$, then:
  \begin{align*}
    \|\E_{\Ec}[\Q^k-\Q_{-i}^k]\Sigmab\|
    &= O(1/m),\\
    \|\E_{\Ec}[\Q_{-i}^k(\x_i\x_i^\top-\Sigmab)]\|
    &= O(1/m),\\
    \|\E_{\Ec}[(\tfrac\gamma{\gamma_i}-1)\Q_{-i}^k\x_i\x_i^\top]\|
    &=O(\sqrt \deff/m).
  \end{align*}
\end{lemma}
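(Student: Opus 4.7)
The plan is to bound each of the three quantities by applying the Sherman--Morrison identity (Lemma~\ref{lem:rank-one}) to isolate $\x_i$ from $\Q_{-i}$, then exploiting the decomposition $\Ec=\Ec_1\cap\Ec_2\cap\Ec_3$: for every fixed row index $i$, at least one group-event $\Ec_j$ is independent of $\x_i$.

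\textbf{Setup.} Let $j_0=j_0(i)\in\{1,2,3\}$ be the group containing row $i$, and set $\Ec^*:=\bigcap_{j\ne j_0}\Ec_j$, so that $\Ec^*$ is independent of $\x_i$ while $\Pr(\Ec\,\triangle\,\Ec^*)\le\Pr(\overline{\Ec_{j_0}})\le\delta\le m^{-3}$. The sandwich that proved $\|\Q-\I\|\le 2\eta$ under $\Ec$ applies identically to $\Q_{-i}$ under $\Ec^*$: since $\D+\Sigmab=\I$ and a factor $\tfrac{1}{3}\U^\top\Sbt_j^\top\Sbt_j\U$ with $j\ne j_0$ appears inside $\Q_{-i}^{-1}$, one obtains $\Q_{-i}^{-1}\succeq\tfrac{1-\eta}{3}\I$, hence $\|\Q_{-i}\|\le 6$ on $\Ec^*$. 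All subsequent estimates will first be developed under $\E_{\Ec^*}$, and the passage to $\E_\Ec$ is controlled by H\"older's inequality combined with the moment bounds $\E\|\x_i\|^{2p}=O(\deff^p)$ (from Condition~\ref{cond2} iterated, or directly from sub-Gaussianity of $\s$) and $\Pr(\overline{\Ec_{j_0}})\le\delta\le m^{-3}$, contributing an $O(\deff\,\delta^{1-1/p})=o(1/m)$ correction for $p$ large.

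\textbf{Bounds 1 and 2.} For Bound~1, Sherman--Morrison gives $\Q-\Q_{-i}=-\tfrac{\gamma}{m\gamma_i}\Q_{-i}\x_i\x_i^\top\Q_{-i}$, already carrying the free prefactor $\gamma/m=1/(m-\deff)=O(1/m)$. Writing $\gamma_i^{-1}=\gamma^{-1}+(\gamma-\gamma_i)/(\gamma\gamma_i)$, applying $\E_{\Ec^*}$, and using $\E[\x_i\x_i^\top\mid\Q_{-i},\Ec^*]=\Sigmab$, the leading piece becomes $-\tfrac{1}{m}\E_{\Ec^*}[\Q_{-i}\Sigmab\Q_{-i}\Sigmab]$, of operator norm $O(1/m)$ since $\|\Q_{-i}\|\le 6$ and $\|\Sigmab\|\le 1$ on $\Ec^*$; the residual from $\gamma_i\ne\gamma$ is handled by the estimate of Bound~3 below. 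The $k=2$ case follows via the telescoping identity $\Q^2-\Q_{-i}^2=(\Q-\Q_{-i})\Q+\Q_{-i}(\Q-\Q_{-i})$, paying one extra $O(1)$ factor. Bound~2 is even simpler: the independence $\x_i\perp(\Q_{-i},\Ec^*)$ together with $\E[\x_i\x_i^\top]=\Sigmab$ immediately gives $\E[\Q_{-i}^k(\x_i\x_i^\top-\Sigmab)\mathbf 1_{\Ec^*}]=0$, so the target norm equals $\|\E[\Q_{-i}^k(\x_i\x_i^\top-\Sigmab)\mathbf 1_{\Ec^*}\mathbf 1_{\overline{\Ec_{j_0}}}]\|$, which is $O(1/m)$ by the H\"older/tail argument above.

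\textbf{Bound 3 (the crux).} I would decompose
\[ \frac{\gamma}{\gamma_i}-1 \;=\; \frac{\gamma}{m\gamma_i}\Bigl[\bigl(\deff-\tr(\Q_{-i}\Sigmab)\bigr)\;-\;\bigl(\x_i^\top\Q_{-i}\x_i-\tr(\Q_{-i}\Sigmab)\bigr)\Bigr]. \]
The first bracket is deterministic given $\Q_{-i}$ and of order $\eta\cdot\deff=O(\sqrt\deff)$ on $\Ec^*$, since $\|\Q_{-i}-\I\|=O(\eta)$ and the hypothesis $m\ge\Omega(\deff)$ forces $\eta\lesssim 1/\sqrt\deff$. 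The second is a centered quadratic form in $\x_i$ whose conditional variance is bounded by Condition~\ref{cond2} (with $\B=\Q_{-i}$) as $\alpha\,\tr((\Q_{-i}\Sigmab)^2)=O(\deff)$, so its $L^2$-norm is $O(\sqrt\deff)$. The main obstacle is that $\gamma_i^{-1}$ itself depends on $\x_i$ through the same quadratic form, so a naive Cauchy--Schwarz against $\Q_{-i}^k\x_i\x_i^\top$ only yields $O(\deff^{3/2}/m)$. To reach $O(\sqrt\deff/m)$, I would first substitute $\gamma_i^{-1}\mapsto\gamma^{-1}$ and evaluate the $\x_i$-expectation of the cubic $(\x_i^\top\Q_{-i}\x_i-\tr(\Q_{-i}\Sigmab))\,\Q_{-i}^k\,\x_i\x_i^\top$ in closed form via a second application of Condition~\ref{cond2} to cross-moments; the $\gamma_i^{-1}\leftrightarrow\gamma^{-1}$ discrepancy is then controlled by iterating the same argument once more.
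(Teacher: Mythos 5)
The paper does not prove Lemma~\ref{l:helper}: it is imported verbatim from \cite{less-embeddings}, with the note that replacing $d$ by $\deff$ ``does not affect the proofs.'' So there is no in-paper proof to compare against, and what you have written is a reconstruction from scratch.

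Your high-level strategy is right: Sherman--Morrison to peel off $\x_i$, and conditioning on events independent of $\x_i$ (the three-way split $\Ec=\Ec_1\cap\Ec_2\cap\Ec_3$) so the expectation over $\x_i$ can be taken first. Your treatment of Bounds~1 and 2 is essentially sound modulo the need to pin down the exponent in the H\"older step (a specific choice such as $p=3$ together with the sub-exponential moment bound on $\|\x_i\|^2$ coming from a Hanson--Wright-type inequality, rather than ``$p$ large'').

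For Bound~3, however, two concrete claims are wrong. First, conditioning on $\Ec^*=\bigcap_{j\ne j_0}\Ec_j$ gives only a one-sided lower bound $\Q_{-i}^{-1}\succeq c\,\I$ (hence $\|\Q_{-i}\|\le O(1)$), \emph{not} a two-sided estimate $\|\Q_{-i}-\I\|=O(\eta)$: the $j_0$-block is dropped from below, and on top of that $\Q_{-i}^{-1}=\Q^{-1}-\tfrac\gamma m\x_i\x_i^\top$ can shift $\Q_{-i}$ from $\I$ by $O(\deff/m)=O(1)$. Second, the hypothesis $m\ge\Omega(\deff)$ does \emph{not} force $\eta\lesssim 1/\sqrt\deff$; the subspace-embedding requirement is $m\gtrsim\deff/\eta^2$, so $m=C\deff$ only yields $\eta\le 1/\sqrt C$, a constant. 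Consequently the deterministic pointwise bound on $\deff-\tr(\Q_{-i}\Sigmab)$ is $O(\deff)$, not $O(\sqrt\deff)$. The $O(\sqrt\deff)$ control must instead be probabilistic: the conditional \emph{mean} of $\tr(\Q_{-i}\Sigmab)$ is within $O(\deff^{3/2}/m)=O(\sqrt\deff)$ of $\deff$ (by a first-moment bound of the same type applied to $\Q_{-i}$), and the conditional \emph{variance} is $O(\deff)$ (a Burkholder/martingale-difference argument, cf.~Lemma~\ref{l:applying-burkholder}). Your outline never invokes this variance bound, which is the actual engine behind the $O(\sqrt\deff/m)$ rate, and the replacement $\gamma_i^{-1}\mapsto\gamma^{-1}$ followed by ``iterating'' is left too vague to verify; as you note yourself, Bound~3 is the crux, and it is exactly the part that your argument does not close.
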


\vspace{-3mm}
\subsection{Proof of second moment bound}
We next present the analysis of the second moment, $\E_{\Ec}[\Q^2]$,
which requires a considerably more elaborate decomposition. Using
$\rho=\frac{\deff}{m-\tdeff}$ and $\Sigmab=\U^\top\U$, we have:
\begin{align*}
  \E_{\Ec}[\Q^2] -(\I + \rho\Sigmab) =
  \underbrace{\big(\E_{\Ec}[\Q]-\I\big)}_{\T_1}
  + \big(\E_{\Ec}[\Q(\Q-\I)]-\rho\Sigmab\big).
\end{align*}
Recalling that $\Sigmab+\D=\I$, we can rewrite the last term as:
\begin{align*}
  \E_{\Ec}[\Q(\Q-\I)]
  &=\E_{\Ec}[\Q(\Q(\Sigmab+\D) - \Q(\U^\top\S^\top\S\U+\D))]\\
  &= \E_{\Ec}[\Q(\Q\Sigmab-\Q\U^\top\S^\top\S\U )]
  \\
  &\overset{(a)}=\E_{\Ec}[\Q(\Q\Sigmab-\tfrac\gamma{\gamma_i}\Q_{-i}\x_i\x_i^\top)]
  \\
  &\overset{(b)}=
    \E_{\Ec}[\Q^2]\Sigmab
    -  \E_{\Ec}[\tfrac\gamma{\gamma_i}\Q_{-i}^2\x_i\x_i^\top] 
    +\E_{\Ec}\Big[\tfrac{\x_i^\top\Q_{-i}^2\x_i}{m}\,
    \tfrac{\gamma^2}{\gamma_i^2}\Q_{-i}\x_i\x_i^\top\Big]	
  \\
  &=\underbrace{\E_{\Ec}[\Q^2-\Q_{-i}^2]\Sigmab}_{\T_2} +
    \underbrace{\E_{\Ec}[\Q_{-i}^2(\Sigmab-\x_i\x_i^\top)]}_{\T_3}+
    \underbrace{\E_{\Ec}[(1-\tfrac\gamma{\gamma_i})\Q_{-i}^2\x_i\x_i^\top]}_{\T_4} +
    \E_{\Ec}\Big[\tfrac{\x_i^\top\Q_{-i}^2\x_i}{m}\,
    \tfrac{\gamma^2}{\gamma_i^2}\Q_{-i}\x_i\x_i^\top\Big],
\end{align*}
for a fixed $i$, where we denote
$\gamma_i=1+\frac{\gamma}{m}\x_i^\top\Q_{-i}\x_i$. Note that we used
the Sherman-Morrison formula twice, in steps $(a)$ and $(b)$. We can put everything 
together as follows:
\begin{align*}
  \E_{\Ec}[\Q^2] -(\I + \rho\Sigmab) 
  &= \T_1+\T_2+\T_3+\T_4\\
  &+
    \underbrace{\rho(\E_{\Ec}[\Q_{-i}]-\I)\Sigmab}_{\T_5}
    +\underbrace{\rho\,\E_{\Ec}[\Q_{-i}(\x_i\x_i^\top-\Sigmab)]}_{\T_6}
    +\underbrace{\E_{\Ec}\Big[\Big(\tfrac{\x_i^\top\Q_{-i}^2\x_i}m
    \,\tfrac{\gamma^2}{\gamma_i^2}-\rho\Big)\Q_{-i}\x_i\x_i^\top\Big]}_{\T_7}.
\end{align*}
From the bound on the first moment of $\Q$, we conclude that
$\|\T_1\|=O(\sqrt \deff/m)$ and that $\|\T_5\|=O(\sqrt\deff/m)$. Without loss of generality,
assume that events $\Ec_1$ and $\Ec_2$ are both independent of
$\x_i$, and let $\Ec'=\Ec_1\wedge\Ec_2$ as well as
$\delta_3=\Pr(\neg\Ec_3)$.
Next, we will use the fact that for a p.s.d.~random
matrix $\M$ in the probability space
of $\S$, we have $\E_{\Ec}[\M]\preceq\frac1{1-\delta}\E_{\Ec'}[\M]\preceq 2\cdot\E_{\Ec'}[\M]$.
% To bound
% $\T_2$ we additionally observe that:
% \begin{align*}
%   \E_{\Ec'}[\Q_{-i}^2-\Q^2]
%   &\preceq \E_{\Ec'}\Big[\Q_{-i}^2 -
%     \big(\Q_{-i}-\tfrac\gamma{m\gamma_i}\Q_{-i}\x_i\x_i^\top\Q_{-i}\big)^2\Big]
%   \\
%   &\preceq \E_{\Ec'}\Big[\tfrac\gamma{m\gamma_i}\Q_{-i}^2\x_i\x_i^\top\Q_{-i}
%     +\tfrac\gamma{m\gamma_i}\Q_{-i}\x_i\x_i^\top\Q_{-i}^2\Big]
%     =\tfrac{2\gamma}{m\gamma_i}\,\Q_{-i}^3,
% \end{align*}
% so, since $\gamma_i>1$ and, conditioned on $\Ec'$, we have $\|\Q_{-i}\|=O(1)$, it
% follows that $\|\T_2\|= O(1/m)$.

Using $k=2$ %$\C_{-i}=\Q_{-i}^2$
in Lemma \ref{l:helper}, we can bound
$\|\T_2\|$, $\|\T_3\|$ and $\|\T_4\|$ by $O(\sqrt \deff/m)$, and setting
$k=1$, we can do the same for $\|\T_6\|$.

Thus, it remains  to bound $\|\T_7\|$. Let $\gammat = \frac m{m-\tdeff}$. We first use the Cauchy-Schwartz inequality
twice, obtaining that:
\begin{align}
  \|\T_7\|\leq \frac1m\sqrt{\E_{\Ec}\Big[\Big(\gammat \deff- \x_i^\top\Q_{-i}^2\x_i
  \cdot\tfrac{\gamma^2}{\gamma_i^2}\Big)^2\Big]}\cdot
  \sup_{\|\u\|=1}\sqrt[4]{\E_{\Ec}\big[(\u^\top\Q_{-i}\x_i)^4\big]}\cdot
  \sup_{\|\u\|=1}\sqrt[4]{\E_{\Ec}\big[(\x_i^\top\u)^4\big]}.\label{eq:t7}
\end{align}
The latter two terms can each be bounded  easily  by
$O(\sqrt[4]{\alpha+1})$ using Condition \ref{cond2}. For instance,
considering the middle term, we have:
\begin{align*}
  \E_{\Ec}\big[(\u^\top\Q_{-i}\x_i)^4\big]
  & \leq 2\,\E_{\Ec'}\Big[\E\big[(\x_i^\top\Q_{-i}\u\u^\top\Q_{-i}\x_i)^2\mid\Q_{-i}\big]\Big]
  \\
  &=
    2\,\E_{\Ec'}\Big[\Var\big[\x_i^\top\Q_{-i}\u\u^\top\Q_{-i}\x_i\mid\Q_{-i}\big]
    + \big(\E[\x_i^\top\Q_{-i}\u\u^\top\Q_{-i}\x_i\mid\Q_{-i}]\big)^2\Big]
  \\
  &\leq
    2\,\E_{\Ec'}\Big[\alpha\,\tr\big(\U(\Q_{-i}\u\u^\top\Q_{-i})^2\U^\top\big)
    +2\,\big(\tr(\U\Q_{-i}\u\u^\top\Q_{-i}\U^\top)\big)^2\Big]
    \\
  &\leq
   2\,\E_{\Ec'}\big[ O(\alpha)\,\u_i^\top\Q_{-i}\U^\top\U\Q_{-i}\u_i
    + (\u_i^\top\Q_{-i}\U^\top\U\Q_{-i}\u_i)^2\big]
    \\
  &\leq
    2\,\E_{\Ec'}\big[O(\alpha)\|\Q_{-i}\Sigmab\Q_{-i}\| +
    \|\Q_{-i}\Sigmab\Q_{-i}\|^2\big] = O(\alpha+1),
\end{align*}
where we also used that matrices $\Q_{-i}$, $\Sigmab$ and $\u\u^\top$
have spectral norms bounded by $O(1)$. Similarly, we obtain that
$\E_{\Ec}[(\x_i^\top\u)^4]=O(\alpha+1)$. For the first 
term in \eqref{eq:t7}, we have: 
\begin{align*}
  \E_{\Ec}\Big[\Big(\gammat \deff- \x_i^\top\Q_{-i}^2\x_i
  \cdot\tfrac{\gamma^2}{\gamma_i^2}\Big)^2\Big]
  &\leq 2\cdot \E_{\Ec'}\Big[\Big(\gammat \deff- \x_i^\top\Q_{-i}^2\x_i
    \cdot\tfrac{\gamma^2}{\gamma_i^2}\Big)^2\Big]
  \\
  &\leq 4\cdot \E_{\Ec'}\big[(\gammat\deff-\x_i^\top\Q_{-i}^2\x_i)^2\big]
    +4\cdot\E_{\Ec'}\Big[\big(\x_i^\top\Q_{-i}^2\x_i\big)^2
    \Big(\tfrac{\gamma^2}{\gamma_i^2}-1\Big)^2\Big].
\end{align*}
We can further break down the first term as follows:
\begin{align}
  \E_{\Ec'}\big[(\gammat \deff-\x_i^\top\Q_{-i}^2\x_i)^2\big]
  =(\gammat \deff-\E_{\Ec'}[\tr(\Q_{-i}^2\Sigmab)])^2 +
  \Var_{\Ec'}[\tr(\Q_{-i}^2\Sigmab)]+
  \E_{\Ec'}\big[(\tr(\Q_{-i}^2\Sigmab)-\x_i^\top\Q_{-i}^2\x_i)^2\big]
  \label{eq:three-terms}
\end{align}
The latter term can be bounded immediately using
Condition~\ref{cond2}. The middle term is handled by a separate
lemma, which is an immediate extension of Lemma 25 in \cite{less-embeddings}.
% \begin{lemma}[\cite{less-embeddings}]\label{l:applying-burkholder}
%      Let $\Var_{\Ec'}[\cdot]$ be the conditional variance
%    with respect to event $\Ec'=\Ec_1\wedge\Ec_2$.   Let $\C_{-i}$ and $\C_{-ij}$ be
%    positive semi-definite random matrices that are
%   constructed from the vectors $\x_1,...,\x_m$, excluding
%   $\x_i$, and excluding both $\x_i$ and $\x_j$, respectively. Suppose
%   that, when conditioned on $\Ec_1$, $\|\C_{-i}\|=O(1)$ and $|\tr(\C_{-ij}-\C_{-i})|=O(1)$ when conditioned on
% \end{lemma}
\begin{lemma}[\cite{less-embeddings}]\label{l:applying-burkholder}
  Let $\Var_{\Ec'}[\cdot]$ be the conditional variance
  with respect to event $\Ec'=\Ec_1\wedge\Ec_2$. Then, for $k\in\{1,2\}$, 
  \begin{align*}
    \Var_{\Ec'}\!\big[\tr(\Q_{-i}^k\Sigmab)\big] =O(\deff).
  \end{align*}
\end{lemma}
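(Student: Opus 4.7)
The plan is to bound $\Var_{\Ec'}[\tr(\Q_{-i}^k\Sigmab)]$ by constructing a Doob martingale over the rows of $\S$ other than row $i$, and applying Burkholder's inequality (equivalently, the martingale analog of Efron--Stein). The strategy is essentially the one used by \cite{less-embeddings} for the special case $k=1$, $\C=\zero$, adapted to handle both the regularization matrix $\D$ (which leaves $\Q_{-i}$ of the same structural form, just with $\D$ added to the inverse) and the $k=2$ case (which only changes the matrix sandwiched between the rank-one updates).

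Concretely, enumerate the rows of $\S$ excluding row $i$ as $\s_{j_1},\dots,\s_{j_{m-1}}$, and on the event $\Ec'$ define the Doob sequence
\[
Z_\ell \;=\; \E_{\Ec'}\!\big[\tr(\Q_{-i}^k\Sigmab)\,\big|\,\s_{j_1},\dots,\s_{j_\ell}\big],\qquad \ell=0,1,\dots,m-1.
\]
Then $\Var_{\Ec'}[\tr(\Q_{-i}^k\Sigmab)]=\sum_\ell\E_{\Ec'}[(Z_\ell-Z_{\ell-1})^2]$. To bound an individual difference, I would couple by replacing $\s_{j_\ell}$ with an independent copy $\s_{j_\ell}'$ and use the Sherman--Morrison identity (Lemma~\ref{lem:rank-one}) to write the resulting change in $\Q_{-i}$ as a rank-one perturbation of $\Q_{-i,-j_\ell}$ (the matrix obtained by deleting both rows $i$ and $j_\ell$). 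This produces, for $k=1$,
\[
\tr(\Q_{-i}\Sigmab)-\tr(\Q_{-i,-j_\ell}\Sigmab)\;=\;-\frac{\tfrac{\gamma}{m}\,\x_{j_\ell}^\top\Q_{-i,-j_\ell}\Sigmab\Q_{-i,-j_\ell}\x_{j_\ell}}{1+\tfrac{\gamma}{m}\x_{j_\ell}^\top\Q_{-i,-j_\ell}\x_{j_\ell}},
\]
and an analogous but slightly longer rank-one identity for $k=2$ obtained by applying Sherman--Morrison twice.

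The key input is that, on $\Ec'$, the matrix $\Q_{-i,-j_\ell}$ satisfies $\|\Q_{-i,-j_\ell}\|=O(1)$, so the denominator above is of constant order. Taking conditional expectation over $\s_{j_\ell}$ and using Condition~\ref{cond2} together with $\|\U^\top\U\|=O(1)$ and $\tr(\U^\top\U)=\deff$, the numerator satisfies $\E[\x_{j_\ell}^\top M \x_{j_\ell}]=O(\deff)$ and $\Var[\x_{j_\ell}^\top M \x_{j_\ell}]=O(\tr(\U M^2\U^\top))=O(\deff)$ for the relevant psd matrices $M$ (with $\|M\|=O(1)$). Hence $\E_{\Ec'}[(Z_\ell-Z_{\ell-1})^2]=O(\deff^2/m^2)+O(\deff/m^2)=O(\deff^2/m^2)$, and summing over $\ell=1,\dots,m-1$ gives $\Var_{\Ec'}[\tr(\Q_{-i}^k\Sigmab)]=O(\deff^2/m)=O(\deff)$, where the last step uses the standing assumption $m\geq \Omega(\deff)$. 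The $k=2$ case is identical after checking that the extra rank-one Sherman--Morrison step contributes a factor of the same order.

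The main obstacle is handling the conditioning on $\Ec'=\Ec_1\wedge \Ec_2$, which in principle destroys independence of the $\s_{j_\ell}$. The trick, as in \cite{less-embeddings}, is to choose the indexing so that for each $\ell$ the event $\Ec'$ is measurable with respect to at most one of the two subgroups $\Sbt_1,\Sbt_2$ that contain $\s_{j_\ell}$, so conditioning on $\Ec'$ inflates second moments only by the constant factor $1/(1-\delta)=O(1)$ via the domination $\E_{\Ec'}[\M]\preceq 2\,\E[\M]$ for psd $\M$. This lets us do the martingale bookkeeping essentially as if the $\s_{j_\ell}$ were unconditionally i.i.d., while still exploiting $\|\Q_{-i,-j_\ell}\|=O(1)$ on the event. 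With this in place the proof reduces to the quadratic-form moment bounds above, completing the argument.
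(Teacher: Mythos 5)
Your argument reproduces the Doob-martingale / Burkholder route that the cited reference \cite{less-embeddings} uses for its Lemma~25 --- exactly what the label \texttt{applying-burkholder} of this statement points to; the paper itself supplies no proof, deferring to the reference. The arithmetic is right: the increment bound $O(\deff^2/m^2)$ per row, summed over $m-1$ rows, gives $O(\deff^2/m)$, and $m\geq\Omega(\deff)$ yields the stated $O(\deff)$. (This is somewhat loose --- centering the increments would give $O(\deff/m^2)$ and hence an $O(1)$ variance bound, but $O(\deff)$ is all the lemma claims.) The one under-specified, and slightly mis-stated, step is the handling of the conditioning: you invoke both the domination $\E_{\Ec'}[\M]\preceq 2\,\E[\M]$ and the bound $\|\Q_{-i,-j_\ell}\|=O(1)$ ``on the event,'' but passing to the unconditional measure discards the event, and moreover the conditional law of $\s_{j_\ell}$ given $\Ec'$ is not the unconditional one when $j_\ell$ lands in group $1$ or $2$, so Condition~\ref{cond2} cannot be applied directly. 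The actual resolution uses the \emph{three}-group split $\Sbt_1,\Sbt_2,\Sbt_3$ from the definition of $\Ec$: with $i$ in group $3$, for any $j_\ell$ at least one of $\Ec_1,\Ec_2$ is independent of both $\s_i$ and $\s_{j_\ell}$, and that single event $\Ec_k$ already forces $\|\Q_{-i,-j_\ell}\|\leq 6$, because the untouched block gives $\U^\top\S_{-i,-j_\ell}^\top\S_{-i,-j_\ell}\U+\D\succeq\frac13\U^\top\Sbt_k^\top\Sbt_k\U+\D\succeq\frac{1-\eta}{3}\I$. One then dominates $\E_{\Ec'}[\,\cdot\,]$ by $O(1)\cdot\E_{\Ec_k}[\,\cdot\,]$ for nonnegative integrands (valid since $\Ec'\subseteq\Ec_k$), and averages over $\s_{j_\ell}$ under $\E_{\Ec_k}$, where its distribution is the unconditional one and Condition~\ref{cond2} applies. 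For $k=2$, a single Sherman--Morrison on $\Q_{-i}$ followed by squaring the rank-one perturbation yields an extra quadratic-form factor of the same order, as you anticipate.
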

\noindent
Next, note that
$|\frac{\gamma^2}{\gamma_i^2}-1|=|\gamma-\gamma_i|\cdot\frac{\gamma+\gamma_i}{\gamma_i^2}\leq
|\gamma-\gamma_i|\cdot\frac{\gamma+1}{\gamma_i}$, since $\gamma_i>1$,
so we get:
\begin{align*}
  \E_{\Ec'}\Big[\big(\x_i^\top\Q_{-i}^2\x_i\big)^2
  \Big(\tfrac{\gamma^2}{\gamma_i^2}-1\Big)^2\Big]
  &\leq 6^2(\gamma+1)^2\cdot
    \E_{\Ec'}\Big[(\x_i^\top\Q_{-i}\x_i)^2\frac{(\gamma-\gamma_i)^2}{\gamma_i^2}\Big]
  \\
  &\leq 6^2(\gamma+1)^2\cdot
    \E_{\Ec'}\Big[\frac{(\x_i^\top\Q_{-i}\x_i)^2}{(1+\frac\gamma m
    \x_i^\top\Q_{-i}\x_i)^2}(\gamma-\gamma_i)^2\Big]
  \\
  &\leq O(m^2)\cdot \E_{\Ec'}\big[(\gamma-\gamma_i)^2\big]
  \\
  &\leq O(m^2)\cdot O(\alpha \deff/m^2) = O(\alpha \deff).
\end{align*}
Finally, we analyze the first term in \eqref{eq:three-terms} as
follows:
\begin{align*}
  \big|\gammat \deff-\E_{\Ec'}[\tr(\Q_{-i}^2\Sigmab)]\big|
  &=
    \big|\tr((\E_{\Ec}-\E_{\Ec'})[\Q_{-i}^2\Sigmab]) - \tr(\T_2) +
    \tr(\gammat\Sigmab-\E_{\Ec}[\Q^2]\Sigmab)\big|
  \\
  &=
    \big|\tr((\E_{\Ec}-\E_{\Ec'})[\Q_{-i}^2\Sigmab]) - \tr(\T_2) +
    \tr((\I+\rho\Sigmab-\E_{\Ec}[\Q^2])\Sigmab)\big|
  \\
  &\leq O(\deff/m^3)
    + \deff\cdot O(\alpha\sqrt \deff/m) + |\tr(\T_7\Sigmab)|,
\end{align*}
where to bound the first term we used  the fact that
$\Pr(\neg \Ec_3\mid \Ec')\leq 1/m^3$ and for the last term, recall
that $\deff=\tr(\Sigmab)$ and $\tdeff=\tr(\Sigmab^2)$, which leads to the
following identity:
\begin{align*}
  \tr\big(\gammat\Sigmab - (\I+\rho\Sigmab)\Sigmab\big)
  &=\tr\big(\tfrac{\tdeff}{m-\tdeff}\Sigmab -
    \tfrac{\deff}{m-\tdeff}\Sigmab^2\big) = 0.
\end{align*}
Further, note that from the analysis of $\T_7$ we have:  
\begin{align*}
  |\tr(\T_7)|
  &\leq \frac \deff m \sqrt{
    4\big(\gammat \deff-\E_{\Ec'}[\tr(\Q_{-i}^2)]\big)^2 + O(\alpha \deff)}
    \cdot O(\sqrt \alpha)
  \\
  &\leq O(\alpha \deff/m)\cdot \big(|\gammat \deff-\E_{\Ec'}[\tr(\Q_{-i}^2)]
    + \sqrt \deff\big)  .
\end{align*}
Putting this together with the previous inequality, we conclude that
for sufficiently large $m$: 
\begin{align*}
  \big|\gammat \deff-\E_{\Ec'}[\tr(\Q_{-i}^2)]\big|\leq \frac{O(\alpha\sqrt
  \deff)}{1-O(\alpha \deff/m)} = O(\alpha\sqrt \deff).
\end{align*}
Plugging this back into the analysis of $\|\T_7\|$, we can bound it by
$O(\alpha\sqrt d/m)$,  which concludes the proof.

% \subsection{Proof of   Lemma \ref{l:applying-burkholder}}
% Let $\Q_{-ij}$ denote the matrix
% $(\U^\top\S_{-ij}^\top\S_{-ij}\U+\D)^{-1}$ where $\S_{-ij}$ is the
% matrix $\S$ without the $i$th and $j$th rows. The proof follows
% identically as the proof of Lemma 21 in \cite{less-embeddings} with the only difference
% being that instead of having to bound $\tr(\Q_{-ij}-\Q_{-i})$
% conditioned on event $\Ec_1$, we bound $\tr(\Q_{-ij}^2-\Q_{-i}^2)$,
% obtaining:
% \begin{align*}
%   \tr(\Q_{-ij}^2-\Q_{-i}^2)\leq
%   \frac{  \frac{2\gamma}{m}\x_j^\top\Q_{-ij}^3\x_j}{1+\frac\gamma
%   m\x_j\Q_{-ij}\x_j}
%   \leq 2\cdot 6^2\cdot
%   \frac{\frac\gamma m\x_j\Q_{-ij}\x_j}{1+\frac \gamma
%   m\x_j\Q_{-ij}\x_j}= O(1).
% \end{align*}

\vspace{-3mm}
\section{Local Convergence Rate of Newton-LESS}
\label{a:convergence}
In this section, we present the convergence analysis of Newton
Sketch for sketching matrices satisfying the structural conditions of
Theorem \ref{t:structural}. We start by proving Lemma
\ref{l:regularized-decomposition}, then we show how it can be used to
establish the guarantee from Theorem
\ref{t:regularized-full}. Finally, we discuss how the analysis needs
to be adjusted to obtain the two-sided bound from Theorem
\ref{t:main-simple}.

\vspace{-3mm}
\subsection{Proof of Lemma \ref{l:regularized-decomposition}}
  Let $\Deltat_t=\xbt_t-\x^*$, $\Delta_{t+1}=\x_{t+1}-\x^*$, and
  $\p_t=\x_{t+1}-\x_t$. Also, define $\rho=\frac{\deff}{m-\tdeff}$ as
  well as the matrices
$\Qbt =
\H_t^{\frac12}(\Af{f_0}{\xbt_t}^\top\S_t^\top\S_t\Af{f_0}{\xbt_t}+\nabla^2
g(\xbt_t))^{-1}\H_t^{\frac12}$ and
$\U_t=\Af{f_0}{\xbt}\H_t^{-\frac12}$. We have:
  \begin{align*}
    \E_{\Ec}\,\|\Deltat_{t+1}\|_{\H_t}^2-\|\Delta_{t+1}\|_{\H_t}^2
    &=    
    2\Delta_{t+1}^\top\H_t\E_{\Ec}[\xbt_{t+1}-\x_{t+1}] +
    \E_{\Ec}\,\|\xbt_{t+1}-\x_{t+1}\|_{\H_t}^2
    \\
    &=
      2\Delta_{t+1}^\top\H_t^{\frac12}(\I-\E_{\Ec}\Qbt)\H_t^{\frac12}\p_t
      +\p_t^\top\H_t^{\frac12}\E_{\Ec}(\I-\Qbt)^2\H_t^{\frac12}\p_t
    \\
&\leq       \rho\cdot\p_t^\top\H_t^{\frac12}\U_t^\top\U_t\H_t^{\frac12}\p_t
                +
     2 \|\I-\E_{\Ec}\Qbt\|\cdot\big(\|\Delta_{t+1}\|_{\H_t}\|\p_t\|_{\H_t}+\|\p_t\|_{\H_t}^2\big)
    \\
    &\quad+\|\I+\rho\U_t^\top\U_t-
      \E_{\Ec}\Qbt^2\|\cdot\|\p_t\|_{\H_t}^2
    \\
&\leq \rho\,\|\p_t\|_{\nabla^2 f_0(\xbt_t)}^2 + O\big(\tfrac{\sqrt
         \deff}{m}\big) \|\Deltat_t\|_{\H_t}^2,
  \end{align*}
  where the last step follows by applying Theorem \ref{t:structural} and observing that
  $\|\p_t\|_{\H_t}\leq\|\Deltat_t\|_{\H_t}+\|\Delta_{t+1}\|_{\H_t}\leq
  2\|\Deltat_t\|_{\H_t}$ and
  $\|\Delta_{t+1}\|_{\H_t}\|\p_t\|_{\H_t}\leq
  \frac12(\|\Delta_{t+1}\|_{\H_t}^2+\|\p_t\|_{\H_t}^2)\leq
  3\|\Deltat_t\|_{\H_t}^2$. The matching lower-bound follows identically.

\vspace{-3mm}
  \subsection{Proof of Theorem \ref{t:regularized-full}}
We start by analyzing the exact Newton step 
  $\x_{t+1}=\xbt_t-\mu_t\H_t^{-1}\g_t$ wih step size $\mu_t$, gradient
  $\g_t=\nabla 
  f(\xbt_t)$, and Hessian $\H_t=\nabla^2 f(\xbt_t)$. Letting
$\Deltat_t=\xbt_t-\x^*$ and $\Delta_{t+1}=\x_{t+1}-\x^*$, we have:
  \begin{align*}
    \|\Delta_{t+1}\|_{\H_t}^2
    &=  (1-\mu_t)\Delta_{t+1}^\top\g_t
      +\Delta_{t+1}^\top(\H_t\Deltat_t-\g_t)
    \\
    &=(1-\mu_t)\Delta_{t+1}^\top\H_t\Deltat_t -
      (1-\mu_t)\Delta_{t+1}^\top(\H_t\Deltat_t-\g_t)
      +\Delta_{t+1}^\top(\H_t\Deltat_t-\g_t)
    \\
    &
 =(1-\mu_t)\big(\Deltat_t^\top\H_t\Deltat_t -
      \mu_t\g_t^\top\Deltat_t\big) + \mu_t \Delta_{t+1}^\top(\H_t\Deltat_t-\g_t)
     \\
    &= (1-\mu_t)^2\|\Deltat_t\|_{\H_t}^2
+\mu_t\big(\Delta_{t+1}+(1-\mu_t)\Deltat_t\big)^\top(\H_t\Deltat_t-\g_t).
  \end{align*}
Before we proceed, we make the following assumptions,
  which will be addressed later.
  \begin{align}
    \text{Assume:}\qquad \|\H_t\Deltat_t-\g_t\|_{\H_t^{-1}}\leq
    \epsilon\beta\|\Deltat_t\|_{\H_t},
    \qquad \H_t\approx_{\epsilon} \H,\label{eq:assumptions}
  \end{align}
  where $\epsilon=O(\frac1{\sqrt\deff})$ and $\beta = \frac\rho{1+\rho}$ will become the
  convergence rate of Newton-LESS, and recall that
  $\rho=\frac{\deff}{m-\tdeff}$. Now, using the Cauchy-Schwartz
  inequality we obtain that:
  \begin{align*}
    \|\Delta_{t+1}\|_{\H_t}^2
    &\leq
       (1-\mu_t)^2\|\Deltat_t\|_{\H_t}^2
      +\mu_t\|\Delta_{t+1}+(1-\mu_t)\Deltat_t\|_{\H_t}\|\H_t\Deltat_t-\g_t\|_{\H_t^{-1}}
    \\
    &\leq (1-\mu_t)^2\|\Deltat_t\|_{\H_t}^2
      +\epsilon\beta\mu_t\|\Delta_{t+1}\|_{\H_t}\|\Deltat_t\|_{\H_t} +
      \epsilon\beta\mu_t(1-\mu_t)\|\Deltat_t\|_{\H_t}^2. 
  \end{align*}
  Solving for $\|\Delta_{t+1}\|_{\H_t}$, we obtain the following
  upper bound:
  \begin{align*}
    \|\Delta_{t+1}\|_{\H_t}^2
    &\leq
    2(1-\mu_t)^2\|\Deltat_t\|_{\H_t}^2+2\epsilon\beta\mu_t(1-\mu_t)\|\Deltat_t\|_{\H_t}^2
      +\epsilon^2\beta^2\mu_t^2\|\Deltat_t\|_{\H_t}^2
    \\
    &\leq2\big((1-\mu_t)^2 +\epsilon\beta\mu_t\big)\|\Deltat_t\|_{\H_t}^2.
  \end{align*}
  Setting $\mu_t=\frac1{1+\rho}$, we conclude that
$\|\Delta_{t+1}\|_{\H_t}^2\leq
\frac{2\rho}{1+\rho}\frac{\rho+\epsilon}{1+\rho}\|\Deltat_t\|_{\H_t}^2\leq\beta
\|\Deltat_t\|_{\H_t}^2$ when $m\geq 4\deff+2$ and $\epsilon<1/4$.
  Next, we return to the Newton Sketch. Recall that using
  Lemma~\ref{l:regularized-decomposition} with the event $\Ec$ having
  failure probability $\delta/T$, we have: 
  \begin{align}
    \E_{\Ec}\,\|\Deltat_{t+1}\|_{\H_t}^2
    -\|\Delta_{t+1}\|_{\H_t}^2
    &=
\rho\|\p_t\|_{\nabla^2 f_0(\xbt_t)}^2 \pm O\big(\tfrac{\sqrt
      \deff}{m}\big)\|\Deltat_t\|_{\H_t}^2\nonumber
    \\
    &\leq \rho\|\p_t\|_{\H_t}^2 + O\big(\tfrac{\sqrt
      \deff}{m}\big)\|\Deltat_t\|_{\H_t}^2,\label{eq:upper-not-lower}
  \end{align}
  where we also used the fact that $\nabla^2f_0(\xbt_t)\preceq\H_t$.
  The leading term in the above decomposition can be written as follows:
  \begin{align*}
\rho\|\p_t\|_{\H_t}^2  &=\rho\mu_t^2\,(\g_t\Deltat_t
      -\g_t^\top\H_t^{-1}(\H_t\Deltat_t-\g_t))
    \\
    &=\rho\mu_t^2\,(\Deltat_t^\top\H_t\Deltat_t -
      \Deltat_t^\top(\H_t\Deltat_t-\g_t)-\g_t^\top\H_t^{-1}(\H_t\Deltat_t-\g_t))
    \\
    &=\rho\mu_t^2\|\Deltat_t\|_{\H_t}^2 -
      \rho\mu_t^2(\Deltat_t+\H_t^{-1}\g_t)^\top(\H_t\Deltat_t-\g_t).
  \end{align*}
  Putting everything together, and then setting $\mu_t=\frac1{1+\rho}$, we obtain:
  \begin{align*}
    \E_{\Ec}\,\|\Deltat_{t+1}\|_{\H_t}^2
         & \leq\Big((1-\mu_t)^2 +
           \rho\mu_t^2 + O\big(\tfrac{\sqrt\deff}m\big)\Big)\|\Deltat_t\|_{\H_t}^2
    \\
          &\quad+\big((1+\rho)\mu_t(\Delta_{t+1}-\Deltat_t)+(1-\rho\mu_t^2)\Deltat_t
            \big)^\top(\H_t\Deltat_t-\g_t)
    \\
         &=\Big(\beta+O\big(\tfrac{\sqrt\deff}m\big)\Big)\|\Deltat_t\|_{\H_t}^2
           + \big(\Delta_{t+1} - \beta\mu_t\Deltat_t\big)^\top (\H_t\Deltat_t-\g_t).
  \end{align*}
We can bound the second term by using Cauchy-Schwartz, the first
assumption in \eqref{eq:assumptions} and $\mu_t\leq 1$:
\begin{align*}
\big| \big(\Delta_{t+1} - \beta\mu_t\Deltat_t\big)^\top
  (\H_t\Deltat_t-\g_t)\big|
  &\leq \epsilon\beta\|\Delta_{t+1}\|_{\H_t}\|\Deltat_t\|_{\H_t}+
    \epsilon\beta^2\|\Deltat_t\|_{\H_t}^2
    \leq 2\epsilon\beta\|\Deltat_t\|_{\H_t}^2.
\end{align*}
Combining this with the assumption $\H_t\approx_{\epsilon} \H$,
which implies that $\|\v\|_{\H_t}^2\approx_\epsilon\|\v\|_{\H}^2$, we obtain:
\begin{align}
  \E_{\delta/T}\,\frac{\|\widetilde\Delta_{t+1}\|_{\H}^2}{\|\Deltat_t\|_{\H}^2}
 \leq\beta\cdot\Big(1+O\big(\tfrac1{\sqrt\deff}\big)\Big).\label{eq:step-t}
\end{align}
Note that since $\rho=\frac{\deff}{m-\tdeff}$, we have $\beta= \frac
\deff{m+\deff-\tdeff}$ and $\mu_t=\frac{\deff}{m+\deff-\tdeff}$. Alternatively, if throughout the analysis we
use $\rho=\frac\deff{m-\deff}\leq \frac{\deff}{m-\tdeff}$, then we
obtain the simpler (and slightly weaker) convergence rate $\beta =
\frac{\deff}m$ with step size $\mu_t=1-\frac\deff m$, 
as in Theorem~\ref{t:regularized-simple}.

It remains to
address the assumptions from \eqref{eq:assumptions}, and then carefully chain
the expectations together. Next, we define the 
neighborhood $U$ in which we can establish our convergence guarantee,
and show that when the iterate lies in the neighborhood, then
\eqref{eq:assumptions} is satisfied. This part of the proof
will depend on what type of function $f(\x)$ we are minimizing.

\paragraph{Lipschitz Hessian.} Suppose that function $f(\x)$ has a
Lipschitz continuous Hessian with constant~$L$ (Assumption \ref{a:lipschitz}). We define the
neighborhood $U$ through the following condition:
\[\|\Deltat_t\|_{\H}<\frac{\sqrt\deff}{m}\frac{(\lambda_{\min})^{3/2}}{L},\]
where $\lambda_{\min}$
denotes the smallest eigenvalue of $\H$. Suppose that the
condition holds for some $t$. Then, we have:
\begin{align*}
\|\H^{-\frac12}(\H_t-\H)\H^{-\frac12}\|\leq
  \frac1{\lambda_{\min}}\|\H_t-\H\|\leq
  \frac L{\lambda_{\min}}\|\Deltat_t\|\leq
    \frac L{(\lambda_{\min})^{3/2}}\|\Deltat_t\|_{\H}\leq
\frac{\sqrt\deff}{m}\leq \epsilon,
\end{align*}
for $\epsilon=O(\frac1{\sqrt\deff})$, showing that $\H_t\approx_\epsilon \H$. In particular, this implies
that $\|\H_t^{-1}\|\geq \frac1{\lambda_{\min} (1-\epsilon)}$. To get
the second assumption in \eqref{eq:assumptions}, we first follow 
standard analysis of the Newton's method \cite{boyd2004convex}:
\begin{align*}
  \|\H_t\Deltat_t-\g_t\|
  &=
    \bigg\|\H_t\Deltat_t-\Big(\int_0^1\nabla^2f(\x^*+\tau\Deltat_t)
d\tau\Big)\Deltat_t\bigg\|
    \\
  &\leq
    \|\Deltat_t\|\cdot
    \int_0^1\big\|\nabla^2 f(\xbt_t)-\nabla^2f(\x^*+\tau\Deltat_t)\big\| d\tau
  \\
  &\leq  \|\Deltat_t\|\cdot \int_0^1(1-\tau)L\|\Deltat_t\| d\tau
    \leq \frac L2\|\Deltat_t\|^2.
\end{align*}
Then, we simply use the fact that $\|\H_t^{-1}\|\geq
\frac1{\lambda_{\min} (1-\epsilon)}$ to conclude:
\begin{align*}
  \|\H_t\Deltat_t-\g_t\|_{\H_t^{-1}}
  &\leq
  \frac1{\sqrt{\lambda_{\min}(1-\epsilon)}} \|\H_t\Deltat_t-\g_t\|
  \\
  &\leq \frac1{\sqrt{\lambda_{\min}(1-\epsilon)}}\,\frac
    L2\|\Deltat_t\|^2
  \\
  &\leq\frac{L}{(\lambda_{\min})^{3/2}}\|\Deltat_t\|_{\H}^2
  \leq  \epsilon\beta\|\Deltat_t\|_{\H_t},
\end{align*}
since $\beta = O(\frac{\deff}{m})$,
thus establishing the assumptions from \eqref{eq:assumptions}. 

\paragraph{Self-concordant function.} Suppose that function $f(\x)$ is
self-concordant (Assumption \ref{a:self-concordant}). We will define the neighborhood $U$ through the
following condition:
\begin{align*}
  \|\Deltat_t\|_{\H}<\frac{\sqrt\deff}{m}.
\end{align*}
Now, using a standard property of self-concordant functions
\cite[Chapter 9]{boyd2004convex}, we have:
\begin{align*}
  (1-\|\Deltat_t\|_{\H})^2\,\H\preceq\H_t\preceq\frac1{(1-\|\Deltat_t\|_{\H})^2}\,\H,
\end{align*}
and note that $\frac1{(1-\|\Deltat_t\|_{\H})^2} \leq 1+\epsilon$ for
$\epsilon=O(\frac1{\sqrt\deff})$, so it
follows that $\H_t\approx_{\epsilon}\H$. Furthermore, for
self-concordant functions, it follows that:
\begin{align*}
  \|\H_t\Deltat_t-\g_t\|_{\H_t^{-1}}
  &=\bigg\|\H_t^{\frac12}\Deltat_t -
    \H_t^{-\frac12}\Big(\int_0^1\nabla^2f(\x^*+\tau\Deltat_t)d\tau\Big)
    \Deltat_t\bigg\|
  \\
  &=\bigg\|\Big(\int_0^1\big(\I-\H_t^{-\frac12}\nabla^2f(\x^*+\tau\Deltat_t)
    \H_t^{-\frac12}\big)d\tau\Big)\H_t^{\frac12}\Deltat_t\bigg\|
  \\
  &\leq
    \|\Deltat_t\|_{\H_t}\cdot\int_0^1\big\|\I-\H_t^{-\frac12}\nabla^2f(\x^*+\tau\Deltat_t)
    \H_t^{-\frac12}\big\|d\tau
  \\
  &\leq
    \|\Deltat_t\|_{\H_t}\cdot
    \int_0^1\frac{1}{(1-\tau\|\Deltat_t\|_{\H_t})^2} d\tau
= \frac{\|\Deltat_t\|_{\H_t}^2}{1-\|\Deltat_t\|_{\H_t}}.
\end{align*}
Using the neighborhood condition, we conclude that
$\frac{\|\Deltat_t\|_{\H_t}^2}{1-\|\Deltat_t\|_{\H_t}}
\leq O(\frac{\sqrt\deff}m)\|\Deltat_t\|_{\H_t}\leq \epsilon\beta \|\Deltat_t\|_{\H_t}$.

% fact
% that $\|\Deltat_t\|_{\H_t}\leq
% \frac{\|\Deltat_t\|_{\H}}{1-\|\Deltat_t\|_{\H}}$, and following
% standard Newton analysis for self-concordant functions
% \cite{boyd2004convex}, as long as $m\geq 4\deff$ we have: 
% \begin{align*}
%   \|\H_t\Deltat_t-\g_t\|_{\H_t^{-1}}\leq
%   \frac{\|\Deltat_t\|_{\H_t}^2}{1-\|\Deltat_t\|_{\H_t}}\leq
%   \frac{\frac{\|\Deltat_t\|_{\H}}{1-\|\Deltat_t\|_{\H}}}{1-\frac{\|\Deltat_t\|_{\H}}{1-\|\Deltat_t\|_{\H}}}
%   \cdot\|\Deltat_t\|_{\H_t}=\frac{\|\Deltat_t\|_{\H}}{1-2\|\Deltat_t\|_{\H}}\cdot\|\Deltat_t\|_{\H_t}
%   \leq \epsilon\beta\|\Deltat_t\|_{\H_t}.
% \end{align*}
\paragraph{Chaining the expectations.}
 Let $\Ec_t$ denote the high-probability event corresponding to the
conditional expectation in \eqref{eq:step-t} for the iteration $t$. It remains to show that
after conditioning on event $\Ec=\bigwedge_{t=0}^{T-1}\Ec_t$,
we maintain that $\xbt_t\in U$ for all $t$. Assume that this holds for $t=0$. Then,
it suffices to show that $\|\Deltat_{t+1}\|_{\H}\leq \|\Deltat_{t}\|_{\H}$
for every $t$ almost surely (conditioned on $\Ec$). Recall that
Theorem \ref{t:structural} implies that conditioned on $\Ec_t$ we have
\begin{align*}
  \|\I-\Qbt\|\leq \eta,
\end{align*}
where Lemma~\ref{l:structural-less} ensures that $\eta$ is small.
We use this to show the following coarse convergence guarantee that
holds almost surely conditioned on $\Ec_t$, but is substantially
weaker than $\beta$. First, note that using
the derivation as in the proof of Lemma
\ref{l:regularized-decomposition} and the analysis of the exact Newton
step, 
\begin{align*}
  \|\Deltat_{t+1}\|_{\H_t}^2 
  &\leq \|\Delta_{t+1}\|_{\H_t}^2+ O\big(\|\I-\Qbt\|\big)\cdot
    \|\Deltat_t\|_{\H_t}^2
  \\
  &\leq \big(\beta + O(\eta)\big)\cdot\|\Deltat_t\|_{\H_t}^2.
\end{align*}
Using a sufficiently large constant $C$ in
Lemma~\ref{l:structural-less} so that $\beta+O(\eta)$ is small enough,
and given the assumption $\H_t\approx_\epsilon\H$, we obtain
 $\|\Deltat_{t+1}\|_{\H}^2\leq \|\Deltat_t\|_{\H}^2$.
Thus, we conclude that
all of the iterates will lie in the neighborhood $U$, and so
\eqref{eq:step-t} will hold for all $t=0,1,...,T-1$. Finally, note
that by the union bound, event $\Ec$ holds with probability
$1-\delta$, which completes the proof.

\paragraph{Lower-bound from Theorem \ref{t:main-simple}}
The matching lower-bound from Theorem \ref{t:main-simple} holds only
in the unregularized setting. In this case, we have
$\nabla^2f_0(\xbt_t)=\H_t$, so instead of an inequality in
\eqref{eq:upper-not-lower}, we can obtain a two-sided approximation. The rest
of the proof proceeds identically.

\vspace{-3mm}
\section{Sketches Satisfying Structural Conditions
  (Proof of Lemma \ref{l:structural-less})} 
\label{a:structural-less}

In this section, we prove Lemma \ref{l:structural-less}, showing that sub-Gaussian, LESS, and LESS-uniform
embeddings all satisfy the assumptions of Theorem \ref{t:structural},
which are derived from Conditions \ref{cond1} and \ref{cond2}. This
analysis follows along similar lines as in \cite{less-embeddings},
except for extending LESS embeddings to LESS-uniform, and allowing for
the presence of regularization.

\vspace{-3mm}
\subsection{Sub-Gaussian embeddings}

For sub-Gaussian embeddings, both conditions follow from existing
results. To establish Condition \ref{cond1}, we rely on a covariance
estimation result of \cite{koltchinskii2017concentration}, stated as
in \cite{mendelson2020robust}. Here, we say that a random vector $\x$
is sub-Gaussian if $\v^\top\x$ is a sub-Gaussian variable for all unit
vectors $\v$.
\begin{lemma}[{\cite[Theorem~1.4]{mendelson2020robust}}]\label{lem:cov-concentration}
For $i \in \{1,\ldots, m\}$, let $\x_i \in \R^{n}$ be independent
sub-Gaussian random vectors such that $\E[\x_i] = 0$ and
$\E[\x_i \x_i^\top] = \Sigmab$. Then, it holds with
probability at least $1 - 2\exp(-t^2)$ that 
\[
	\left\| \frac1m \sum_{i=1}^m \x_i \x_i^\top - \Sigmab \right\|
        \le C \| \Sigmab \| \left( \sqrt{ \frac{\tr \Sigmab / \|
              \Sigmab \|}{ m} } + \frac{\tr \Sigmab / \| \Sigmab
            \|}{m}  + \frac{t}{\sqrt m} + \frac{t^2}m \right). 
\]
\end{lemma}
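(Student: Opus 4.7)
Since this final lemma is cited verbatim from Mendelson--Zhivotovskiy, the plan is to outline how one would establish such a sharp covariance-estimation bound from first principles. The starting point is the variational identity
$$\left\|\frac{1}{m}\sum_{i=1}^m \x_i\x_i^\top - \Sigmab\right\| = \sup_{\v \in S^{n-1}}\left|\frac{1}{m}\sum_{i=1}^m (\v^\top\x_i)^2 - \v^\top\Sigmab\v\right|,$$
so the quantity of interest is the supremum of an empirical process of sub-exponential summands indexed by the unit sphere.

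For any fixed direction $\v$, the random variables $Z_i(\v) := (\v^\top\x_i)^2 - \v^\top\Sigmab\v$ are centered and sub-exponential, with sub-exponential norm of order $\v^\top\Sigmab\v \leq \|\Sigmab\|$ (this uses that $\v^\top\x_i$ is sub-Gaussian with variance proxy at most a constant multiple of $\|\Sigmab\|$, and the square of a sub-Gaussian is sub-exponential). Bernstein's inequality then yields a pointwise deviation of order $\|\Sigmab\|(t/\sqrt{m} + t^2/m)$ with probability at least $1-2\exp(-t^2)$. This accounts for the last two terms $t/\sqrt{m}$ and $t^2/m$ in the claimed bound.

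The main technical difficulty is to obtain the additional $\sqrt{r/m}+r/m$ term, with $r = \tr\Sigmab/\|\Sigmab\|$, uniformly over the sphere without losing a $\sqrt{n/m}$ factor that a naive $\epsilon$-net argument would produce. The route I would take is generic chaining (Talagrand's $\gamma_2$ functional) with respect to the metric $d(\u,\v)^2 = (\u-\v)^\top\Sigmab(\u-\v)$: the image $\Sigmab^{1/2}(S^{n-1})$ is an ellipsoid whose $\gamma_2$ functional is controlled by $\sqrt{\tr\Sigmab}$, and combining this with an $\ell_\infty$-mixed-tail chaining inequality of the Dirksen / Mendelson--Zhivotovskiy type delivers precisely the effective-rank scaling. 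An equivalent route is to reduce to the isotropic case by setting $\y_i = \Sigmab^{+/2}\x_i$ (projecting onto the range of $\Sigmab$) and then apply an intrinsic-dimension matrix Bernstein inequality; either way, the key qualitative point is that only directions along the large eigenvectors of $\Sigmab$ contribute significantly, so the relevant complexity is $r$ rather than the ambient dimension $n$.

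The hardest step will be the uniform control: matching the sub-Gaussian quadratic tails (pointwise Bernstein/Hanson--Wright) against the geometric complexity of the sphere in the $\Sigmab$-metric, in such a way that the leading $\sqrt{r/m}$ term has the right constant and the lower-order $r/m$ term is not polluted by logarithmic factors in $n$. Removing those spurious logarithms is what distinguishes the Koltchinskii--Lounici and Mendelson--Zhivotovskiy bounds from earlier covering-number based arguments, and is the core reason why a chaining (rather than union bound) argument is necessary.
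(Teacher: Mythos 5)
The paper does not prove this lemma; it imports it verbatim from \cite[Theorem~1.4]{mendelson2020robust}, so there is no internal argument to compare against. Your outline is a fair high-level account of how the cited result is actually established: the reduction to an empirical process over $S^{n-1}$, pointwise sub-exponential (Bernstein/Hanson--Wright) control giving the $t/\sqrt m + t^2/m$ tail, and generic chaining in the $\Sigmab$-metric to obtain the effective-rank terms $\sqrt{r/m}+r/m$ with $r=\tr\Sigmab/\|\Sigmab\|$ and without spurious $\log n$ factors. One caveat: the two ``equivalent'' routes you mention are not quite interchangeable. Whitening by $\y_i=\Sigmab^{+/2}\x_i$ followed by an intrinsic-dimension matrix Bernstein inequality (Tropp's framework) would reintroduce a logarithmic factor in the effective rank, which is exactly the loss that the Koltchinskii--Lounici and Mendelson--Zhivotovskiy arguments are designed to avoid; the chaining route is the one that actually delivers the stated bound. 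So treat the chaining argument as the genuine proof and the matrix-Bernstein route only as giving a weaker (log-lossy) version. Since this is a cited black-box lemma, none of this affects the correctness of the paper's use of it.
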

Now, to establish Condition \ref{cond1} for the sub-Gaussian sketching
matrix
$\S$, i.e., where the $m$ rows are distributed as
$\frac1{\sqrt{m-\deff}}\s_i^\top$ for $\s_i$ having
i.i.d. zero mean, unit variance and sub-Gaussian entries, we let $\x_i=\U^\top\s_i$. Recall
that $\U=\A\H^{-\frac12}$ for $\H=\A^\top\A+\C$, so $\|\U\|\leq 1$, so
it follows that $\x_i$ is a sub-Gaussian random vector. Therefore,
letting $\gamma = \frac{m}{m-\deff}$ and $\Sigmab = \U^\top\U$, we
have $\E[\x_i\x_i^\top]=\Sigmab$ and with probability $1-\delta$: 
\begin{align*}
  \|\U^\top\S^\top\S\U-\Sigmab\|
  &\leq
  \gamma\cdot\Big\|\frac1m\sum_{i=1}^m\x_i\x_i^\top - \Sigmab\Big\|
  + (\gamma-1)\cdot\|\Sigmab\|
  \\
  &\leq C\bigg(\sqrt{\frac{\deff}{m}} +
    \sqrt{\frac{\log(1/\delta)}{m}}\bigg) + \frac{\deff}{m-\deff},
\end{align*}
thus setting $m\geq O(1)\cdot(\deff+\log(1/\delta))/\eta^2$, we can
bound the above by $\eta$, obtaining Condition \ref{cond1}.

To show Condition \ref{cond2} for sub-Gaussian embeddings, we can
again rely on a more general moment bound for quadratic
forms, which is a special case of Lemma B.26 in \cite{bai2010spectral}.
\begin{lemma}[\cite{bai2010spectral}]\label{l:bai-silverstein}
Let $\M$ be a $n\times n$ matrix, and let $\x$ be an $n$-dimensional
random vector with independent, mean zero, unit variance entries such
that $\E[x_i^4] = O(1)$. Then,
\begin{align*}
\Var[\x^\top\M\x] \leq O(1)\cdot\tr(\M\M^\top).
\end{align*}
\end{lemma}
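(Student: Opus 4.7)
My plan is a direct second-moment calculation. I would expand the quadratic form as $\x^\top\M\x=\sum_{i,j}M_{ij}x_ix_j$, so that $(\x^\top\M\x)^2=\sum_{i,j,k,l}M_{ij}M_{kl}\,x_ix_jx_kx_l$, and then use independence of the coordinates to compute $\E[(\x^\top\M\x)^2]$ by enumerating which index patterns contribute.

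The key step is the fourth-moment identity for independent, mean-zero, unit-variance coordinates: by independence, $\E[x_ix_jx_kx_l]$ vanishes unless every index appears an even number of times, which (after correcting for overlap with the all-equal case) yields $\E[x_ix_jx_kx_l]=\delta_{ij}\delta_{kl}+\delta_{ik}\delta_{jl}+\delta_{il}\delta_{jk}+(\E[x_i^4]-3)\,\mathbf{1}[i=j=k=l]$. Contracting against $M_{ij}M_{kl}$, the three pair-matchings yield $\tr(\M)^2$, $\|\M\|_F^2$, and $\tr(\M^2)$ respectively, while the diagonal correction contributes $\sum_i(\E[x_i^4]-3)M_{ii}^2$. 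Since $\E[\x^\top\M\x]=\tr(\M)$, subtracting its square leaves $\Var[\x^\top\M\x]=\|\M\|_F^2+\tr(\M^2)+\sum_i(\E[x_i^4]-3)M_{ii}^2$.

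Each piece is then dominated by $\tr(\M\M^\top)=\|\M\|_F^2$: the first equals it exactly; $|\tr(\M^2)|\leq\|\M\|_F^2$ follows from Cauchy--Schwarz applied to $\langle\M,\M^\top\rangle_F$; and $\sum_iM_{ii}^2\leq\|\M\|_F^2$ since the diagonal entries form a subset of all entries of $\M$. Combined with the hypothesis $\E[x_i^4]=O(1)$, which bounds the cumulant prefactor, this gives the claimed $\Var[\x^\top\M\x]\leq O(1)\cdot\tr(\M\M^\top)$. The only nontrivial aspect is the bookkeeping in the fourth-moment identity, so as not to double-count the diagonal contribution; beyond that, no concentration inequality is required.
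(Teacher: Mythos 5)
Your computation is correct. The fourth-moment identity
\[
\E[x_ix_jx_kx_l]=\delta_{ij}\delta_{kl}+\delta_{ik}\delta_{jl}+\delta_{il}\delta_{jk}+(\E[x_i^4]-3)\,\mathbf{1}[i=j=k=l]
\]
checks out on all index patterns (distinct, one pair, two pairs, three equal, all equal), the contractions against $M_{ij}M_{kl}$ produce $\tr(\M)^2$, $\|\M\|_F^2$, $\tr(\M^2)$, and the diagonal-kurtosis correction respectively, and subtracting $(\E[\x^\top\M\x])^2=\tr(\M)^2$ leaves exactly the three terms you list. Each is dominated by $\tr(\M\M^\top)=\|\M\|_F^2$ via the arguments you give (identity, Cauchy--Schwarz on $\langle\M,\M^\top\rangle_F$, and the trivial $\sum_iM_{ii}^2\le\|\M\|_F^2$ with the $O(1)$ kurtosis hypothesis).

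Note that the paper does not supply its own proof of this lemma; it cites Lemma~B.26 of Bai and Silverstein, which establishes a more general $p$th-moment bound on $|\x^\top\M\x-\tr(\M)|$ via moment inequalities for martingale difference sequences. The variance statement quoted here is the special case $p=2$, for which that machinery is unnecessary and the direct Wick-type computation you carried out is both simpler and completely self-contained. One small point worth being explicit about when you write it up: the kurtosis term $\sum_i(\E[x_i^4]-3)M_{ii}^2$ may be negative, which only helps the upper bound; for the absolute bound you should invoke $|\E[x_i^4]-3|\le\E[x_i^4]+3=O(1)$.
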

To obtain Condition \ref{cond2}, we simply set $\x=\s$ and
$\M=\U\B\U^\top$. Note that since $\|\U\|\leq1$, we have
$\tr(\M\M^\top) = \tr(\U\B\U^\top\U\B\U^\top)\leq \tr(\U\B^2\U^\top)$.

\vspace{-3mm}
\subsection{LESS embeddings: Condtion \ref{cond1}}

Now, we demonstrate that Condition \ref{cond1} also
holds for LESS embeddings. We will use 
the following matrix concentration inequality which is a
straightforward combination of two standard results.
\begin{lemma}[{\cite[Theorem~6.2]{matrix-tail-bounds} and \cite[Theorem 7.7.1]{tropp2015introduction}}]\label{l:bernstein}
For $i=1,2,...$, consider a finite sequence $\X_i$ of $d\times d$
independent symmetric random matrices such that
$\E[\X_i]=\mathbf{0}$, and one of the following holds for all $i$:
\begin{enumerate}
\item $\E[\X_i^p] \preceq \frac{p!}2 \cdot R^{p-2}
  \A_i^2$\quad for\quad $p=2,3,...$;
  \item $\|\X_i\|\leq R$\quad and\quad $\E[\X_i^2]\preceq \A_i^2$.
\end{enumerate}
Then, defining the variance matrix $\V=\sum_i\A_i^2$,
parameter $\sigma^2 = \|\V\|$ and $\deff=\tr(\V)/\|\V\|$, for any $t\geq\sigma+R$ we have:
\begin{align*}
	\Pr \bigg\{ \lambda_{\max}\Big( \sum\nolimits_i \X_i \Big) \geq t
  \bigg\}& \leq 4\deff \cdot \exp \left( \frac{ -t^2/2 }{ \sigma^2 +
            R t } \right).
\end{align*}
\end{lemma}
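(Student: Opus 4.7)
The plan is to reduce the tail probability to a matrix moment generating function estimate via the Laplace-transform method, and then replace the ambient dimension $d$ in the resulting Chernoff bound by the intrinsic dimension $\deff=\tr(\V)/\|\V\|$ using Tropp's refinement. Both cases of the hypothesis can be routed through a single Bernstein MGF bound, so I would handle them in a unified way.

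First I would establish the unified Bernstein-type MGF bound: for $\theta\in(0,1/R)$,
\[
\E[\exp(\theta\X_i)]\ \preceq\ \exp\!\Big(\tfrac{\theta^2/2}{1-\theta R}\,\A_i^2\Big).
\]
Under hypothesis (1), this follows from the series expansion $\E[\exp(\theta\X_i)]=\I+\sum_{p\ge 2}\tfrac{\theta^p}{p!}\E[\X_i^p]$, the moment bound, and the geometric series $\sum_{p\ge 2}(\theta R)^{p-2}=(1-\theta R)^{-1}$, followed by the psd inequality $\I+\M\preceq\exp(\M)$. Under hypothesis (2), the a.s.\ bound $\|\X_i\|\le R$ combined with $\E[\X_i^2]\preceq\A_i^2$ yields $\E[\X_i^p]\preceq R^{p-2}\A_i^2$ for all $p\ge 2$, which falls under (1) up to an absorbable constant.

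Next, by Lieb's concavity theorem, $\E\big[\tr\exp\!\big(\theta\sum_i\X_i\big)\big]\le\tr\exp\!\big(g(\theta)\V\big)$ where $g(\theta)=\tfrac{\theta^2/2}{1-\theta R}$. To obtain the intrinsic-dimension prefactor, I would invoke Tropp's Theorem 7.7.1. The key scalar inequality is that, for any psd $\M$,
\[
\tr(\exp(\M)-\I)\ \le\ \tfrac{\tr(\M)}{\|\M\|}\cdot(\exp(\|\M\|)-1),
\]
which follows from the convexity of $e^x-1$ on $[0,\|\M\|]$ applied eigenvalue-wise. Combining this with Markov applied to the monotone function $\phi(x)=\max(0,e^x-1)$ and the standard estimate $\phi(\lambda_{\max}(\M))\le\tr(\phi(\M))$ gives
\[
\Pr\!\big\{\lambda_{\max}\big(\textstyle\sum_i\X_i\big)\ge t\big\}\ \le\ \deff\cdot\frac{\exp(g(\theta)\sigma^2)-1}{\exp(\theta t)-1}.
\]
Optimising $\theta=t/(\sigma^2+Rt)\in(0,1/R)$ recovers the Bernstein exponent $-t^2/(2(\sigma^2+Rt))$, and the side condition $t\ge\sigma+R$ ensures $\exp(\theta t)-1\ge\tfrac14\exp(\theta t)$, which absorbs the prefactor into the stated $4\deff$.

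The main obstacle is the intrinsic-dimension refinement itself: extending the Laplace-transform method from $\tr\exp$ to the shifted functional $\tr(\exp-\I)$ relies on operator convexity properties so that Lieb's inequality still applies after the shift. I would invoke this as a black box from [tropp2015introduction, Theorem 7.7.1] rather than reprove it. Once the two hypotheses are both converted to the same MGF bound, the rest is scalar Bernstein bookkeeping, so the whole argument is indeed, as claimed, a straightforward combination of the two cited results.
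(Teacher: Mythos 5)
Your sketch correctly reconstructs the argument implicit in the paper's citation: the paper states this lemma without proof, leaving the reader to combine the subexponential matrix Bernstein bound of Tropp (2012, Thm.~6.2) with the intrinsic-dimension refinement of Tropp (2015, Thm.~7.7.1), and that is exactly what you do via the unified MGF bound $\E[\exp(\theta\X_i)]\preceq\exp\big(\tfrac{\theta^2/2}{1-\theta R}\A_i^2\big)$, Lieb's theorem, and the shifted trace functional $\tr(\exp(\cdot)-\I)$. One small simplification worth noting: under hypothesis (2) the operator inequality $\X_i^p = \X_i^{\lfloor p/2\rfloor}(\cdots)\X_i^{\lfloor p/2\rfloor}\preceq R^{p-2}\X_i^2$ gives $\E[\X_i^p]\preceq R^{p-2}\A_i^2\preceq\tfrac{p!}{2}R^{p-2}\A_i^2$ for all $p\ge 2$, so (2) implies (1) outright and there is no constant to absorb.
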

Before we can use matrix concentration, we must first establish
high-probability concentration of the quadratic form
$\s^\top\U\U^\top\s$, for a leverage score sparsified sub-Gaussian
random vector $\s$. This is an analog of the Hanson-Wright inequality,
which holds for non-sparsified sub-Gaussian random vectors, as given
below. 
\begin{lemma}[Hanson-Wright inequality,
  {\cite[Theorem~1.1]{rudelson2013hanson}}]\label{l:hanson-wright} 
Let $\x$ have independent sub-Gaussian entries with
mean zero and unit variance. Then, there is $c=\Omega(1)$ such that
for any $n\times n$ matrix $\B$ and $t\geq 0$,
\begin{align*}
  \Pr\Big\{|\x^\top\B\x-\tr(\B)|\geq t\Big\}\leq
  2\exp\bigg(-c\min\Big\{\frac{t^2}{\|\B\|_F^2},\frac{t}{\|\B\|}\Big\}\bigg).
\end{align*}
\end{lemma}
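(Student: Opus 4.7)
The plan is to prove the bound by splitting $\B$ into its diagonal and off-diagonal parts, $\B = \B_d + \B_o$, and controlling each contribution separately before combining via a union bound. For the diagonal part, note that $\x^\top \B_d \x - \tr(\B) = \sum_i B_{ii}(x_i^2 - 1)$ is a sum of independent, mean-zero sub-exponential random variables (each $x_i^2 - 1$ is sub-exponential whenever $x_i$ is sub-Gaussian with $O(1)$ parameter). A standard Bernstein inequality yields a tail of the form $\exp\bigl(-c\min\{t^2/\sum_i B_{ii}^2,\, t/\max_i |B_{ii}|\}\bigr)$, and since $\sum_i B_{ii}^2 \le \|\B\|_F^2$ and $\max_i |B_{ii}| \le \|\B\|$, this already matches the target tail.

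The harder contribution is the off-diagonal chaos $\x^\top \B_o \x = \sum_{i \neq j} B_{ij} x_i x_j$. The key tool here is \emph{decoupling}: the inequality of de la Pe\~na and Montgomery-Smith shows that tail probabilities for $\x^\top \B_o \x$ are controlled, up to an absolute constant, by those of $\x^\top \B_o \y$ where $\y$ is an independent copy of $\x$. Conditioned on $\y$, the quantity $\x^\top (\B_o \y)$ is a linear combination of independent sub-Gaussians with variance proxy $\|\B_o \y\|^2$, so the conditional MGF is bounded by $\exp(c \lambda^2 \|\B_o \y\|^2)$. Integrating over $\y$ then reduces the problem to bounding the MGF of a nonnegative quadratic form in the sub-Gaussian vector $\y$ with matrix $\B_o^\top \B_o$.

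The main technical obstacle is making this last integration step rigorous without circularity, since controlling $\|\B_o \y\|^2$ is itself a Hanson-Wright-type statement. Following Rudelson and Vershynin, I would resolve this by a \emph{Gaussian comparison}: bound the sub-Gaussian MGF entry-by-entry by its Gaussian counterpart $\E[\exp(\lambda\, \g^\top \B_o \g')]$, where $\g, \g'$ are independent standard Gaussians. The Gaussian MGF admits the closed form $\det(\I - c\lambda^2 \B_o^\top \B_o)^{-1/2}$, which is bounded by $\exp\bigl(c\lambda^2 \|\B_o\|_F^2 / (1 - c\lambda \|\B_o\|)\bigr)$ for $|\lambda| < c'/\|\B_o\|$. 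Optimizing the Markov inequality over $\lambda$ produces the target tail $\exp(-c\min\{t^2/\|\B_o\|_F^2,\, t/\|\B_o\|\})$ for the off-diagonal part, and combining with the diagonal bound via a union bound (together with $\|\B_o\|_F \le \|\B\|_F$ and $\|\B_o\| \le 2\|\B\|$) yields the claimed inequality with $c = \Omega(1)$.
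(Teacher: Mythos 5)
The paper does not prove this lemma; it simply cites it as Theorem~1.1 of Rudelson and Vershynin. Your proposal is a correct reconstruction of exactly that reference's argument (diagonal/off-diagonal split, Bernstein for the sub-exponential diagonal sum, de la Pe\~na--Montgomery-Smith decoupling for the chaos, comparison of the sub-Gaussian MGF to the Gaussian one, and a closed-form bound for the Gaussian chaos MGF followed by Chernoff optimization), so there is nothing to compare against beyond the cited source. One small technical point: the bound you state for $\det(\I - c\lambda^2 \B_o^\top\B_o)^{-1/2}$ should have $1 - c\lambda^2\|\B_o\|^2$ in the denominator of the exponent rather than $1 - c\lambda\|\B_o\|$; in the regime $|\lambda| < c'/\|\B_o\|$ where the bound is used this is immaterial and the final tail is unaffected.
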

Our version of this result for sparsified sub-Gaussian vectors is an
extension of Lemma 31 of \cite{less-embeddings}, introducing the
effective dimension $\deff$ as opposed to the regular dimension $d$,
and allowing a broader class of sparsifiers, so that we can cover the
results for LESS-uniform embeddings.
\begin{lemma}\label{l:restricted-hanson-wright}
Let $\U=\A\H^{-\frac12}$ for $\H=\A^\top\A+\C$, and let
$\deff=\tr(\U^\top\U)$. Let $\xib$ be a $(p,s)$-sparsifier and $\x$ have
indepedent sub-Gaussian entries with mean zero and unit
variance. If $p_i= \Omega(l_i(\A,\C)/s)$ for all $i$, then for any
$t\geq C\deff$, vector $\s=\x\circ\xib$ satisfies: 
\begin{align*}
\Pr\Big\{\s^\top\U\U^\top\s\geq t\Big\}\leq
  \exp\Big(-c\,\big(\sqrt t + t/\deff\big)\Big).
\end{align*}
\end{lemma}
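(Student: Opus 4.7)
The plan is to condition on the sparsifier $\xib$, reducing $\s^\top\U\U^\top\s$ to a sub-Gaussian quadratic form in $\x$ to which we apply the standard Hanson-Wright inequality (Lemma \ref{l:hanson-wright}), then average the conditional tail over the randomness in $\xib$. With $D_\xib := \diag(\xi_1,\dots,\xi_n)$ and $\B_\xib := D_\xib\U\U^\top D_\xib$, we have $\s^\top\U\U^\top\s = \x^\top\B_\xib\x$; since $\B_\xib$ is positive semidefinite, $\|\B_\xib\|_F^2\leq\|\B_\xib\|\tr(\B_\xib)$, so Hanson-Wright gives
\[
\Pr\!\big\{\s^\top\U\U^\top\s\geq\tr(\B_\xib)+u\,\big|\,\xib\big\}
\leq 2\exp\!\Big(-c\min\big\{u^2/(\|\B_\xib\|\tr(\B_\xib)),\ u/\|\B_\xib\|\big\}\Big),
\]
and the remaining work is to control the two scalars $\tr(\B_\xib)$ and $\|\B_\xib\|$ over the randomness of $\xib$.

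Both scalars decompose as averages of $s$ iid quantities indexed by $t_1,\dots,t_s\sim p$. The trace $\tr(\B_\xib) = (1/s)\sum_k l_{t_k}/p_{t_k}$ has mean $\deff$, bounded summands (each $\leq 1/c$ under the hypothesis $p_i\geq cl_i/s$), and variance $O(\deff/s)$, so scalar Bernstein yields $\tr(\B_\xib)\leq 2\deff$ with failure probability $\exp(-c\deff)$. The operator norm $\|\B_\xib\| = \|(1/s)\sum_k(1/p_{t_k})\u_{t_k}\u_{t_k}^\top\|$ has expectation $\U^\top\U$ with $\|\U^\top\U\|\leq 1$, centered summands of norm $O(1)$, and variance matrix $\preceq (1/c)\U^\top\U$; matrix Bernstein (Lemma \ref{l:bernstein}) then yields $\Pr\{\|\B_\xib\|\geq 1+u\}\leq 8\deff\exp(-c\min(u,u^2))$.

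The final step is a union bound that balances the three failure sources. Put the trace bound on the good event $\{\tr(\B_\xib)\leq 2\deff\}$, so that for $t\geq C\deff$ with $C\geq 4$ the Hanson-Wright displacement becomes $u = t-\tr(\B_\xib)\geq t/2$. Then set a threshold $M$ on $\|\B_\xib\|$ so that the conditional Hanson-Wright tail $\exp(-c\min\{t^2/(M\deff),\,t/M\})$ and the matrix-Bernstein failure $8\deff\exp(-cM)$ both match the target $\exp(-c'(\sqrt t+t/\deff))$. For $C\deff\leq t\leq\deff^2$ the choice $M = c_0\sqrt t$ makes both bounds of order $\exp(-c'\sqrt t)$, which equals the target up to constants because $t/\deff\leq\sqrt t$ in this regime.

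The main obstacle is the very large $t$ regime, $t\gg\deff^2$, where the target is $\exp(-ct/\deff)$ but the combination of matrix Bernstein's linear-in-$M$ tail and Hanson-Wright's $t/M$ term cannot simultaneously deliver this rate: forcing $t/M\geq ct/\deff$ requires $M=O(\deff)$, yet $\exp(-c\deff)$ is weaker than $\exp(-ct/\deff)$ when $t\gg\deff^2$. Resolving this requires the deterministic fallback $\|\B_\xib\|\leq\tr(\B_\xib)\leq s/c$ (from $p_i\geq c l_i/s$ and $\sum_i b_i = s$) combined with a refined Hanson-Wright application on the low-probability event that $\|\B_\xib\|$ is atypically large, plus careful bookkeeping to cover both LESS ($p\propto l$) and LESS-uniform ($p$ uniform with $s$ scaled by coherence $\tau$).
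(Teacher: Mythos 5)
Your high-level plan — condition on $\xib$, apply Hanson--Wright to $\x^\top\Ubbar\Ubbar^\top\x$ with $\Ubbar=\diag(\xib)\U$, control $\|\Ubbar^\top\Ubbar\|$ via matrix Bernstein over the rank-one sum $\sum_k\frac{1}{sp_{t_k}}\u_{t_k}\u_{t_k}^\top$, and thread the failure probabilities — is exactly what the paper does. The place you stop, $t\gg\deff^2$, is precisely where the paper's remaining ingredient lives: the deterministic bound $\|\Ubbar\|^2\leq\tr(\Ubbar^\top\Ubbar)\leq C\deff$ a.s., which makes the single event $\Ec:\|\Ubbar\|^2\leq\min\{\sqrt t,C\deff\}$ have probability $1-\exp(-c(\sqrt t+t/\deff))$ (Bernstein's $\exp(-c\sqrt t)$ tail when $\sqrt t<C\deff$, probability one otherwise). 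On $\Ec$, Hanson--Wright with $\|\Ubbar\Ubbar^\top\|_F^2\leq\|\Ubbar\|^2\,\tr(\Ubbar^\top\Ubbar)$ yields $\min\{t^2/(\|\Ubbar\|^2\cdot C\deff),\, t/\|\Ubbar\|^2\}=\Omega(\sqrt t + t/\deff)$ in both regimes, closing the argument.

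Two remarks on the details you flagged. First, the deterministic fallback you state, $\tr(\Ubbar^\top\Ubbar)\leq s/c$, is the one that actually follows from $p_i\geq c\,l_i/s$ and $\sum_i b_i=s$; it coincides with the paper's $C\deff$ only when $s=O(\deff)$, which is the case for LESS ($s\approx\deff$) but not in general for LESS-uniform ($s=\tau\deff$). You have therefore put your finger on a real, implicit restriction in the paper's step rather than merely a loose end in your own. Second, your scalar-Bernstein step for $\tr(\B_\xib)$ is a detour that creates work: it carries an $\exp(-c\deff)$ failure probability, which is already the wrong order once $t\gg\deff^2$. Using the deterministic trace bound throughout removes the trace as a random quantity, leaves $\|\Ubbar\|^2$ as the only thing needing concentration, and eliminates this extra failure mode.
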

\begin{proof}
  The analysis follows along the same lines as the proof of Lemma 31
  in \cite{less-embeddings}. First, we define the shorthand
  $\Ubbar=\diag(\xib)\U$, and use Lemma \ref{l:bernstein} to bound the
  spectral norm $\|\Ubbar\|$. Observe that from the definition of the
  sparsifier $\xib$ we have the following decomposition:
  $\Ubbar^\top\Ubbar =
  \sum_{i=1}^s\frac1{sp_{t_i}}\u_{t_i}\u_{t_i}^\top$, where $\u_i^\top$
  denotes the $i$th row of $\U$ and
  $t_1,...,t_s$ are the independently sampled indices from $p$.  Note
  that since $l_i(\A,\C)=\|\u_i\|^2$, we have that
  $p_i=\Omega(\|\u_i\|^2/s)$, so
  $\X_i=\frac1{sp_{t_i}}\u_{t_i}\u_{t_i}^\top-\frac1s\U^\top\U$
  satisfies $\E[\X_i]=\mathbf{0}$, 
  $\|\X_i\|=O(1)$, and $\E[\X_i^2] = O(1/s)\cdot \I$. So, using
  Lemma~\ref{l:bernstein} with $\sigma^2=R=O(1)$, for any $t\geq
  C\deff$ we have $\Pr\{\|\Ubbar\|^2\geq \sqrt t\}\leq \exp(-c\sqrt
  t)$, with $c=\Omega(1)$. Using the fact that $\|\Ubbar\|^2\leq
  \tr(\Ubbar^\top\Ubbar)\leq C\deff$ almost surely, it follows that
  the event $\Ec:\|\Ubbar\|^2\leq\min\{\sqrt t,C\deff\}$ has
  probability $1-\exp(-c(\sqrt t + t/\deff))$. Now, it suffces to
condition on $\xib$ and  apply the Hanson-Wright inequality
(Lemma~\ref{l:hanson-wright}), concluding that:
\begin{align*}
  \Pr\big\{\x^\top\Ubbar\Ubbar^\top\x\geq C\deff+t\mid\xib,\Ec\big\}
  &\leq 2\exp\bigg(-c\min\Big\{\frac{t^2}{\|\Ubbar\Ubbar\|_F^2},
    \frac{t}{\|\Ubbar\Ubbar^\top\|}\Big\}\bigg)
  \\
  &\leq 2\exp(-\Omega(\sqrt t + t/\deff)),
\end{align*}
which completes the proof.
\end{proof}
By appropriately integrating out the concentration inequality from
Lemma~\ref{l:restricted-hanson-wright}, as in Lemma~30 of
\cite{less-embeddings} but replacing $d$ with $\deff$, we can show the following matrix moment bound.
\begin{lemma}
  Under the assumptions of Lemma \ref{l:restricted-hanson-wright}, for
  all $p=2,3,...$ we have:
\begin{align*}
  \bigg\|\E\bigg[\Big(\U^\top\s\s^\top\U-\U^\top\U\Big)^p\bigg]\bigg\|\leq \frac{p!}{2}\cdot(C\deff)^{p-1}.
\end{align*}
\end{lemma}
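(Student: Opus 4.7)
Write $\M := \U^\top\s\s^\top\U - \U^\top\U = \v\v^\top - \Sigmab$, where $\v = \U^\top\s$ and $\Sigmab = \U^\top\U$ satisfies $\|\Sigmab\|\leq 1$ and $\tr(\Sigmab) = \deff$. The plan is to turn the sub-exponential tail bound from Lemma~\ref{l:restricted-hanson-wright} into a spectral-norm moment bound on $\M$, using the rank-one-plus-bounded structure of $\M$ to gain the correct $(C\deff)^{p-1}$ scaling rather than the $(C\deff)^p$ that a naive Jensen argument would give.

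The first step is a routine scalar moment calculation. Integrating the tail $\Pr\{Y\geq t\}\leq \exp(-c(\sqrt t+t/\deff))$ for $Y:=\s^\top\U\U^\top\s$, I would obtain $\E[Y^q]\leq q!\,(C\deff)^q$ for all $q\geq 1$: split the identity $\E[Y^q]=\int_0^\infty qt^{q-1}\Pr\{Y\geq t\}\,dt$ at $t_0 = C\deff$ (where the tail bound first applies), bound the low-$t$ part by $(C\deff)^q$ deterministically, and evaluate the high-$t$ part using $\int_0^\infty qt^{q-1}e^{-ct/\deff}\,dt = q!(\deff/c)^q$.

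The naive reduction $\|\E[\M^p]\|\leq \E[\|\M\|^p]\leq \E[(Y+1)^p]$ only yields $p!(C\deff)^p$, which is off by a factor of $\deff$. To recover the sharper dependence I would expand
\[
\M^p = (\v\v^\top - \Sigmab)^p = \sum_{w\in\{+,-\}^p}(-1)^{|w|_-}\,\M_{w_1}\cdots\M_{w_p},\qquad \M_+=\v\v^\top,\ \M_-=\Sigmab,
\]
and apply the rank-one collapsing identity $\v\v^\top X\v\v^\top = (\v^\top X\v)\,\v\v^\top$. Each word with $k\geq 1$ copies of $\M_+$ thereby reduces to a product of $k-1$ scalar quadratic forms $\v^\top\Sigmab^{c_j}\v$ (each bounded above by $Y$ since $\|\Sigmab\|\leq 1$) multiplying a single rank-one matrix $\Sigmab^a\v\v^\top\Sigmab^b$; the unique all-$\Sigmab$ word contributes at most $\|\Sigmab\|^p\leq 1$. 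For every rank-one summand I would then use Cauchy-Schwarz in the form
\[
\big\|\E[\phi(\v)\,\Sigmab^a\v\v^\top\Sigmab^b]\big\| = \sup_{\|u\|=1}\big|\E[\phi(\v)(u^\top\Sigmab^a\v)(\v^\top\Sigmab^b u)]\big|\leq \sqrt{\E[\phi(\v)^2]\cdot\sup_{\|u\|=1}\E[(u^\top\v)^4]}\,,
\]
where $\phi(\v)$ is a polynomial in the scalar quadratic forms (bounded by powers of $Y$, and hence controlled by the moment bound from step one), and $\E[(u^\top\v)^4] = O(1)$ because $\v$ is, conditionally on the sparsifier $\xib$, a sub-Gaussian vector with covariance matrix of operator norm at most $1$.

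The main obstacle is the combinatorics of aggregating the $2^p$ words: a crude term-by-term bound produces an extra factor of $2^p$ that must be absorbed into the constant $C$. I expect this to follow by grouping words according to the number and positions of the $\v\v^\top$ factors so that the relevant binomial sums telescope, exactly as in Lemma~30 of~\cite{less-embeddings}; the only new features in the regularized setting are the insertion of bounded-norm factors $\Sigmab^a,\Sigmab^b$ around each rank-one piece (which, having operator norm at most $1$, never weaken the bound), and the substitution of $\deff=\tr(\Sigmab)$ in place of the ambient dimension $d$ throughout.
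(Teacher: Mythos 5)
Your proposal lands on the same two load-bearing ideas as the paper — integrate the sub-exponential tail of $Y=\s^\top\U\U^\top\s$ to get scalar moment bounds, and exploit the rank-one structure so that one factor of $Y\approx\deff$ is traded against the $O(1)$ fourth moment of $u^\top\U^\top\s$, giving $(C\deff)^{p-1}$ instead of the naive $(C\deff)^p$ — but it executes the second step by a fully combinatorial expansion of $(\v\v^\top-\Sigmab)^p$ into $2^p$ words, whereas the paper sidesteps the expansion entirely with two operator inequalities: $\M^p\preceq\|\M\|^{p-2}\M^2$ (spectral calculus) and $(\v\v^\top-\Sigmab)^2\preceq 2(\v\v^\top)^2+2\Sigmab^2$ (operator convexity of $x\mapsto x^2$), which together yield $\E[\M^p]\preceq 2\,\E[(Y+\deff)^{p-1}\U^\top\s\s^\top\U]+2\,\E[(Y+\deff)^{p-2}]\cdot\I$ in one line. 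The paper's route avoids the word-counting and the $2^p$ absorption you flag; your route is valid too since $4^p\le 16^{\,p-1}$ for $p\ge 2$ and $C$ is free, but it takes more bookkeeping. Your Cauchy–Schwarz reduction of $\|\E[\phi(\v)\Sigmab^a\v\v^\top\Sigmab^b]\|$ should quantify over two unit vectors (the matrix is not symmetric for $a\ne b$), but that is cosmetic.

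There is, however, one genuine flaw you should fix: the justification for $\E[(u^\top\v)^4]=O(1)$ via the claim that ``$\v$ is, conditionally on $\xib$, a sub-Gaussian vector with covariance matrix of operator norm at most $1$'' is false. Conditionally on $\xib$, $\cov(\v\mid\xib)=\U^\top\diag(\xib)^2\U$, whose operator norm is $\|\diag(\xib)\U\|^2$ and can be as large as $\Theta(\deff)$ (indeed this is exactly the quantity controlled in Lemma~\ref{l:restricted-hanson-wright}). The correct argument integrates over $\xib$: by conditional sub-Gaussianity $\E_\x[(u^\top\v)^4\mid\xib]\le C\,(u^\top\U^\top\diag(\xib)^2\U u)^2$, and then $\E_\xib[(u^\top\U^\top\diag(\xib)^2\U u)^2]=O(1)$ follows from the sparsifier's importance-sampling condition $p_i=\Omega(l_i(\A,\C)/s)$, using that the $i$th entry of $\U u$ has square at most $l_i(\A,\C)$ and that the squared entries sum to at most $1$. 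With this repair the argument closes.
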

\begin{proof}
  First, we bound the expression in terms of the quadratic form
  $\s^\top\U\U^\top\s$, so that we can use the concentration
  inequality from Lemma~\ref{l:restricted-hanson-wright}. To that end,
  we have:
  \begin{align*}
    \E\bigg[\Big(\U^\top\s\s^\top\U-\U^\top\U\Big)^p\bigg]
    &\preceq
      \E\bigg[\Big\|\U^\top\s\s^\top\U-\U^\top\U\Big\|^{p-2}\Big(\U^\top\s\s^\top\U-\U^\top\U\Big)^2\bigg]
    \\
    &\overset{(*)}\preceq \E\bigg[\Big(\s^\top\U\U^\top\s +
      \deff\Big)^{p-2}\Big(2(\U^\top\s\s^\top\U)^2 +
      2(\U^\top\U)^2\Big)\bigg]
    \\
    &\preceq
      2\,\E\bigg[\Big(\s^\top\U\U^\top\s +
      \deff\Big)^{p-1}\U^\top\s\s^\top\U\bigg] + 2\,\E\bigg[\Big(\s^\top\U\U^\top\s +
      \deff\Big)^{p-2}\bigg]\cdot \I,
  \end{align*}
where in $(*)$ we used the fact that function $f(x)=x^2$ is operator
convex.  Now, integrating out the concentration inequality from 
  Lemma~\ref{l:restricted-hanson-wright} for each of the two terms
  (following the steps of \cite[Appendix D.2]{less-embeddings}), we obtain the desired bound. 
\end{proof}

We can now apply Lemma \ref{l:bernstein} with
$\X_i=\frac1m\U^\top\s_i\s_i^\top\U-\frac1m\U^\top\U$, and
$\sigma^2=R=O(\deff/m)$, obtaining that:
\begin{align*}
  \Pr\big\{\|\gamma^{-1}\U^\top\S^\top\S\U-\U^\top\U\|\geq\eta\big\}\leq \deff\cdot\exp\big(-\Omega(\eta^2m/\deff)\big).
\end{align*}
Setting $m\geq C\deff\log(\deff/\delta)/\eta^2$, we obtain the desired
bound. Note that we must account again for the scaling
$\gamma=\frac{m}{m-\deff}$, which gets absorbed into the error
$\eta$.

Finally, observe that the conditions imposed on the sparsifier
$\xib$ in Lemma \ref{l:restricted-hanson-wright} encompass both LESS
and LESS-uniform embeddings. In the case of LESS, we can simply let
$s\approx_{1/2}\deff$, and then the condition on sparsifying
distribution is $p_i=\Omega(l_i(\A,\C)/\deff)$. On the other hand, for
LESS-uniform, as long as $s=\Omega(\tau\deff)$ where $\tau=\frac
n\deff\max_i l_i(\A,\C)$ is the coherence of $\A$, it follows that
$\frac1n=\Omega(l_i(\A,\C)/s)$ for all $i$'s, so a uniformly sparsifying
distribution suffices. 

\vspace{-3mm}
\subsection{LESS embeddings: Condition \ref{cond2}}
Here, we prove a result that is similar to the so-called Restricted
Bai-Silverstein inequality from \cite[Lemma 28]{less-embeddings}. Our
assumptions on the sparsifier are somewhat weaker, to account for
LESS-uniform embeddings and for the presence of regularization. 
\begin{lemma}
Let $\U=\A\H^{-\frac12}$ for $\H=\A^\top\A+\C$. Let $\xib$ be a $(p,s)$-sparsifier and $\x$ have
indepedent sub-Gaussian entries with mean zero and unit
variance. If $p_i= \Omega(l_i(\A,\C)/s)$ for all $i$, then for all
$d\times d$ psd
matrices $\B$, vector $\s=\x\circ\xib$ satisfies:
\begin{align*}
  \Var[\s^\top\U\B\U^\top\s] \leq O(1)\cdot \tr(\U\B^2\U^\top).
\end{align*}
\end{lemma}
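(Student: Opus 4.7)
The plan is to prove this bound via the law of total variance, conditioning on the sparsifier $\xib$. Writing $\M = \U\B\U^\top$ and $\bar{\M} = \diag(\xib)\M\diag(\xib)$, we have $\s^\top\M\s = \x^\top\bar{\M}\x$ and $\E_\x[\x^\top\bar{\M}\x\mid\xib] = \tr(\bar{\M}) = \sum_i \xib_i^2 M_{ii}$, so
\begin{align*}
\Var[\s^\top\M\s] = \E_{\xib}\bigl[\Var_{\x}[\x^\top\bar{\M}\x\mid\xib]\bigr] + \Var_{\xib}\bigl[\tr(\bar{\M})\bigr].
\end{align*}
I would bound each piece separately by $O(1)\cdot\tr(\U\B^2\U^\top)$.

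For the inner term, I apply Lemma \ref{l:bai-silverstein} to $\x$ against $\bar{\M}$, giving $\Var_{\x}[\x^\top\bar{\M}\x\mid\xib] \leq O(1)\tr(\bar{\M}^2) = O(1)\sum_{i,j} \xib_i^2\xib_j^2 M_{ij}^2$. A direct second-moment calculation using $\xib_i^2 = b_i/(sp_i)$, where $b_i$ counts occurrences of index $i$ among the $s$ i.i.d.\ draws $t_1,\ldots,t_s \sim p$, yields $\E[\xib_i^2\xib_j^2] = (s-1)/s \leq 1$ for $i\neq j$ and $\E[\xib_i^4] \leq 1 + 1/(sp_i)$. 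Hence $\E_{\xib}[\tr(\bar{\M}^2)] \leq \|\M\|_F^2 + \sum_i M_{ii}^2/(sp_i)$. For the outer term, $\tr(\bar{\M}) = \frac{1}{s}\sum_{k=1}^s M_{t_k t_k}/p_{t_k}$ is an average of $s$ i.i.d.\ summands, so $\Var_{\xib}[\tr(\bar{\M})] = \frac{1}{s}\Var[M_{tt}/p_t] \leq \frac{1}{s}\sum_i M_{ii}^2/p_i$, which matches the second contribution above.

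It remains to bound the two residual quantities $\|\M\|_F^2$ and $\sum_i M_{ii}^2/(sp_i)$ by $\tr(\U\B^2\U^\top)$. For the Frobenius norm, $\|\M\|_F^2 = \tr(\B(\U^\top\U)\B(\U^\top\U))$; since $\A^\top\A \preceq \H$ gives $\U^\top\U \preceq \I$, and $\B$ is PSD, the sandwich inequality $\B(\U^\top\U)\B \preceq \B^2$ combined with $\U^\top\U \succeq 0$ yields $\|\M\|_F^2 \leq \tr(\B^2\,\U^\top\U) = \tr(\U\B^2\U^\top)$. For the diagonal sum, the hypothesis $p_i = \Omega(l_i(\A,\C)/s) = \Omega(\|\u_i\|^2/s)$ gives $1/(sp_i) = O(1/\|\u_i\|^2)$, and the rank-one Cauchy--Schwartz bound $(\u_i^\top \B \u_i)^2 \leq \|\u_i\|^2 \cdot \u_i^\top \B^2 \u_i$ reduces $M_{ii}^2/\|\u_i\|^2 \leq \u_i^\top \B^2 \u_i$; summing over $i$ gives $\tr(\U\B^2\U^\top)$.

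No step is a serious obstacle. The proof is essentially careful bookkeeping combining a multinomial moment computation for the sparsifier, the standard Bai--Silverstein bound applied conditionally on $\xib$, and two elementary trace inequalities. The crucial structural input is the leverage-score lower bound on $p_i$, which exactly compensates the $1/p_i$ factors appearing in the diagonal $M_{ii}^2$ terms and is what makes the bound dimension-adaptive rather than scaling with $n$.
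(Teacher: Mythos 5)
Your proposal is correct and follows essentially the same path as the paper's own proof: law of total variance conditioned on the sparsifier, the Bai--Silverstein lemma applied conditionally for the inner term, an i.i.d.-sum variance bound for the outer term, then $\U^\top\U\preceq\I$ together with Cauchy--Schwarz and the leverage-score lower bound $p_i=\Omega(\|\u_i\|^2/s)$ to reduce everything to $\tr(\U\B^2\U^\top)$. The only cosmetic difference is that you expand $\tr(\bar\M^2)$ entrywise and compute multinomial moments of $\xib_i^2\xib_j^2$, whereas the paper expands the $d\times d$ matrix $\Ubbar^\top\Ubbar=\sum_{i=1}^s\frac{1}{sp_{t_i}}\u_{t_i}\u_{t_i}^\top$ directly; the resulting diagonal/cross-term split is identical.
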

\begin{proof}
  Let $\Ubbar = \diag(\xib)\U$. We start with a decomposition of the variance:
    \begin{align*}
    \Var[\s^\top\U\B\U^\top\s]
    &=\E\big[(\x^\top\Ubbar\B\Ubbar^\top\x
    -\tr(\Ubbar\B\Ubbar^\top)
    +    \tr(\Ubbar\B\Ubbar^\top) - \tr(\B))^2\big]\\
    &=\E\big[\Var[\x^\top\Ubbar\B\Ubbar^\top\x\mid\Ubbar]\big] +
      \Var[\tr(\Ubbar\B\Ubbar^\top)].
  \end{align*}
Recall that
  $\Ubbar^\top\Ubbar=\sum_{i=1}^s\frac1{sp_{t_i}}\u_{t_i}\u_{t_i}^\top$,
where $\u_i^\top$ is the $i$th row of $\U$ and  $p_i= \Omega(\|\u_i\|^2/s)$.
Then
  \begin{align*}
    \Var\big[\tr(\Ubbar\B\Ubbar^\top)\big]
    & =s\,\Var\bigg[\frac{\u_{t_1}^\top\B\u_{t_1}}{sp_{t_1}}\bigg]
        \leq\E\bigg[\frac{\|\u_{t_1}\|^2}{sp_{t_1}}\,\frac{\u_{t_1}^\top\B^2\u_{t_1}}{p_{t_1}}\bigg]
= O(1)\,\tr(\U\B^2\U^\top) ,
  \end{align*}
where we use that $\U^\top\U\preceq\I$.
    Next, we use the classical Bai-Silverstein inequality (Lemma
    \ref{l:bai-silverstein}):
    \begin{align*}
      \E\big[\Var[\x^\top\Ubbar\B\Ubbar^\top\x\mid\Ubbar]\big]
      &\leq O(1)\cdot \E\big[\tr\big((\Ubbar\B\Ubbar^\top)^2\big)\big]
 =O(1)\cdot\E\Big[\tr\Big(\Big(\sum_{i=1}^s\frac{1}{sp_{t_i}}\B\u_{t_i}\u_{t_i}^\top\Big)^2\Big)\Big]
      \\
      &=O(1)\,\tr(\U\B^2\U^\top) + O(1)\,\tr\big((\U\B\U^\top)^2\big),
    \end{align*}
    where the last step follows by breaking down the expanded square
    into the diagonal part and the cross-terms. Since
    $\tr\big((\U\B\U^\top)^2\big)\leq\tr(\U\B^2\U^\top)$, this
    completes the proof.
\end{proof}

\vspace{-3mm}
\section{Distributed Averaging for Newton-LESS}
\label{a:distributed}

An important property of the Gaussian Newton Sketch is that it
produces unbiased estimates of the exact Newton step. This is useful
in distributed settings, where we can construct multiple independent
estimates in parallel, and then produce an improved estimate by
averaging them together. Newton-LESS retains this unbiasedness
property, up to a small error, which also makes it amenable to
distributed averaging. This near-unbiasedness of LESS embeddings
follows from the characterization of the first inverse moment of the
sketched Hessian (see \cite{less-embeddings} and the first part of
Theorem \ref{t:structural}).

In this section, we show that the near-unbiasedness of LESS embeddings
can be combined with our new convergence analysis to provide improved
convergence rates for Distributed Newton-LESS:
\begin{align}
  \xbt_{t+1} = \xbt_t - \frac{\mu_t}q\sum_{i=1}^q
  \big(\Af{f_0}{\xbt_t}^\top\S_{t,i}^\top\S_{t,i}
  \Af{f_0}{\xbt_t} + \nabla^2g(\xbt_t)\big)^{-1}\nabla
  f(\xbt_t),\label{eq:distributed-sketch} 
\end{align}
where $\S_{t,i}$ are independently drawn LESS embedding matrices. To
adapt our analysis for this algorithm, we extend the characterization
from Lemma \ref{l:regularized-decomposition}.
\begin{lemma}\label{l:distributed-decomposition}
Fix $\H_t=\nabla^2 f(\xbt_t)$ and let $\xbt_{t+1}$ be as in
\eqref{eq:distributed-sketch} with $\S_{t,i}$ as in Lemma
\ref{l:structural-less} (i.e., sub-Gaussian, LESS or
LESS-uniform). Also, suppose that the exact Newton step $\x_{t+1}=\xbt_t-\mu_t\H_t^{-1}\g_t$ is a descent direction,
i.e., $\|\Delta_{t+1}\|_{\H_t}\leq\|\Deltat_t\|_{\H_t}$ where
$\Delta_{t+1}=\x_{t+1}-\x^*$ and $\Deltat_t=\xbt_t-\x^*$. Then, letting $\rho
=\tfrac{\deff(\xbt_t)}{m-\tdeff(\xbt_t)}$, we have: 
\begin{align*}
  \E_{q\delta}\,\|\Deltat_{t+1}\|_{\H_t}^2 =
  \|\Delta_{t+1}\|_{\H_t}^2 +
  \tfrac\rho q\,\|\x_{t+1}-\xbt_t\|_{\nabla^2 f_0(\xbt_t)}^2 \pm
  O\big(\tfrac{\sqrt \deff}{m}\big)\|\Deltat_t\|_{\H_t}^2.
\end{align*}
\end{lemma}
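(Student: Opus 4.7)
The plan is to follow the structure of the proof of Lemma \ref{l:regularized-decomposition} almost verbatim, tracking how averaging $q$ independent sketches affects the second-moment term while leaving the first-moment (bias) term unchanged. Set $\Qbt_i = \H_t^{\frac12}(\Af{f_0}{\xbt_t}^\top\S_{t,i}^\top\S_{t,i}\Af{f_0}{\xbt_t} + \nabla^2 g(\xbt_t))^{-1}\H_t^{\frac12}$ and $\Qbar = \frac1q\sum_{i=1}^q\Qbt_i$. A direct calculation gives $\xbt_{t+1}-\x_{t+1} = \H_t^{-\frac12}(\Qbar-\I)\H_t^{\frac12}\p_t$ where $\p_t = \x_{t+1}-\xbt_t$, so
\begin{align*}
\E_{\Ec}\|\Deltat_{t+1}\|_{\H_t}^2 - \|\Delta_{t+1}\|_{\H_t}^2
= 2\Delta_{t+1}^\top\H_t^{\frac12}\bigl(\E_\Ec[\Qbar]-\I\bigr)\H_t^{\frac12}\p_t
+ \p_t^\top\H_t^{\frac12}\,\E_\Ec\!\bigl[(\Qbar-\I)^2\bigr]\H_t^{\frac12}\p_t,
\end{align*}
by the same expansion used in Lemma \ref{l:regularized-decomposition}.

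Next, I would construct the conditioning event properly. For each $i$, Theorem \ref{t:structural} supplies an event $\Ec_i$ depending only on $\S_{t,i}$, with $\Pr(\Ec_i)\geq 1-\delta$, on which $\|\E_{\Ec_i}[\Qbt_i]-\I\|\leq O(\sqrt{\deff}/m)$ and $\|\E_{\Ec_i}[\Qbt_i^2]-(\I+\rho\,\U^\top\U)\|\leq O(\sqrt{\deff}/m)$. Set $\Ec=\bigcap_{i=1}^q\Ec_i$, so $\Pr(\Ec)\geq 1-q\delta$ by union bound, matching the $\E_{q\delta}$ notation in the statement. Because the $\S_{t,i}$ are independent and $\Ec_i$ depends only on $\S_{t,i}$, the conditional distributions of the $\Qbt_i$ given $\Ec$ are mutually independent, and moreover $\E_\Ec[\Qbt_i] = \E_{\Ec_i}[\Qbt_i]$ and $\E_\Ec[\Qbt_i^2]=\E_{\Ec_i}[\Qbt_i^2]$.

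The key step is the averaging identity: by linearity $\E_\Ec[\Qbar] = \E_{\Ec_1}[\Qbt_1]$, while by conditional independence
\begin{align*}
\E_\Ec[\Qbar^2] = \tfrac{1}{q}\,\E_{\Ec_1}[\Qbt_1^2] + \tfrac{q-1}{q}\bigl(\E_{\Ec_1}[\Qbt_1]\bigr)^2.
\end{align*}
Plugging in the first- and second-moment bounds from Theorem \ref{t:structural} and using $\|\E_{\Ec_1}[\Qbt_1]\|=O(1)$ to absorb the cross terms, this yields
\begin{align*}
\bigl\|\E_\Ec[\Qbar]-\I\bigr\|\leq O\bigl(\tfrac{\sqrt\deff}{m}\bigr),\qquad
\bigl\|\E_\Ec[(\Qbar-\I)^2] - \tfrac{\rho}{q}\,\U^\top\U\bigr\|\leq O\bigl(\tfrac{\sqrt\deff}{m}\bigr).
\end{align*}
Substituting these into the expansion above, using $\p_t^\top\H_t^{\frac12}\U^\top\U\H_t^{\frac12}\p_t=\|\p_t\|_{\nabla^2 f_0(\xbt_t)}^2$, and bounding the remainder by Cauchy--Schwarz together with $\|\p_t\|_{\H_t}\leq\|\Delta_{t+1}\|_{\H_t}+\|\Deltat_t\|_{\H_t}\leq 2\|\Deltat_t\|_{\H_t}$ (which uses the descent hypothesis), delivers the claimed two-sided bound. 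The matching lower bound follows because every step is an equality up to the $O(\sqrt\deff/m)$ perturbation terms, exactly as in Lemma \ref{l:regularized-decomposition}.

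The only subtle step is the conditional-independence argument for $\E_\Ec[\Qbar^2]$: one must verify that conditioning on the product event $\Ec=\bigcap_i\Ec_i$ does not introduce spurious correlation between $\Qbt_i$ and $\Qbt_j$ for $i\neq j$. This is immediate once one observes that $\Ec_i$ is measurable with respect to $\S_{t,i}$ alone, but it is the one place where the distributed proof genuinely diverges from the single-sketch case, and it is essential for producing the $1/q$ factor in front of $\rho$.
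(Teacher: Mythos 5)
Your proof is correct and follows essentially the same route as the paper's: replace $\Qbt$ by $\Qbar=\tfrac1q\sum_i\Qbt_i$, condition on $\Ec=\bigcap_i\Ec_i$ (giving $1-q\delta$ by union bound), exploit conditional independence to get $\E_\Ec[\Qbar^2]=\tfrac1q\E_{\Ec_1}[\Qbt_1^2]+\tfrac{q-1}{q}(\E_{\Ec_1}[\Qbt_1])^2$, and plug into the expansion from Lemma~\ref{l:regularized-decomposition}. Your explicit remark that each $\Ec_i$ is measurable with respect to $\S_{t,i}$ alone, so conditioning on the intersection preserves independence of the $\Qbt_i$, is a useful clarification of a step the paper treats implicitly when it writes $\E_{\Ec}[\Qbt_i]\E_{\Ec}[\Qbt_j]$ for the cross-terms.
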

\begin{proof}
The proof is analogous to the proof of Lemma
\ref{l:regularized-decomposition}, except we must replace $\Qbt$ with
\begin{align*}
  \Qbar=\frac1q\sum_{i=1}^q\Qbt_i,\qquad\text{for}\qquad\Qbt_i =
  \H_t^{\frac12}(\Af{f_0}{\xbt_t}^\top\S_{t,i}^\top\S_{t,i}\Af{f_0}{\xbt_t}+
  \nabla^2 g(\xbt_t))^{-1}\H_t^{\frac12}.
\end{align*}
Each $\Qbt_i$ satisfies the first and second moment
characterizations from Theorem \ref{t:structural}. Let
$\Ec=\bigwedge_{i=1}^q\Ec_i$ denote the intersection of the
corresponding $1-\delta$ probability events. Then,
$\|\E_{\Ec}[\Qbar] - \I\| \leq O\big(\tfrac{\sqrt \deff}{m}\big)$ and also:
\begin{align*}
  % \E_{\Ec}\big[(\Qbar-\I)^2\big]
  % &=\I - 2\E_{\Ec}[\Qbt_1] + \frac1{q^2}(q\E[\Qbt_1^2]+q(q-1)\E[\Qbt_1]^2\
  %   \\
  % &=\big(\E_{\Ec}[\Qbt]-\I\big)^2 + \frac1q\big(\E_{\Ec}[\Qbt^2] -
  %   \E_{\Ec}[\Qbt]^2\big)
\E_{\Ec}[\Qbar^2] - \I
  &=\frac1{q^2}\sum_{i=1}^q\E_{\Ec}[\Qbt_i^2] - \sum_{i\neq j}
    \E_{\Ec}[\Qbt_i]\E_{\Ec}[\Qbt_j] - \I 
  \\
  &=\frac1q\big(\E_{\Ec}[\Qbt_1^2]-\I) + \frac{q(q-1)}{q^2}\big(\E_{\Ec}[\Qbt_1]^2-\I\big),
\end{align*}
so using that $\|\E_{\Ec}[\Qbt_1^2]-(\I+\rho\,\U^\top\U)\|\leq
O\big(\frac{\sqrt \deff}{m}\big)$, where
$\U=\Af{f_0}{\xbt}\H_t^{-\frac12}$, we get: 
\begin{align*}
  \|\E_{\Ec}[\Qbar^2] - (\I+\tfrac\rho q\U^\top\U)\|\leq O\big(\tfrac{\sqrt \deff}{m}\big).
\end{align*}
The rest of the proof follows identically as in
Lemma~\ref{l:regularized-decomposition}, using $\Qbar$ in place of
$\Qbt$. Note that, using the union bound,  we can show that
the probability of $\Ec$ is at least $1-q\delta$.
\end{proof}
From this lemma, repeating the local convergence analysis of
Theorem~\ref{t:regularized-full},  we obtain that in the neighborhood
of $\x^*$, setting $\mu_t=\frac{q(m-\tdeff)}{\deff+q(m-\tdeff)}$, Distributed Newton-LESS achieves:
  \begin{align*}
\bigg(\E_{qT\delta}\,\frac{\|\xbt_T-\x^*\|_{\H}^2}{\|\xbt_0-\x^*\|_{\H}^2}\bigg)^{1/T}
\leq\ \frac{\deff}{\deff+q(m-\tdeff)}+ O\Big(\frac{\sqrt\deff}{m}\Big),
  \end{align*}
  and for the unregularized case, where $\deff=\tdeff=d$, we can
  obtain a matching lower bound on the convergence rate. This shows
  that the convergence rate of Newton-LESS can be substantially improved
via distributed averaging.

\vspace{-3mm}
\section{Additional Numerical Experiments and Implemention Details}
\label{a:additionalnumericalexperiments}

Experiments are implemented in Python using the Pytorch module on
Amazon Sagemaker instances with CPUs with $256$ gigabytes of memory
and GPUs NVIDIA Tesla V100. Code is publicly available at
\url{https://github.com/lessketching/newtonsketch}.

\vspace{-3mm}
\subsection{Sketching matrices}
\label{a:sketchingmatrices}

Given a data matrix $\A \in \real^{n \times d}$, we follow the
procedure described in \cite{drineas2012fast} (see Algorithm~1
therein) for fast approximation of the leverage scores. We use these
approximate leverage scores to compute the RSS-lev-score embedding. 

For LESS embeddings, we report the performance of the computationally
most efficient method between using the approximate leverage scores,
or, pre-processing the data matrix $\A$ by a Hadamard matrix $\H$ and
then using a uniformly sparsified sketching matrix. Preprocessing with
a Hadamard matrix uniformizes the leverage scores, so this second
option is a valid implementation of a LESS embedding (see
\cite{less-embeddings} for a detailed discussion). We found this
second option to be the fastest method in practice. 

For a LESS-uniform embedding $\S \in \R^{m \times n}$, we fix a number
of non-zero entries per row to be $s$. For each row $\s_i^\top$, we
sample $s$ indices $\{i_1, \dots, i_s\}$ in $\{1,\dots,n\}$ uniformly
at random with replacement. Each entry $S_{i i_j}$ is then chosen
uniformly at random in $\{\pm (n/m s)^{1/2}\}$. We choose to sample
with replacement for maximal computational efficiency. In our
experiments, the number of non-zero entries $s$ is small in comparison
to the sample size $n$, so the probability of sampling twice the
same index remains very small.

\vspace{-3mm}
\subsection{Datasets}
\label{a:datasets}

The high-coherence synthetic data matrix $\A$ is generated as
follows. We construct a covariance matrix $\mathbf{\Sigma} \in \R^{d
  \times d}$ with entries $\mathbf{\Sigma}_{ij} = 2 \cdot
0.5^{|i-j|}$. The rows $\mathbf{a}_i$ of $\A$ are then sampled
independently as $\mathbf{a}_i \sim \mathbf{g}_i / \sqrt{z_i}$ where
$\mathbf{g}_i \sim \mathcal{N}(0,\Sigma)$ and $z_i$ follows a Gamma
distribution with shape $1/2$ and scale $2$. We use $n=16384$ and
$d=256$. 

We downloaded the Musk and CIFAR-10 datasets from
\url{https://www.openml.org/}. The Musk data matrix has size $n=4096$
and $d=256$. The sample size of the CIFAR-10 dataset is $n=50000$. We
transform each image using a random features map that approximates the
Gaussian kernel $\exp(-\gamma x^2)$ with bandwith $\gamma=0.02$, and
we use $d=2000$ random cosine components. Regarding the labels, we
partition the ten classes of CIFAR-10 into two groups with
corresponding labels $0$ and $1$.  

For the  WESAD  dataset \cite{schmidt2018introducing},  we  used  the  data  obtained  from  the  E4  Empatica  device  and we filtered the data over windows of one second.%
\footnote{We refer to the public repository for implementation details about subsampling the signal, \url{https://github.com/WJMatthew/WESAD/blob/master/data_wrangling.py}.}  
This results in a sample size $n=262144$. Then we applied a random features map that approximates the Gaussian kernel $\exp(-\gamma x^2)$ with $\gamma=0.01$ and we use $d=2000$ components.

\vspace{-3mm}
\subsection{Least squares regression}

We consider first least squares regression. On Figure \ref{fig:ls-all},
we report the relative error versus number of
iterations, as well as the relative error versus wall-clock time for
the Newton Sketch. We compare to Gaussian embeddings, the SRHT,
uniformly random row sampling matrices (RRS) and random row sampling
based on approximate leverage scores (RRS-lev-scores). As predicted by
our theory, LESS embeddings have convergence rate scaling as
$d/m$. This is similar to the convergence rate of the Newton sketch
with Gaussian embeddings \cite{lacotte2019faster}. We also observe
similar convergence for the SRHT, which is not explained by
existing worst-case theory \cite{tropp2011improved}, but it matches the
predictions based on high-dimensional asymptotic analysis of the SRHT
\cite{lacotte2020limiting}. Except for CIFAR-10, RSS and 
RSS-lev-scores have weaker convergence rates. This suggests that the
CIFAR-10 data matrix has low coherence. Except for the high-coherence
synthetic data matrix for which the convergence rate is slightly worse
than $d/m$, using LESS with a uniformly random sparsifier does not
affect the convergence rate. Here, we implement LESS-uniform with $d$
non-zero entries per row subsampled uniformly at random. Importantly,
LESS-uniform offers significant speed-ups over other sketching
matrices.  

Note that some curves stop earlier than
others (e.g., RRS) on the wall-clock time versus error plots, because we
run the Newton sketch for each embedding for 
a fixed number of iterations. 

\begin{figure}[!ht]
	\centering
	\begin{minipage}[t]{\textwidth}
		\centering
		\includegraphics[width=.4\linewidth]{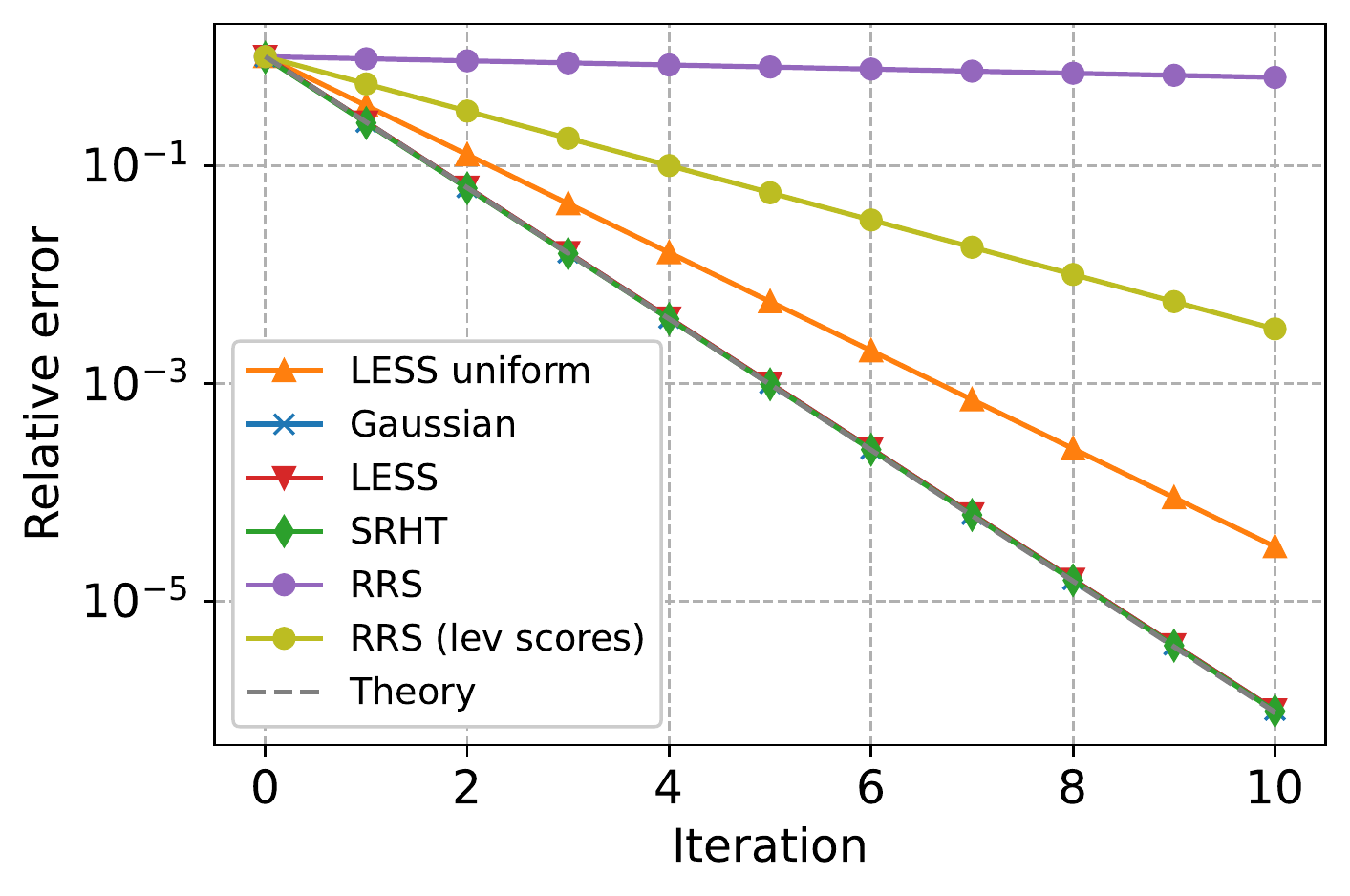}
		\includegraphics[width=.4\linewidth]{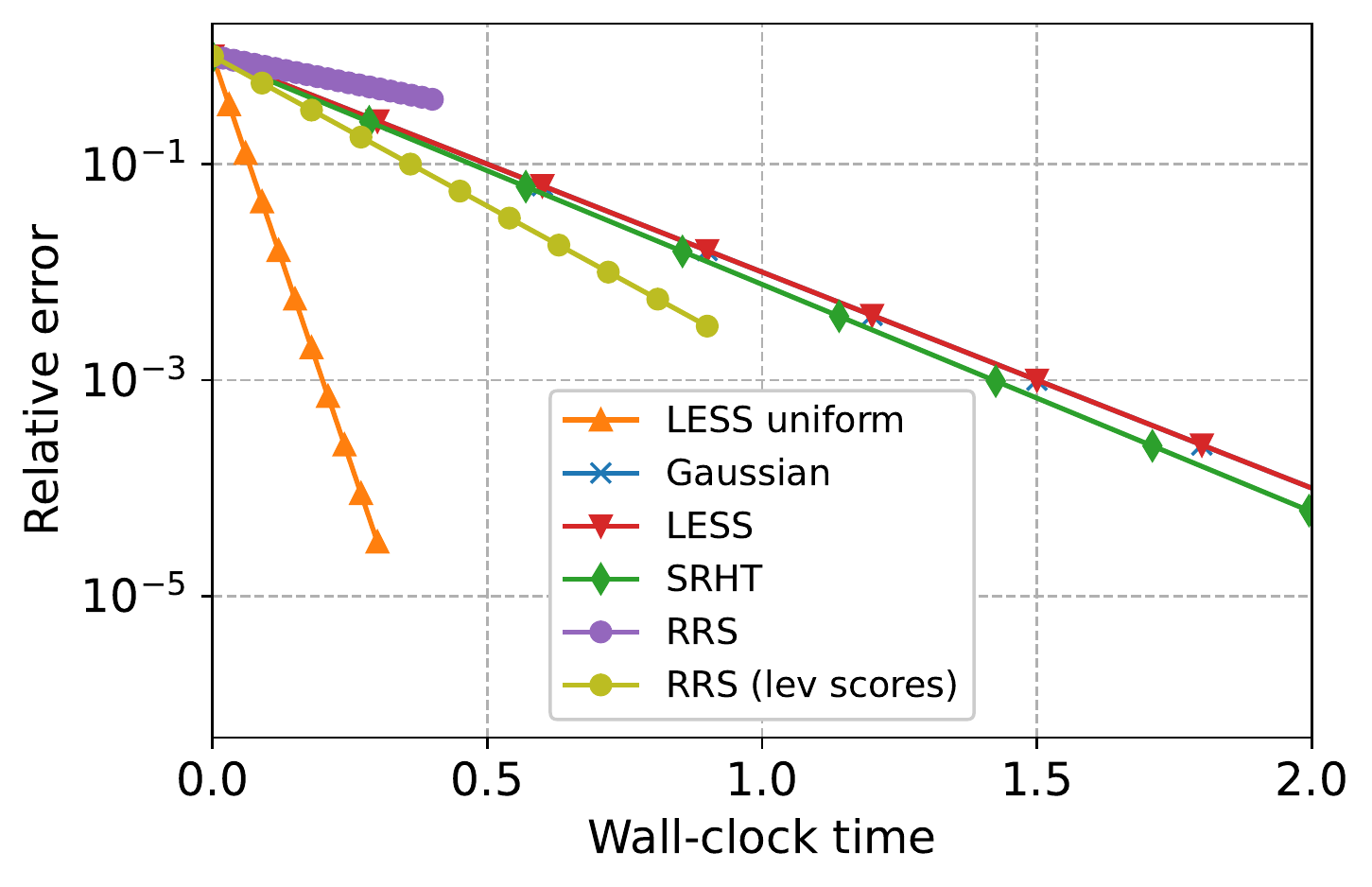}
                \vspace{-5mm}
	\caption*{\footnotesize{~\quad(a) High-coherence synthetic matrix}}
	\end{minipage}
        \vspace{2mm}
        
	\begin{minipage}[t]{\textwidth}
		\centering
		\includegraphics[width=.4\linewidth]{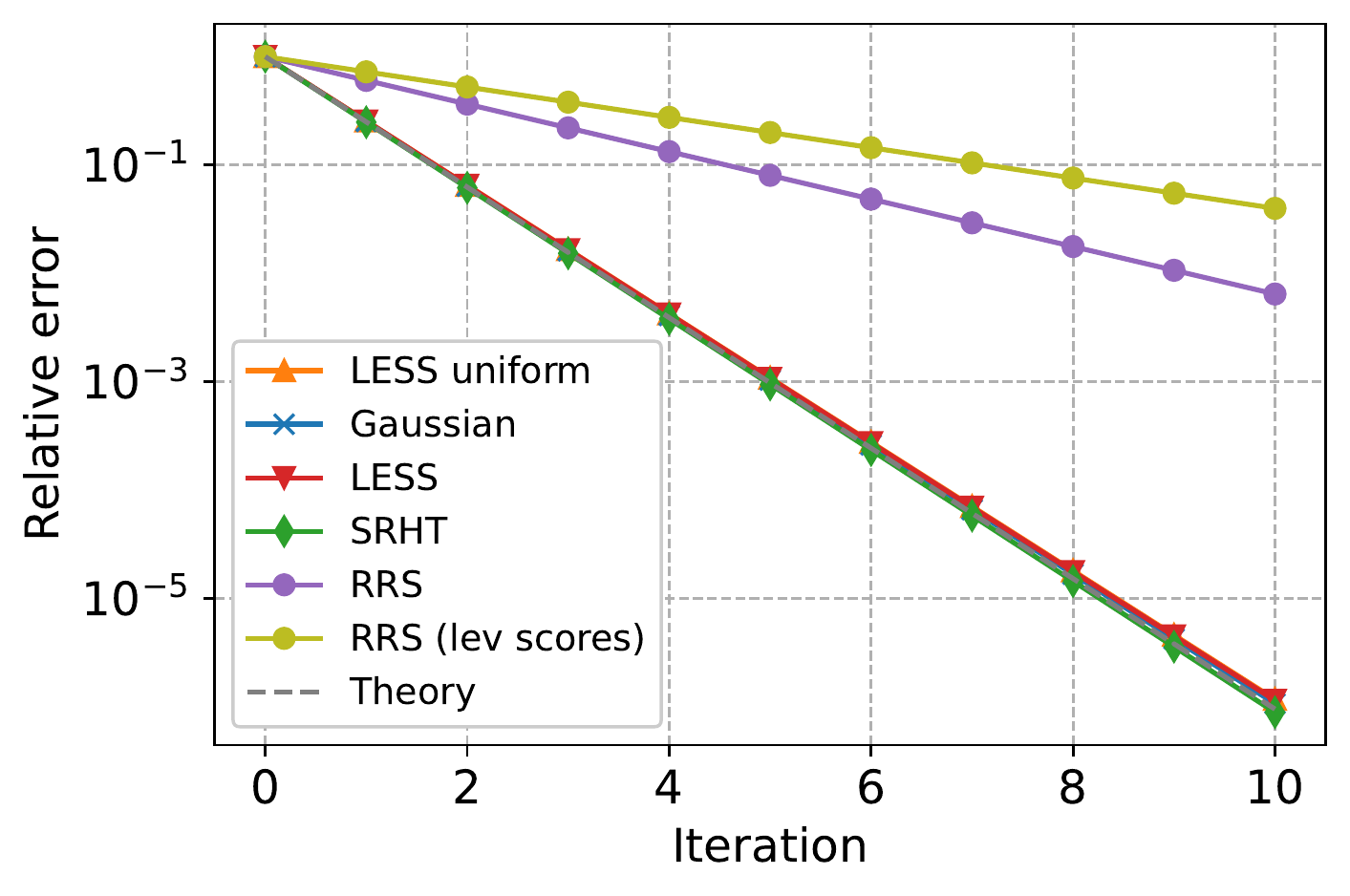}
		\includegraphics[width=.4\linewidth]{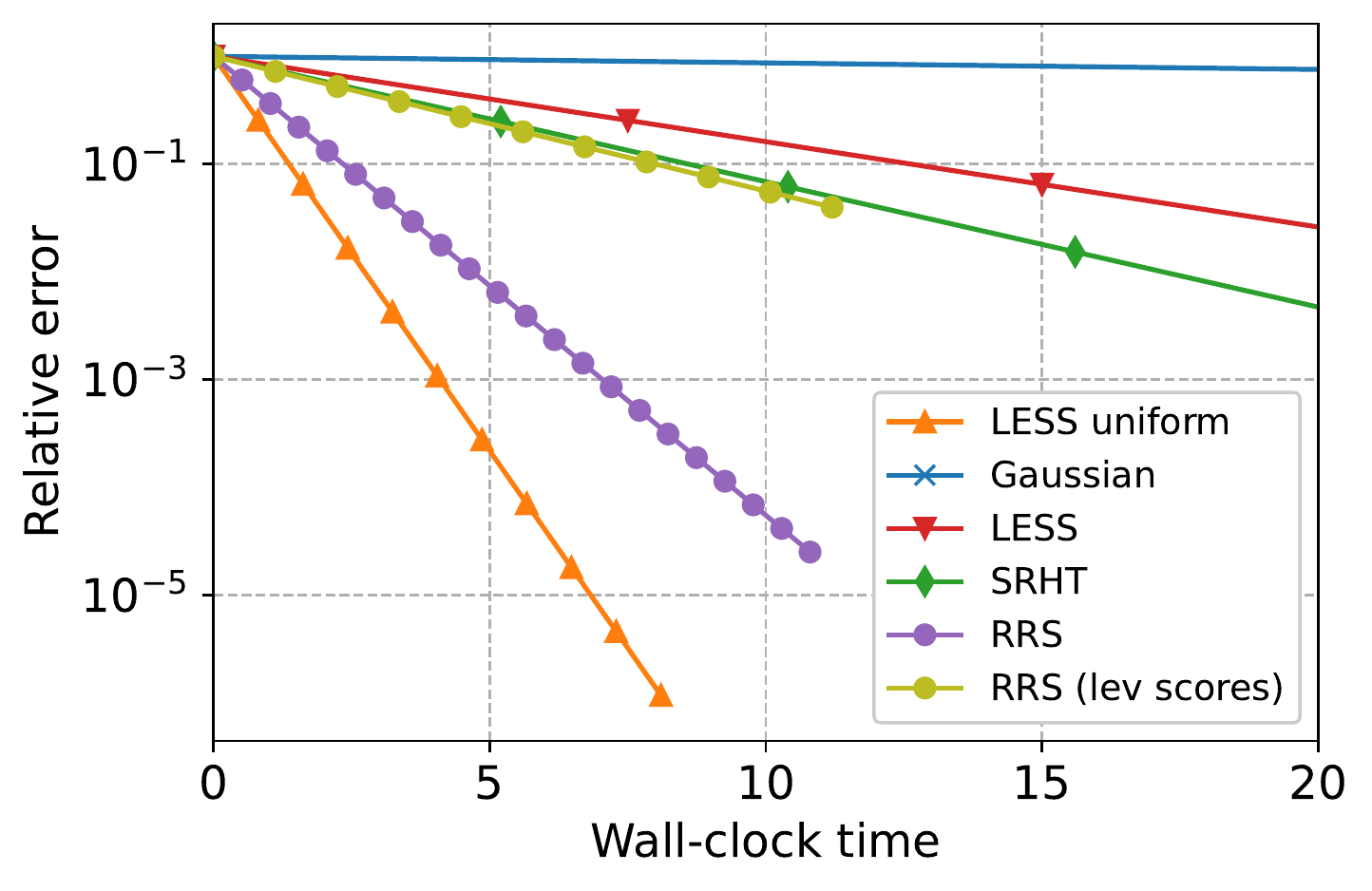}
                \vspace{-5mm}
                \caption*{\footnotesize{(b) WESAD dataset}}
	\end{minipage}
\vspace{2mm}

\begin{minipage}[t]{\textwidth}
		\centering
		\includegraphics[width=.4\linewidth]{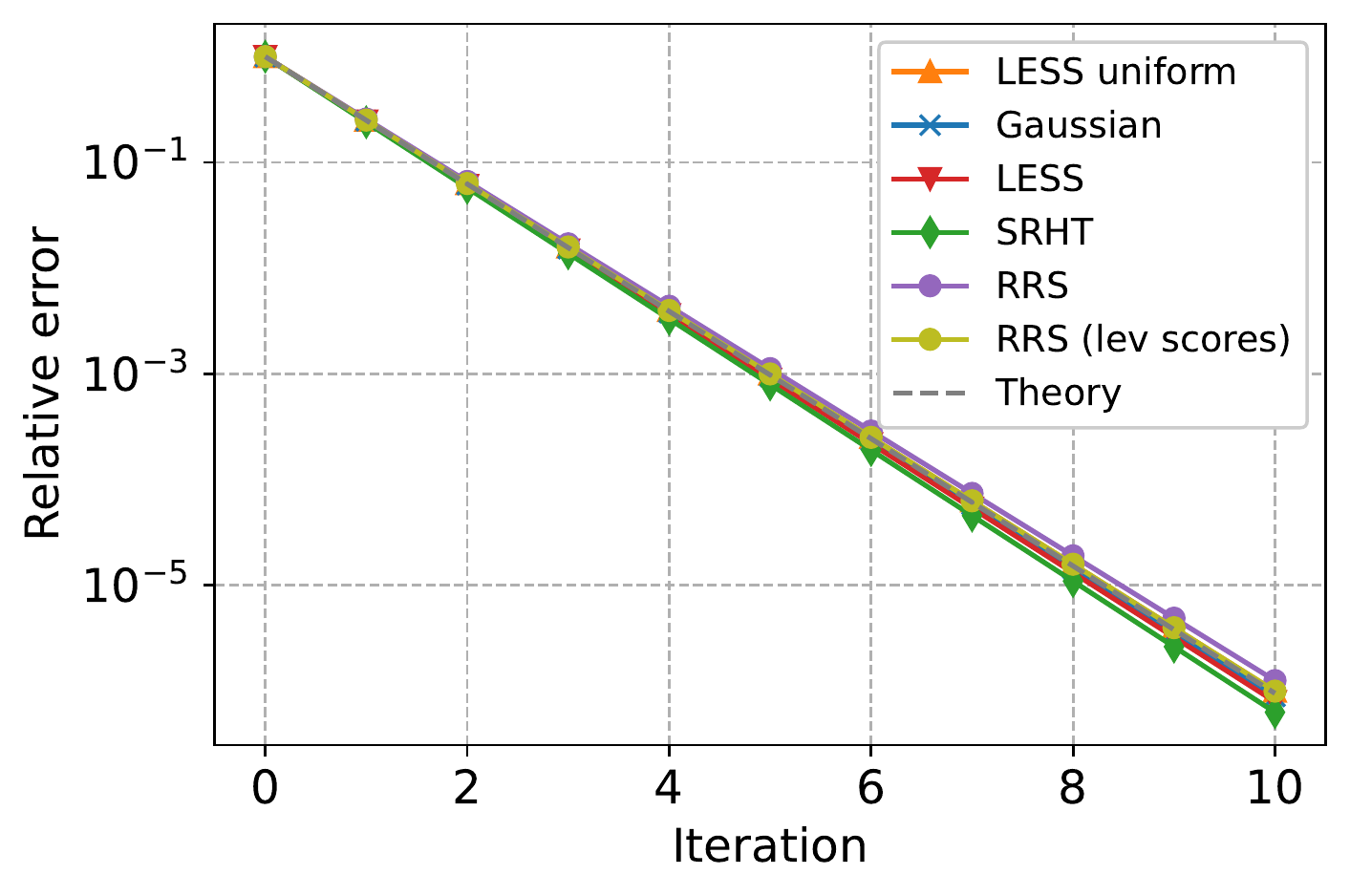}
		\includegraphics[width=.4\linewidth]{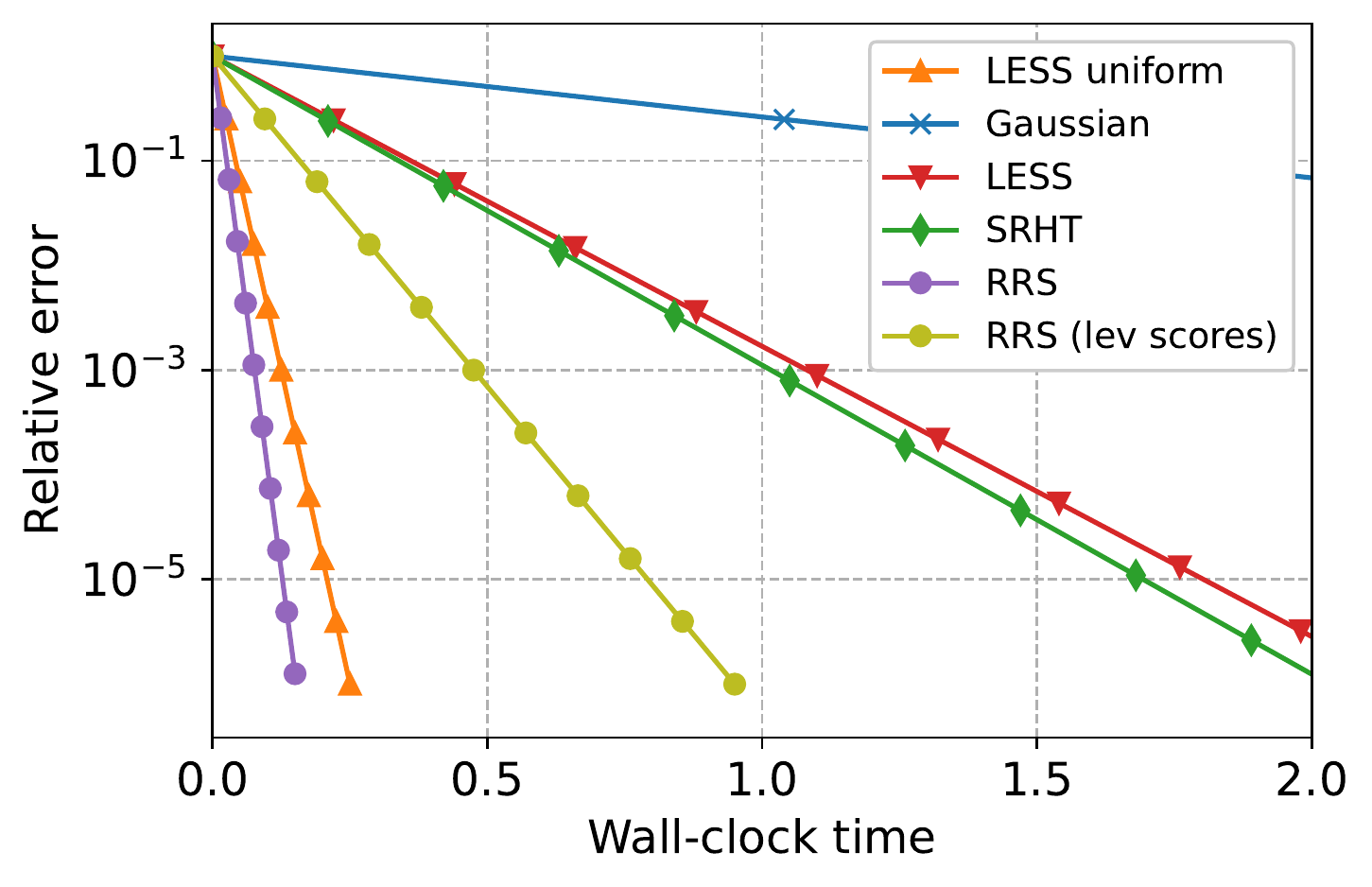}
                \vspace{-5mm}
                \caption*{\footnotesize{(c) CIFAR-10 dataset}}
	\end{minipage}
        \vspace{2mm}
        
	\begin{minipage}[t]{\textwidth}
		\centering
		\includegraphics[width=.4\linewidth]{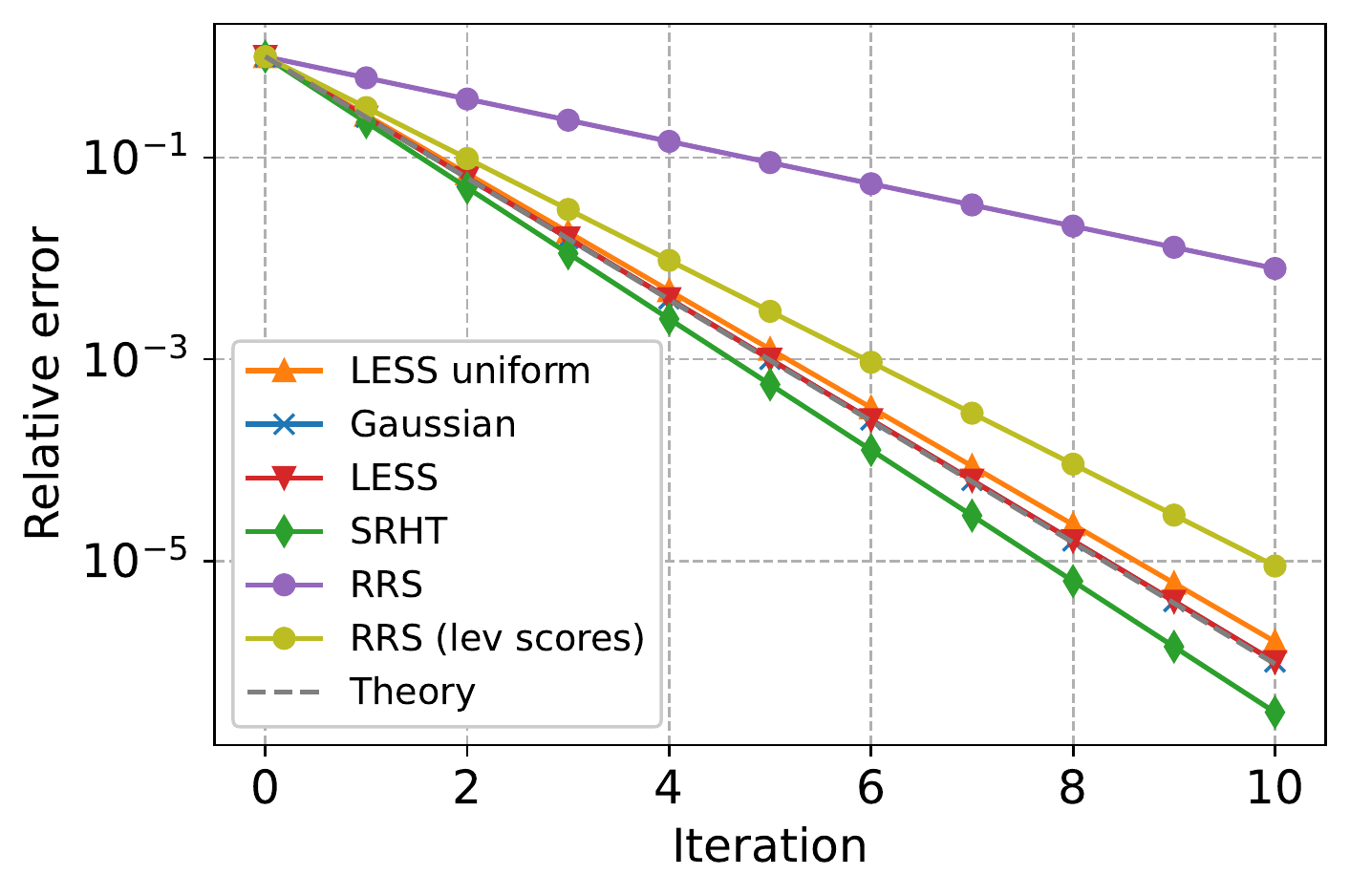}
		\includegraphics[width=.4\linewidth]{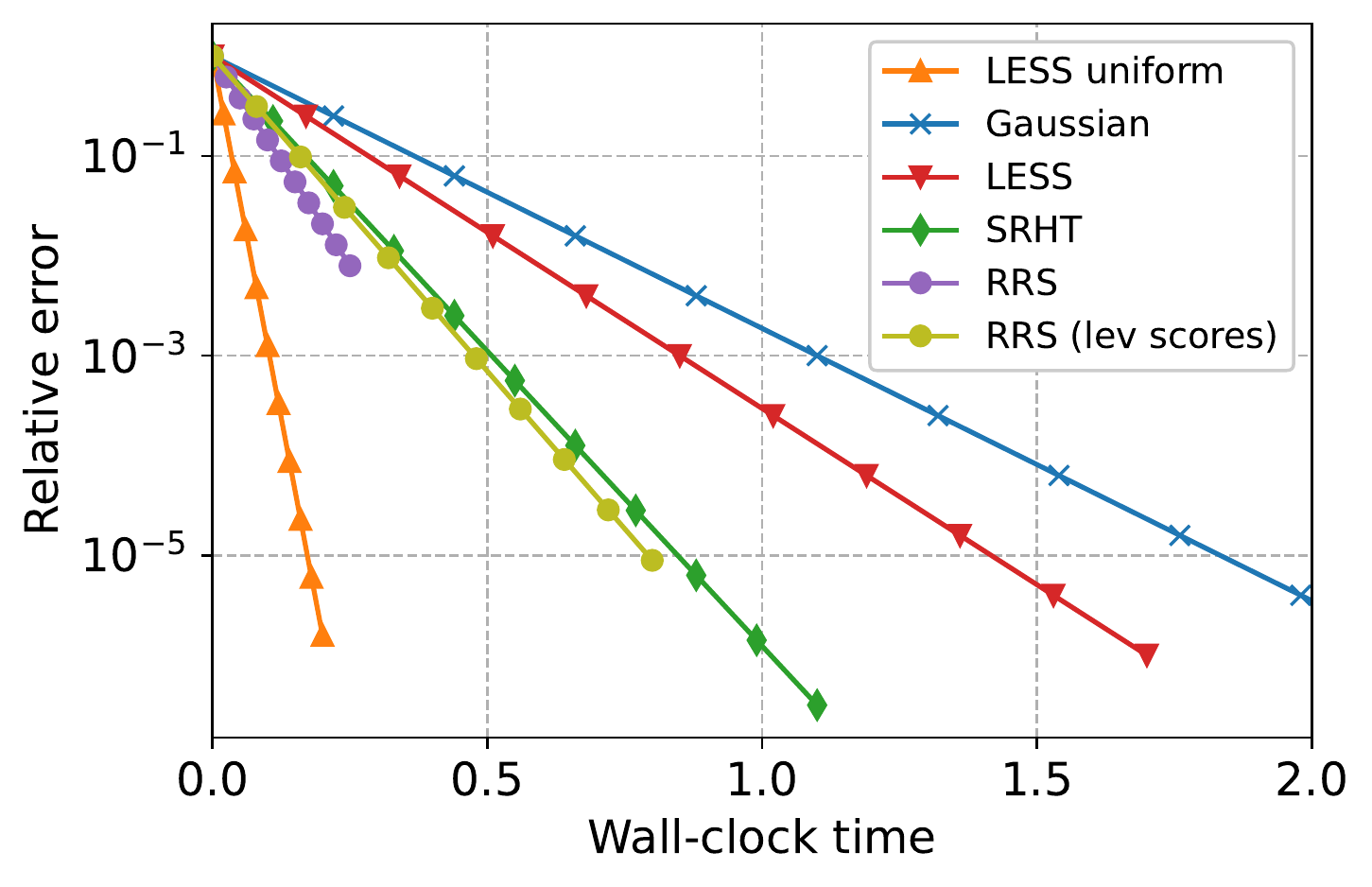}
                \vspace{-5mm}
                \caption*{\footnotesize{(d) Musk dataset}}
	\end{minipage}
        \caption{Newton sketch for least squares regression. We use the sketch size $m=4d$ for all experiments. Results are averaged over $10$ trials.}
        \label{fig:ls-all}
\end{figure}

\vspace{-3mm}
\subsection{Regularized least squares and effective dimension}
\label{a:regularizedls}

On Figure \ref{fig:regularized-ls}, we report the error versus number of iterations of the Newton Sketch for regularized least squares regression. These results illustrate in particular our theoretical predictions: the convergence rate of Newton-LESS is upper bounded by $d_\text{eff}/m$. In fact, Newton-LESS has the same convergence rate as the Newton Sketch with dense Gaussian embeddings.

\begin{figure}[!ht]
	\centering
	\begin{minipage}[t]{0.4\textwidth}
		\centering
		\includegraphics[width=\linewidth]{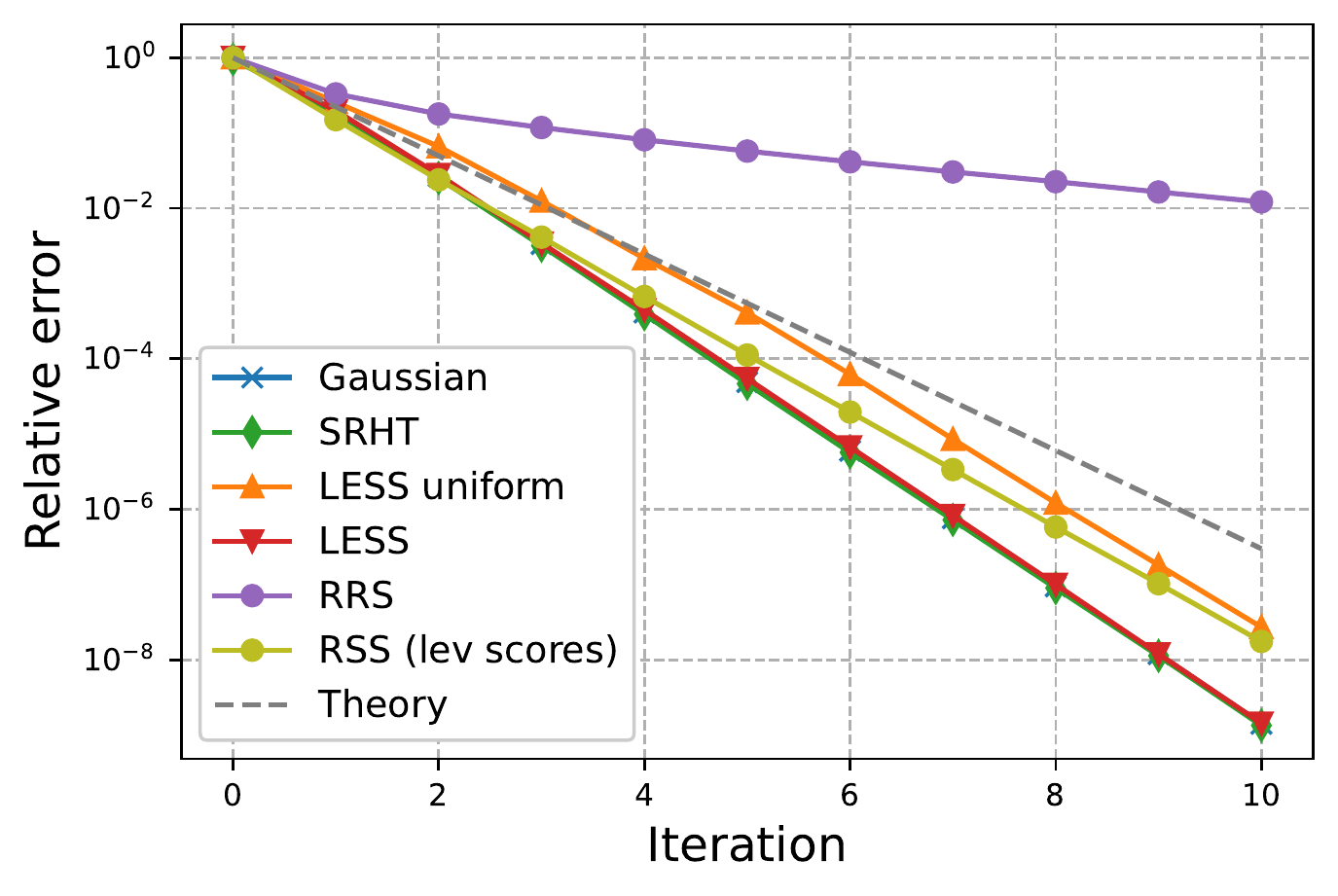}
                \vspace{-7mm}
		\caption*{\footnotesize{(a)  High-coherence synthetic matrix }}
	\end{minipage}
	\begin{minipage}[t]{0.4\textwidth}
		\centering
		\includegraphics[width=\linewidth]{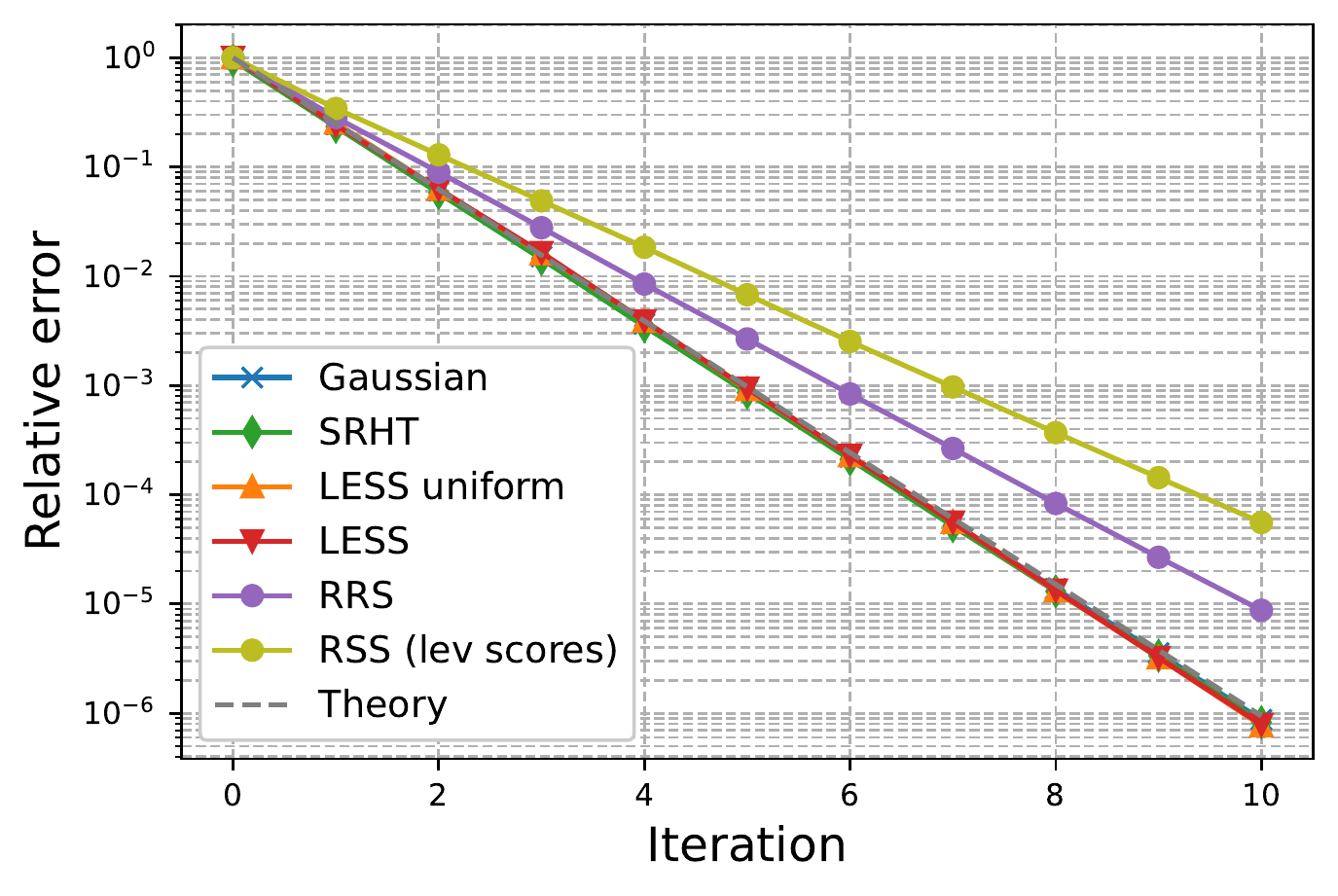}
                \vspace{-7mm}
		\caption*{\footnotesize{(b) CIFAR-10 dataset }}
	\end{minipage}
        \vspace{5mm}
        
	\begin{minipage}[t]{0.4\textwidth}
		\centering
		\includegraphics[width=\linewidth]{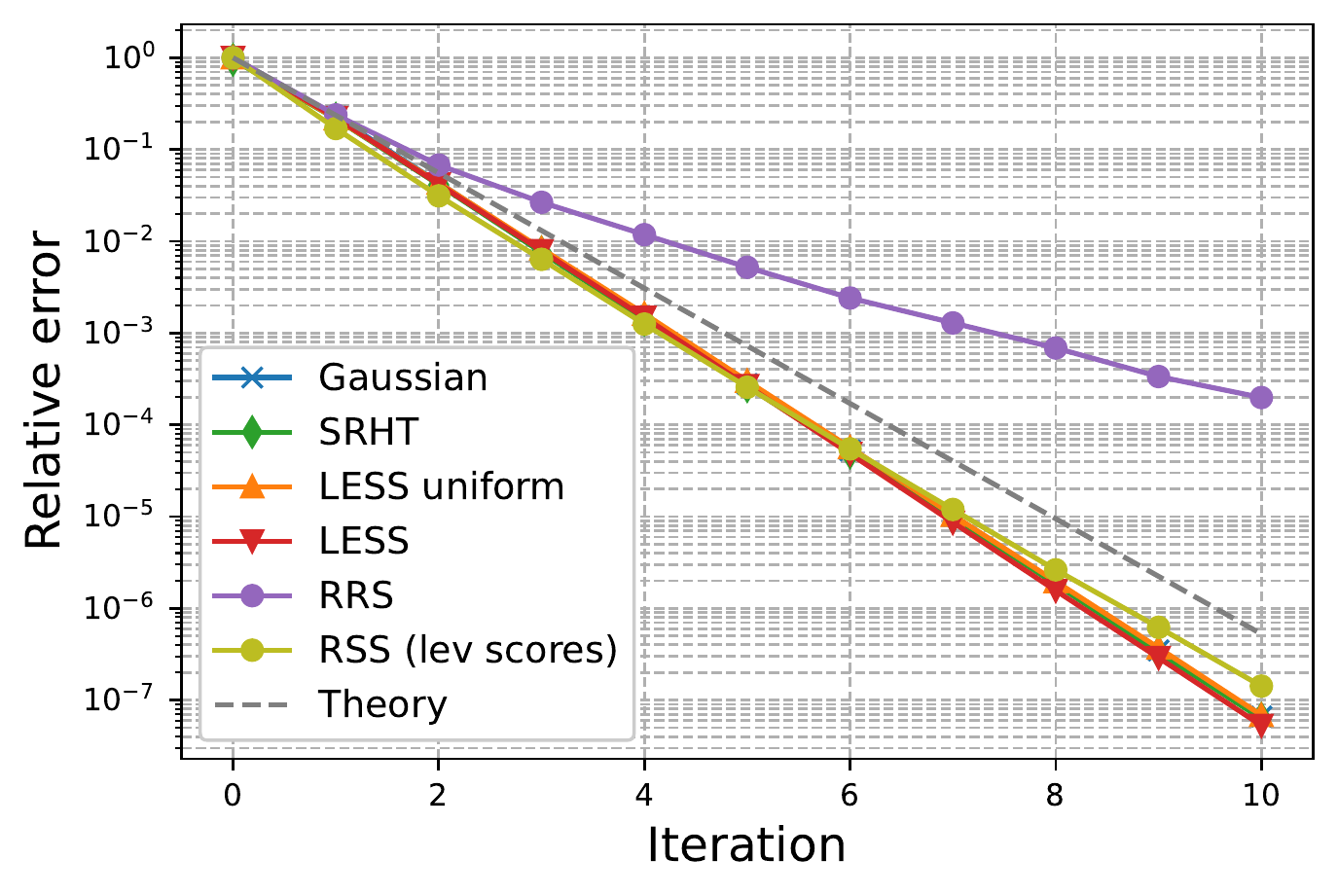}
                \vspace{-7mm}
		\caption*{\footnotesize{(c)  Musk dataset }}
	\end{minipage}
	\begin{minipage}[t]{0.4\textwidth}
		\centering
		\includegraphics[width=\linewidth]{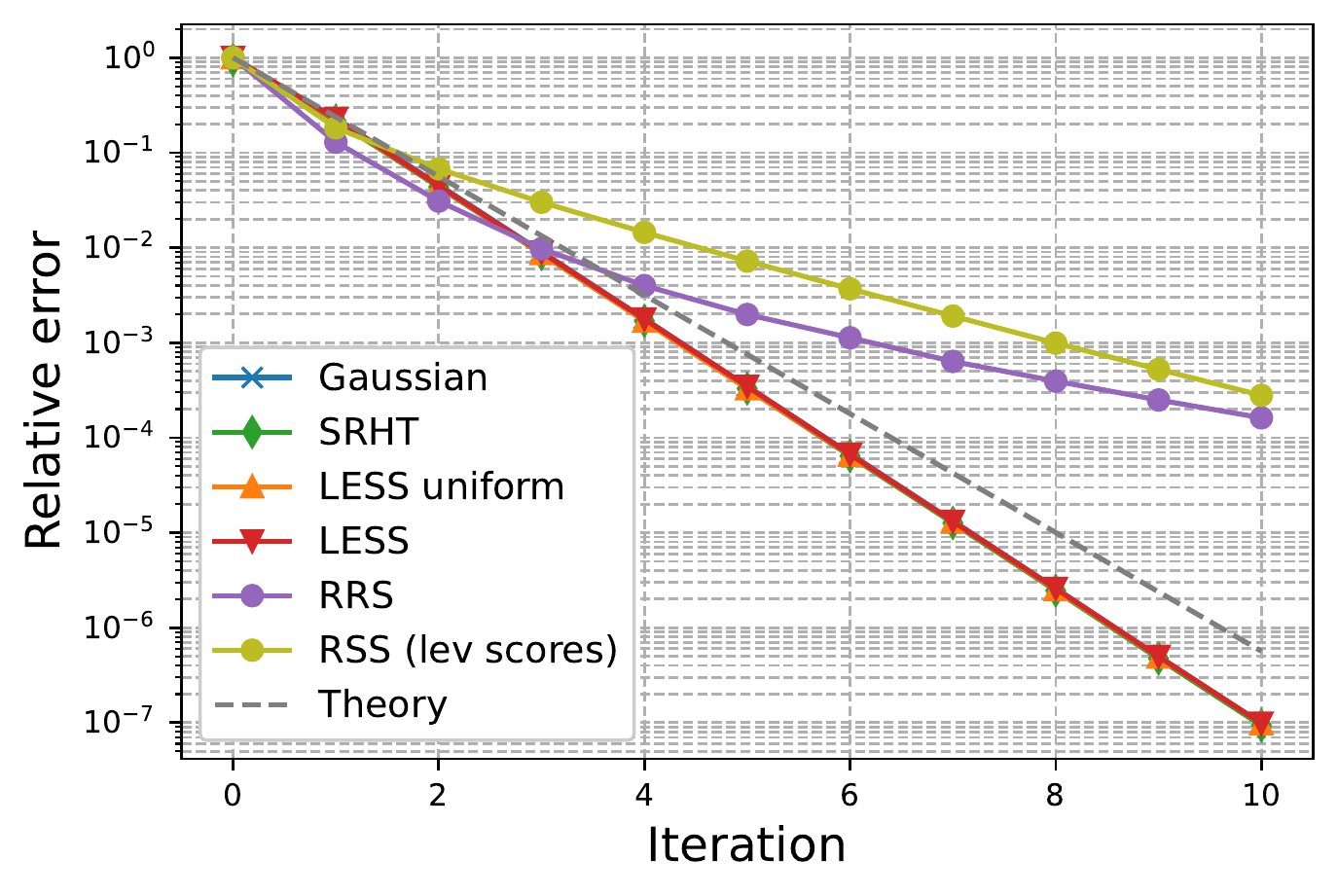}
                \vspace{-7mm}
		\caption*{\footnotesize{(d)  WESAD dataset }}
	\end{minipage}
	\caption{Newton Sketch for regularized least squares regression. We use the sketch size $m=4 d_\text{eff}$ for all experiments. Results are averaged over $10$ trials.}
	\label{fig:regularized-ls}
\end{figure}

\end{document}